\numberwithin{equation}{section}
\newtheorem{definition}{Definition}[section]
\newtheorem{theorem}{Theorem}[section]
\newtheorem{proposition}{Proposition}[section]
\newtheorem{lemma}{Lemma}[section]
\newtheorem{remark}{Remark}[section]
\begin{document}

\title{Critical Behaviour of the Partner Model}


\author{Eric Foxall}


\maketitle

\begin{abstract}
We consider a stochastic model of infection spread incorporating monogamous partnership dynamics.  In \cite{socon} a basic reproduction number $R_0$ is defined with the property that if $R_0<1$ the infection dies out within $O(\log N)$ units of time, while if $R_0>1$ the infection survives for at least $e^{\gamma N}$ units of time, for some $\gamma>0$.  Here we consider the critical case $R_0=1$ and show that the infection dies out within $O(\sqrt{N})$ units of time, and moreover that this estimate is sharp.
\end{abstract}

\noindent\textbf{Keywords: }SIS model, Contact process, Interacting Particle Systems\\
\textbf{MSC 2010: }60J25, 92B99\\

\section{Introduction}
The contact process is a well studied stochastic model of infection spread, in which an undirected graph $G=(V,E)$ determines a collection of sites $V$ and edges $E$ which we can think of as individuals and as links between individuals along which the infection can be transmitted.  Each site is either healthy or infectious; infectious sites recover at a certain fixed rate which is usually normalized to $1$, and transmit the infection to each of their neighbours at rate $\lambda$.\\

The contact process has been studied in a variety of different settings, including lattices \cite{speed},\cite{crit},\cite{ips},\cite{sis} (to cite just a few), infinite trees \cite{trees}, power law graphs \cite{plg}, and complete graphs \cite{comp}.  In each case there is a critical value $\lambda_c$ below which the infection quickly vanishes from the graph, and above which the infection has a positive probability of surviving either for all time (if the graph is infinite), or for an amount of time that grows quickly (either exponentially or at least faster than polynomially) with the size of the graph; in the power law case $\lambda_c=0$ so long-time survival is possible whenever $\lambda>0$.\\

In the paper \cite{socon} we introduce a variation of the contact process on the complete graph in which the edges open and close dynamically, modelling the formation and breakup of monogamous partnerships.  In this case the edges $E$ represent \emph{possible} connections and we have a process $\{E_t:t\geq 0\}$ with $E_t\subseteq E$ for each $t\geq 0$, that describes the set of open edges as a function of time.  We call this the \emph{partner model}, and it is defined as follows.\\

There are $N$ individuals, that we picture as vertices on the complete graph $K_N = (V,E)$, \& we denote the set of infectious vertices at time $t$ by $V_t$.  Transmission and recovery are possible, as well as re-infection.  At any moment in time only a subset of the edges are open for transmission, \& the open edges are denoted $E_t$, so the process is $\{(V_t,E_t):t\geq 0\}$.  The transitions are as follows.
\begin{itemize}
\item For each $x \in V_t$, $V_t \rightarrow V_t \setminus \{x\}$ at rate $1$.
\item For each $xy \in E_t$ such that $\{x,y\}\cap V_t = y$, $V_t \rightarrow V_t \cup \{x\}$ at rate $\lambda$.
\item For each $xy \in E$ such that $xz \notin E_t$ and $yz \notin E_t$ for all $z \in V$, $E_t\rightarrow E_t \cup \{xy\}$ at rate $r_+/N$.
\item For each $xy \in E_t$, $E_t \rightarrow E_t \setminus \{xy\}$ at rate $r_-$.
\end{itemize}
Each infectious individual becomes healthy at rate $1$, and along each open edge, an infectious individual infects a healthy individual at rate $\lambda$.  If $x$ and $y$ have no partners, they form a partnership at rate $r_+/N$, and if $xy$ are partnered they break up at rate $r_-$.  The normalization $r_+/N$ is so that each individual finds a partner at a bounded rate.  We can construct the process as a continuous time Markov chain as in \cite{norris}, or else using a graphical construction as described in \cite{socon}.\\

Letting $S_t$ and $I_t$ denote the total number of single (i.e. unpartnered) healthy and infectious individuals respectively, and $SS_t,SI_t,II_t$ the number of partnered pairs of the three possible types, as noted in \cite{socon}, $(S_t,I_t,SS_t,SI_t,II_t)$ is a continuous time Markov chain, whose transition rates can be easily written down.  Defining $s_t=S_t/N$, $i_t = I_t/N$, $ss_t=SS_t/N$, $si_t=SI_t/N$, $ii_t=II_t/N$, $(s_t,i_t,ss_t,si_t,ii_t)$ is a Markov chain as well, and sometimes more convenient to work with.\\

Defining $Y_t=S_t+I_t$ and $y_t=s_t+i_t=Y_t/N$, the proportion of singles, as shown in \cite{socon}, $y_t$ approaches and remains close to a stationary value $y^*$ which is the unique equilibrium in $(0,1)$ for the ODE
\begin{equation*}
y' = r_-(1-y) + r_+y^2
\end{equation*}
To decide whether the infection can spread, as shown in \cite{socon} it suffices to consider the effect of a single infectious individual in an otherwise healthy population, and to track it until the first moment when it either
\begin{itemize}
\item recovers without finding a partner, or
\item if it finds a partner before recovering, breaks up from that partnership.
\end{itemize}

\begin{figure}
\begin{center}
\input{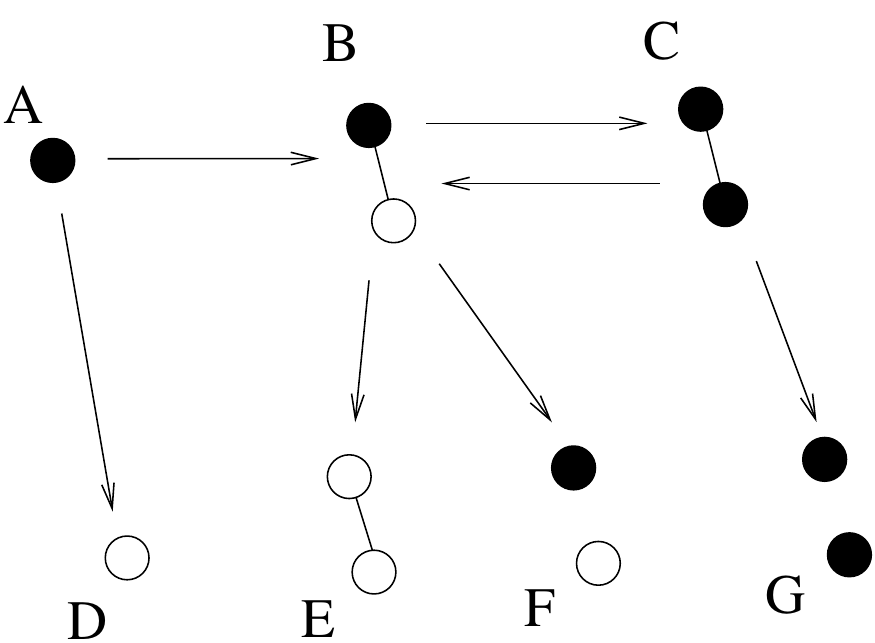tex_t}
\end{center}
\caption{Markov Chain used to compute $R_0$, with transition rates indicated; shaded sites are infectious}
\label{figri}
\end{figure}

Assuming $y \approx y^*$, this leads to the continuous time Markov chain $(X_t)_{t\geq 0}$ whose transition rates are as shown in Figure \ref{figri}.  Let $\tau = \inf\{t:X_t \in \{D,E,F,G\}\}<\infty$ and define the \emph{basic reproduction number}
\begin{equation}\label{eqri}
R_0 = \mathbb{P}(X_{\tau}=F \mid X_0= A) + 2\mathbb{P}(X_{\tau} = G \mid X_0 = A)
\end{equation}
which is the expected number of infectious singles upon absorption of the above Markov chain, starting from state $A$.  As shown in \cite{socon}, if $R_0<1$ the infection dies out by time $C\log N$, for some $C>0$, with probability tending to $1$ as $N\rightarrow\infty$, while if $R_0>1$ and $|V_0|\geq \epsilon N$ the infection survives up to time $e^{\gamma N}$ with probability $\geq 1-e^{-\gamma N}$ for some $\gamma>0$.  It is shown that if $R_0=1$, then for each $\epsilon>0$, after constant time depending on $\epsilon$, $|V_t|\leq\epsilon N$, however the actual extinction time when $R_0=1$ is not investigated.  Here we prove the extinction time is of order $\sqrt{N}$; the following is the main result.
\begin{theorem}
If $R_0=1$, then
\begin{itemize}
\item there are $C,\gamma>0$ so that for any $(V_0,E_0)$, with probability $\geq 1-e^{-\gamma m}$, $|V_{mC\sqrt{N}}|=0$, and
\item if $|V_0|\geq \sqrt{N}$ and $y_0\geq y^* - \log N/\sqrt{N}$, there is $c>0$ so that $|V_{c\sqrt{N}}| \neq 0$ with probability $\geq 1-e^{-c(\log N)^2}$.
\end{itemize}
\end{theorem}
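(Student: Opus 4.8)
The plan is to reduce both halves of the theorem to the analysis of a single scalar quantity playing the role of the total infected mass, which in the relevant window behaves like a critical continuous-time branching process with a logistic damping. Write $n_t=|V_t|=I_t+SI_t+2II_t$. By the fluid estimates of \cite{socon}, for any initial configuration there is a constant time after which $y_t$ lies within a window $|y_t-y^*|\le N^{-1/2}\log N$ of $y^*$ and stays there throughout any polynomial stretch of time, outside an event of probability $e^{-c_0(\log N)^2}$. On this event one linearises the $(I,SI,II)$-block of the generator about the disease-free equilibrium; since $R_0=1$ this linearisation has a simple zero eigenvalue with the remaining spectrum in the open left half-plane, so there is a positive left null vector $u$ for which $Z_t:=u\cdot(I_t,SI_t,II_t)$ is comparable to $n_t$ (and $\{Z_t=0\}=\{n_t=0\}$) and has no linear drift. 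Slaving the two fast coordinates to $Z_t$, averaging the drift contribution of the $y_t$-fluctuations, and keeping the leading nonlinear term -- which comes from infectious individuals pairing with already infectious partners and is strictly ``wasteful'' for the epidemic -- gives, uniformly for $1\le Z_t\ll N$,
\[
\mathbb{E}\big[\mathrm{d}Z_t\mid\mathcal{F}_t\big]=-\beta\,\frac{Z_t^{2}}{N}\,(1+o(1))\,\mathrm{d}t,\qquad \mathrm{d}\langle Z\rangle_t\asymp Z_t\,\mathrm{d}t,\qquad\beta>0,
\]
with $O(1)$ jumps. Making this reduction precise, with the error terms and exceptional probabilities controlled uniformly over $t\le C\sqrt N$ -- together with the super-polynomial probability bound needed in the second part -- is the bulk of the work and the main obstacle; it is a lengthy but essentially routine combination of the estimates of \cite{socon} with exponential martingale inequalities.

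\noindent\textbf{Upper bound.} By the constant-time estimate recalled above, for any $(V_0,E_0)$ one has $n_{t_0}\le\epsilon N$ with probability $\ge\tfrac12$ for $N$ large, so it suffices to bound the extinction time from $n_0\le\epsilon N$ and pay a constant factor. The descent splits at scale $\sqrt N$. For $Z_t\in[K\sqrt N,\epsilon N]$ the logistic drift dominates: $\varphi(Z):=N/Z$ has $\mathcal L\varphi\ge\beta/2$ there once $K$ is large and $\epsilon$ small, and $\varphi(Z_t)$ is then a submartingale with drift bounded below, bounded jumps and controlled quadratic variation, so a Doob/Freedman argument gives $Z_t\le K\sqrt N$ for some $t\le C_1\sqrt N$ with probability $\ge 1-e^{-\gamma_1}$ (with $\gamma_1$ as large as we like, by enlarging $K$). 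For $Z_t\in(0,K\sqrt N]$ the variance-stabilised function $\sqrt{Z_t}$ has, after the $y$-averaging, drift $\le -c_1/\sqrt{Z_t}<0$, quadratic-variation rate bounded above and below by positive constants, and $O(1)$ jumps; optional stopping at $\tau_0\wedge C_2\sqrt N$ (with $\tau_0$ the extinction time) together with a maximal inequality gives $\mathbb{P}(\tau_0\le C_2\sqrt N)\ge p_0>0$, uniformly over starting states with $Z\le K\sqrt N$ and over large $N$ -- the finitely many small $N$ being handled by the fact that $\{n=0\}$ is a reachable absorbing set whose hitting time has an exponential tail. Concatenating the two phases, $\mathbb{P}(n_{C\sqrt N}=0)\ge q_0>0$ from every initial state, and since $0$ is absorbing, the Markov property applied at the times $jC\sqrt N$ yields $\mathbb{P}(n_{mC\sqrt N}\ne0)\le(1-q_0)^m=e^{-\gamma m}$.

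\noindent\textbf{Lower bound.} Assume $n_0\ge\sqrt N$ and $y_0\ge y^*-\log N/\sqrt N$; the only route to $n_{c\sqrt N}=0$ is for $n_t$ -- equivalently $W_t:=\sqrt{Z_t}$ -- to descend all the way to $0$, starting from $W_0\ge c_3 N^{1/4}$. Decompose $W_t=W_0+M_t+A_t$, where $A_t$ is the drift: its logistic part is non-increasing, and its only possibly-positive part is the $y_t$-fluctuation contribution, which, thanks to $|y_s-y^*|\le N^{-1/2}\log N$ and the rapid decorrelation of $y_s-y^*$, is controlled over $[0,c\sqrt N]$ (its time integral against $\sqrt{Z_s}$ being far smaller than $W_0$); this is where the $\log N/\sqrt N$ slack in the hypothesis on $y_0$ enters. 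The martingale $M_t$ has $O(1)$ jumps and, because $W_t$ is variance-stabilised, $\langle M\rangle_t\le\kappa\,c\sqrt N$ for $t\le c\sqrt N$. A descent of $W$ all the way to $0$ over this window therefore forces $M_t+A_t$ to make a large excursion, and applying Freedman's inequality to $M_t$, while absorbing the controlled contribution of $A_t$, bounds the probability of this by $e^{-c'(\log N)^2}$ once $c$ is taken small (on the complementary event, in which $Z_t$ instead rises above $K\sqrt N$, extinction by $c\sqrt N$ is even less likely, by the descent bound of the previous paragraph). Taking $c$ also small enough that $c'\ge c$ gives $\mathbb{P}(n_{c\sqrt N}\ne0)\ge1-e^{-c(\log N)^2}$, as required; as noted, extracting this super-polynomial -- rather than merely positive -- lower bound on survival, which leans on the variance-stabilised formulation and on the quantitative hypotheses on $|V_0|$ and $y_0$, is the delicate point.
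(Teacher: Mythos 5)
Your one-dimensional reduction via a positive left null vector $u$ of the linearised $(I,SI,II)$-block is a legitimate alternative to the paper's device (the ``anticipated'' variables $SIA_t,IIA_t$ and $TI_t=I_t+SIA_t+2IIA_t$, for which the vanishing of the linear drift at $R_0=1$ is read off directly from the $R_0$ Markov chain), and the Lyapunov descent ($N/Z$ above scale $\sqrt N$, $\sqrt{Z}$ below) is a plausible substitute for the paper's dyadic halving and gambler's-ruin estimates. However, the displayed drift formula $\mathbb{E}[\mathrm{d}Z_t\mid\mathcal{F}_t]=-\beta Z_t^2N^{-1}(1+o(1))\,\mathrm{d}t$, asserted uniformly for $1\le Z_t\ll N$, is false as an instantaneous statement: the $y$-dependence of the pairing rate contributes a term of order $\delta y_t\,Z_t$ to the drift, and on the event $|\delta y_t|\le \log N/\sqrt N$ this can be as large as $+cZ_t\log N/\sqrt N$, which dominates $Z_t^2/N$ whenever $Z_t\lesssim \sqrt N\log N$ --- precisely the regime of your second descent phase and of the entire lower bound. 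What rescues the argument is not the pointwise bound on $\delta y_t$ but the time-averaged one, $\int_0^t|\delta y_s|\,ds\le Ct/\sqrt N$ with good probability (so that the fluctuation term contributes only $O(Z)$ per unit time rather than $O(Z\log N/\sqrt N)$). This bound is not in \cite{socon} and is not a ``routine combination of the estimates of \cite{socon} with exponential martingale inequalities'': it is the content of Section 3 of the paper (an excursion decomposition of $\delta Y_t$ over the lattice $\{k\sqrt N\}$ via the auxiliary chains $K_n$, Lemmas 3.2--3.7), and your sketch offers no substitute for it. This is the central missing ingredient.

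Second, your lower bound does not produce the stated exponent. With $W_0\ge c_3N^{1/4}$ and $\langle M\rangle_{c\sqrt N}\le\kappa c\sqrt N$, Freedman's inequality bounds the probability of the required excursion by $\exp\bigl(-W_0^2/(2\kappa c\sqrt N+O(N^{1/4}))\bigr)=\exp\bigl(-c_3^2/(2\kappa c)(1+o(1))\bigr)$, a constant in $N$, not $e^{-c'(\log N)^2}$: the starting height and the accumulated standard deviation over $[0,c\sqrt N]$ are both of order $N^{1/4}$, so no choice of $c$ makes their ratio grow with $N$, and the variance-stabilised coordinate does not help since $W=\sqrt Z$ merely reparametrises the same event. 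So as written your argument yields only survival with probability bounded away from zero, which is strictly weaker than the theorem's second bullet. The paper's Proposition 4.7 does different bookkeeping --- it redefines the principal part with $\max(y,y^*)$ so that the auxiliary part only removes mass, and then bounds the maximal decrease of each part separately against $TI_0/3$ over the window $[0,c\sqrt N]$ --- and any repair of your approach would have to identify where a $(\log N)^2$ (rather than $O(1)$) exponent can actually be extracted.
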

To get some intuition for this result we suppose $y\approx y^*$, and ignore constants of proportionality in the rates.  Since $R_0=1$, when $i\approx 0$ there is no net drift in the value of $i$, simply a random walk taking steps of size $1/N$ at rate about $Ni$.  For larger values of $i$, the $I+I\rightarrow II$ interactions cause a decrease of about $i$ in the number of new infections produced per partnership, so that we have approximately
$$\frac{d}{dt}i = -i^2$$
The time required to move by an amount $\approx \epsilon i$, when the proportion of single infectious is $i$, is according to the neutral term about $((\epsilon Ni)^2/(Ni) = \epsilon^2 Ni$ and according to the drift term is about $\epsilon i/(i^2) = \epsilon/i$, and the minimum of these two values is maximized when the two are equal, at $i \approx 1/\sqrt{\epsilon N}$, with common value $\epsilon^{3/2}\sqrt{N}$.  This suggests the correct time scale for extinction is $\sqrt{N}$.  To make this rigorous, of course, requires some effort.\\

The proof is laid out as follows.  In Section \ref{secprop}, culminating in Lemma \ref{deltay}, we control the maximum value and the accumulated fluctuations of $y_t-y^*$, which is essential since $y-y^*$ together with $i$ control the rate of spread of the infection at any moment in time.  In Section \ref{secext} we prove the upper bound in four parts.  In each case, we show the probability of the desired event is bounded away from $0$ uniformly in $N$ and in the particular choice of configuration.
\begin{itemize}
\item In Proposition \ref{critsmall} we show the time to go from $N^{\gamma}$ to $0$ infectious is $O(N^{2\gamma})$ for fixed $0<\gamma<1/4$.  The correct order is probably $O(N^{\gamma})$, but $N^{2\gamma}$ suffices here.
\item In Proposition \ref{critbig} we show that for some $C>0$ the time it takes to reach $\leq C\sqrt{N}$ infectious is at most $C\sqrt{N}$.
\item In Proposition \ref{crittopmid} we show that for some $c,C'>0$ the time it takes to reach $\leq c \sqrt{N}$ from $C\sqrt{N}$ infectious is at most $C' \sqrt{N}$.
\item In Proposition \ref{critmid} we show the time it takes to reach $\leq N^{\gamma}$ from $\leq \sqrt{N}$ infectious, for fixed $0<\gamma<1/4$, is $O(\sqrt{N})$.
\end{itemize}
These are combined in Proposition \ref{ub} to prove the upper bound.  The proof of the lower bound is simpler and is given in Proposition \ref{lb}.  The proof of both bounds relies on a useful modification to the definition of the variables $SS,SI,II$, that is given at the start of Section \ref{secext}.

\section{Definitions and Preliminaries}
We begin with a couple of definitions that help us describe the likelihood of important events, and the intervals of time over which they hold.
\begin{definition}\label{defwhp}
An event $A$ holds
\begin{itemize}
\item \emph{with high probability} or whp in $n$ if $\mathbb{P}(A)\geq 1 - e^{-\gamma n}$ for some $\gamma>0$ and $n$ large enough, and
\item \emph{for a very long time in $n$ after $T(n)$} if it holds for $T(n)\leq t \leq e^{\gamma n}$ for some $\gamma>0$ and $n$ large enough.
\end{itemize}
Also, $A$ holds
\begin{itemize}
\item \emph{with good probability} in $n$ if it $\mathbb{P}(A) \geq 1 - e^{-\gamma (\log n)^2}$ for some $\gamma>0$ and $n$ large enough, and
\item \emph{for a long time in $n$ after $T(n)$} if it holds for $T(n) \leq t \leq e^{\gamma (\log n)^2}$ for some $\gamma>0$ and $n$ large enough.
\end{itemize}
In either case, if no ``after $T(n)$'' is mentioned, then $T(n) \equiv 0$.
\end{definition}
Note that both high probability and good probability are preserved under finite intersections.  Also, since $e^{-c(\log n)^2} = n^{-c\log n}$ and $\log n$ is increasing and unbounded, if an event holds with good probability then for any $\alpha>0$ and $n$ large enough, it holds with probability $\geq 1-n^{-\alpha}$.  By the same token, for any $\alpha>0$ and $n$ large enough, ``for a long time in $n$ after $T(n)$'' implies for $T(n) \leq t \leq n^{\alpha}$.\\

\begin{remark}\label{timehorizon}
Since in this paper we are only interested in the behaviour of the process up to the time horizon $O(\sqrt{N})$, if an event holds for a long time (or a very long time) in $N$ then for our sake it holds indefinitely.\\

Moreover, since we want to show only that certain events hold with probability bounded away from $0$ uniformly in $N$, if an event holds with good probability (or high probability) in $N$ -- or indeed with probability $1-o(1)$ as $N\rightarrow\infty$ -- then there is no need to keep track of its probability.
\end{remark}

The following basic large deviations estimate is proved in \cite{socon} and is useful throughout.
\begin{lemma}\label{chern}
Let $X$ be Poisson distributed with mean $\mu$, then
\begin{eqnarray*}
\mathbb{P}(X>(1+\delta) \mu) &\leq & e^{-\delta^2\mu/4} \textrm{ for } 0<\delta\leq 1/2,\\
\mathbb{P}(X<(1-\delta)\mu) &\leq & e^{-\delta^2\mu/2} \textrm{ for }\delta>0\\
\end{eqnarray*}
\end{lemma}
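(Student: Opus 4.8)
\section*{Proof proposal for Lemma \ref{chern}}

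The plan is to run the standard Chernoff (exponential Markov) argument, using the explicit moment generating function of a Poisson variable: if $X$ is Poisson with mean $\mu$ then $\mathbb{E}[e^{\theta X}] = \exp(\mu(e^\theta - 1))$ for every real $\theta$. Everything then reduces to optimizing a one‑parameter exponential bound and verifying two elementary scalar inequalities.

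For the upper tail, fix $0 < \delta \le 1/2$ and any $\theta > 0$. Applying Markov's inequality to $e^{\theta X}$,
\[
\mathbb{P}(X > (1+\delta)\mu) \le e^{-\theta(1+\delta)\mu}\,\mathbb{E}[e^{\theta X}] = \exp\big(\mu(e^\theta - 1 - \theta(1+\delta))\big).
\]
The exponent is minimized over $\theta>0$ at $\theta = \log(1+\delta)>0$, which gives the bound $\exp(-\mu\,h(\delta))$ with $h(\delta) := (1+\delta)\log(1+\delta) - \delta$. It then suffices to check $h(\delta) \ge \delta^2/4$ for $\delta \in [0,1/2]$. For the lower tail, first dispose of the trivial range: if $\delta \ge 1$ then $(1-\delta)\mu \le 0$, so $\mathbb{P}(X < (1-\delta)\mu) = 0$ and the claim holds. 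So assume $0 < \delta < 1$; for $\theta>0$, Markov applied to $e^{-\theta X}$ gives
\[
\mathbb{P}(X < (1-\delta)\mu) \le e^{\theta(1-\delta)\mu}\,\mathbb{E}[e^{-\theta X}] = \exp\big(\mu(e^{-\theta} - 1 + \theta(1-\delta))\big),
\]
whose exponent is minimized at $\theta = -\log(1-\delta) > 0$, yielding $\exp(-\mu\,k(\delta))$ with $k(\delta) := \delta + (1-\delta)\log(1-\delta)$. It remains to show $k(\delta) \ge \delta^2/2$ on $[0,1)$.

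The only real work is in the two scalar inequalities, and both follow from bounding a second derivative. For $h$: $h(0)=0$, $h'(\delta) = \log(1+\delta)$ with $h'(0)=0$, and $h''(\delta) = 1/(1+\delta) \ge 2/3$ for $\delta \in [0,1/2]$; integrating the bound $h'' \ge 2/3$ twice gives $h(\delta) \ge \tfrac13\delta^2 \ge \tfrac14\delta^2$. For $k$: $k(0)=0$, $k'(\delta) = -\log(1-\delta)$ with $k'(0)=0$, and $k''(\delta) = 1/(1-\delta) \ge 1$ on $[0,1)$; integrating twice gives $k(\delta) \ge \tfrac12\delta^2$, as needed.

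There is no genuine obstacle here; the argument is entirely standard. The main point requiring care is matching the regime restrictions to the claimed constants: the cutoff $\delta \le 1/2$ in the upper tail is exactly what keeps $h''$ bounded below by a positive constant, so that $h(\delta)$ is comparable to $\delta^2$ with an absolute constant (without it one only gets a $\delta^2/(2 + c\delta)$‑type bound), and the lower tail needs the separate trivial treatment of $\delta \ge 1$ since there $(1-\delta)\mu$ is non‑positive.
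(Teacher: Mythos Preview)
Your proof is correct and is the standard Chernoff/exponential Markov argument. Note that the paper itself does not prove this lemma but simply cites it from \cite{socon}; your argument is essentially the same one that reference uses, so there is nothing to compare.
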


The following result, proved in \cite{socon}, is the starting point for our investigations.
\begin{lemma}\label{start}
For each $\epsilon>0$, there is $T>0$ so that with high probability in $N$ for a very long time after $T$, $|V_t|\leq \epsilon N$ and $|\delta y_t|\leq \epsilon$.
\end{lemma}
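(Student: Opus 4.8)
The plan is to establish the two bullets of Lemma~\ref{start} separately, using Definition~\ref{defwhp} and Remark~\ref{timehorizon} freely.

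\textbf{Controlling $\delta y_t$.} The observation that makes this tractable is that $Y_t = S_t + I_t$, the number of singles, is a Markov chain on its own: recovery and transmission leave $Y_t$ fixed, while the two partnership transitions change $Y_t$ by $-2$ at rate $\tfrac{r_+}{2N}Y_t(Y_t-1)$ and by $+2$ at rate $\tfrac{r_-}{2}(N-Y_t)$, both functions of $Y_t$ alone. So $y_t = Y_t/N$ is a density-dependent jump process with $O(1/N)$ jumps at rate $O(N)$. I would first apply the functional law of large numbers for such processes (or write $Y_t$ via two independent Poisson clocks and use Lemma~\ref{chern} on the clock counts) to obtain, for any fixed $T_0$, that $\sup_{t\le T_0}|y_t-\bar y_t|\le\epsilon/4$ whp, with $\bar y$ the solution started at $y_0$ of the limiting ODE for $y$. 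Since that ODE is one-dimensional with a drift of definite sign on each side of $y^*$ throughout $[0,1]$, the point $y^*$ is globally attracting and, by compactness of $[0,1]$, uniformly so: there is $T=T(\epsilon)$ with $|\bar y_t-y^*|<\epsilon/4$ for all $t\ge T$ and all $\bar y_0\in[0,1]$. Taking $T_0=T$ yields $|\delta y_T|\le\epsilon/2$ whp, uniformly over $(V_0,E_0)$.

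Next I would upgrade ``at time $T$'' to ``for a very long time after $T$''. The net drift of $Y_t$ (up rate minus down rate) is non-positive whenever $y\ge y^*$ and non-negative whenever $y\le y^*$, so to leave $[y^*-\epsilon,y^*+\epsilon]$ starting from the inner band $[y^*-\epsilon/2,y^*+\epsilon/2]$ the chain must move a distance $\ge\epsilon N/2$ during an interval on which the drift points inward; the compensated martingale $Y_t-\int_0^t(\text{drift})\,du$ has $O(1)$ jumps at rate $O(N)$, so a Bernstein-type inequality for it makes such a move within one unit of time have probability $\le e^{-cN}$, $c=c(\epsilon)>0$. A union bound over the $\lceil e^{\gamma N}\rceil$ unit intervals in $[T,e^{\gamma N}]$ with $\gamma<c$, applied on both sides, gives $|\delta y_t|\le\epsilon$ for $T\le t\le e^{\gamma N}$ whp. (Since $Y_t$ is a genuine birth-death chain one could instead read the escape probability off its harmonic function.)

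\textbf{Controlling $|V_t|$.} For the constant-time part one may simply quote from \cite{socon} that, when $R_0=1$, after a constant time $T'=T'(\epsilon)$ one has $|V_t|\le\epsilon N/2$; to reprove it I would run the full rescaled chain $z_t=(s_t,i_t,ss_t,si_t,ii_t)$, which is again density-dependent, apply Kurtz's theorem to get $\sup_{t\le T_0}\|z_t-\bar z_t\|\le\epsilon/4$ whp for any fixed $T_0$, and use that the limiting ODE system has a single disease-free equilibrium $z^\star=(y^*,0,(1-y^*)/2,0,0)$ in the physical region which, when $R_0\le1$, is globally attracting there: a Lyapunov function $L=a\,i+b\,si+c\,ii$, with $a,b,c>0$ read from the left Perron eigenvector of the $2\times2$ next-generation matrix encoded by Figure~\ref{figri}, satisfies $\dot L\le0$ along trajectories with equality only on the disease-free set, so LaSalle's principle plus compactness give $\bar z_t\to z^\star$ uniformly in $\bar z_0$. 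Either way $|V_{T'}|\le\epsilon N/2$ whp. To extend this to a very long time I would repeat the martingale argument above with $L$ in place of $y$: on $\{L\in[\epsilon/2,\epsilon]\}$ the drift of $L$ is negative and bounded away from $0$ --- its linear part is $\le0$ because $R_0\le1$, the quadratic $I+I\to II$ terms make it strictly negative, and the only term that could push outward is $O(\epsilon\,|\delta y|)$, which is negligible once the $\delta y$ bound is applied with $\epsilon$ replaced by a small enough $\epsilon'$ --- while the jumps of $L$ are $O(1/N)$ at rate $O(N)$. A Bernstein/martingale bound plus a union bound over unit intervals up to $e^{\gamma N}$ then keeps $|V_t|\le\epsilon N$, the bound being automatic once $V_t$ reaches $\emptyset$. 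Relabelling $T$ as $\max(T,T')$ finishes.

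\textbf{The main obstacle.} The two fluid limits and the scalar ODE for $y$ are routine. The substantive work is the behaviour \emph{at} $R_0=1$: showing that the five-dimensional limiting ODE still converges, globally in the physical simplex, to the disease-free equilibrium precisely at the bifurcation value, and producing a Lyapunov function whose drift has a definite sign with enough slack to survive passage to the stochastic process with $\delta y\neq0$. Making the chain of small parameters ($\epsilon$, $\epsilon'$, $\gamma$, the various $c$'s) mutually consistent, and keeping every estimate uniform over arbitrary --- possibly degenerate --- initial configurations $(V_0,E_0)$, is where the care is needed.
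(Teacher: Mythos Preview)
The paper does not actually prove this lemma: it simply quotes the result from \cite{socon} and moves on. So there is no ``paper's own proof'' to compare against here; what you have written is an independent proof sketch rather than a rediscovery.

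That said, your outline is a reasonable route to the result. The $\delta y$ half is essentially forced: $Y_t$ really is a one-dimensional birth--death chain, the fluid limit and global attraction of $y^*$ are immediate, and the exponential-time persistence follows either from the harmonic-function formula for birth--death escape probabilities or from the martingale/Bernstein argument you describe. For the $|V_t|$ half, your plan (Kurtz for the constant-time part, then a Lyapunov function $L=a\,i+b\,si+c\,ii$ with coefficients from the left Perron vector to push the stochastic process back) is the natural one, and you correctly flag the real issue: at $R_0=1$ the linear part of $\dot L$ vanishes, so the strict negativity you need on the band $\{L\in[\epsilon/2,\epsilon]\}$ has to come from the quadratic $I+I\to II$ interaction, and you must check that this beats the $O(\epsilon\,|\delta y|)$ error once $|\delta y|$ has been made small. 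That step is genuine but doable; it is precisely the content of the statement ``if $R_0=1$, after constant time $|V_t|\le\epsilon N$'' that the introduction attributes to \cite{socon}. Your sketch would benefit from writing out $\dot L$ explicitly along the five-dimensional ODE to verify the sign, but there is no structural gap.
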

In \cite{socon}, for $R_0\neq 1$ we deal with the regime $|V_t|\leq \epsilon N$ using comparison to a branching process, but for $R_0=1$ that approach does not work.  Since the value of $y_t$ affects the rate of spread of the infection, our first step is to get better control on the proportion of singles.

\section{Proportion of Singles}\label{secprop}
In this section we consider the proportion of singles $y_t$, and control its distance from equilibrium, defined as $\delta y := y-y^*$.  We begin by showing that after a little while, $\delta y_t$ has reached a fairly small value.
\begin{lemma}\label{ysquish}
Let $\tau = \inf\{t: |\delta y_t| \leq \log N/\sqrt{N}\}$.  Then there is $C>0$ so that $\tau \leq C\log N$ wgp in $N$.
\end{lemma}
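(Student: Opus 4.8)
\medskip
\noindent\emph{Proof plan.}
The plan hinges on one structural observation: the partnership dynamics are \emph{autonomous}. Partnership formation and dissolution do not see the infection, and infection and recovery leave each individual's single/partnered status unchanged, so $Y_t=S_t+I_t$ — equivalently $y_t$ — is by itself a continuous-time birth--death chain with steps $Y_t\to Y_t\pm 2$, whose drift is $g(y_t)+O(1/N)$, where $g$ is the right-hand side of the ODE for $y$ from \cite{socon}. Since $y^*$ is an attracting equilibrium, $g(y^*)=0$ and $\beta:=-g'(y^*)>0$, so after fixing $\epsilon_0>0$ small we have $g(y^*+z)\le-2\beta_1 z$ for $0\le z\le\epsilon_0$ and $g(y^*+z)\ge 2\beta_1|z|$ for $-\epsilon_0\le z\le 0$, with $\beta_1:=\beta/3$. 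I would first apply Lemma \ref{start} with this $\epsilon_0$ to fix a constant $T_0$ and an event $G$, holding whp, on which $|\delta y_t|\le\epsilon_0$ for all $T_0\le t\le e^{\gamma N}$. Since whp implies wgp and $T_0+C\log N\le (C+1)\log N$ for $N$ large, it suffices to bound $\mathbb{P}(\{\tau>T_0+C\log N\}\cap G)$ for a large constant $C$ to be chosen (and trivially $\tau=0$ if $|\delta y_0|\le\log N/\sqrt N$).

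Next I would write $\delta y_t=\delta y_0+\int_0^t g(y_s)\,ds+M_t$ with $M$ a martingale; since $Y$ jumps by $\pm 2$ at total rate $O(N)$, $M$ has jumps of size $2/N$ and $\langle M\rangle$ increases at rate at most $c_1/N$. On $\{\tau>T_0+C\log N\}$ the increments of $\delta y$ (size $2/N$) cannot carry it across the band $[-\log N/\sqrt N,\log N/\sqrt N]$ of width $2\log N/\sqrt N$, so $\delta y$ keeps a fixed sign on $[0,T_0+C\log N]$; on the intersection with $G$ say $0<\delta y_s\le\epsilon_0$ there (the case $\delta y_s<0$ being symmetric), so in fact $\log N/\sqrt N<\delta y_s\le\epsilon_0$ for $s\in[T_0,T_0+C\log N]$. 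On that interval $\beta_1\delta y_s+g(y_s)+O(1/N)\le-\beta_1\delta y_s+O(1/N)\le 0$, whence $\frac{d}{ds}\bigl(e^{\beta_1(s-T_0)}\delta y_s\bigr)\le e^{\beta_1(s-T_0)}\,dM_s$. Integrating over $[T_0,T_0+C\log N]$ and using $\delta y_{T_0}\le\epsilon_0$ together with $\delta y_{T_0+C\log N}>\log N/\sqrt N$ gives, with $\tilde N_u:=\int_{T_0}^{T_0+u}e^{\beta_1(s-T_0)}\,dM_s$,
\[
\tilde N_{C\log N}\ \ge\ N^{\beta_1 C}\frac{\log N}{\sqrt N}-\epsilon_0\ \ge\ \tfrac12\,N^{\beta_1 C-1/2}\log N\ =:\ a_N ,
\]
the last step valid for $N$ large once $C$ is chosen with $\beta_1 C\ge 1$.

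It then remains to show $\mathbb{P}(\tilde N_{C\log N}\ge a_N)$ is tiny, which I would obtain from a standard Bernstein/Freedman inequality for martingales with bounded jumps. Here $\tilde N$ has jumps of size $\le e^{\beta_1 C\log N}\cdot 2/N=2N^{\beta_1 C-1}=:b_N$, and deterministically
\[
\langle\tilde N\rangle_{C\log N}\ \le\ \frac{c_1}{N}\int_0^{C\log N}\!e^{2\beta_1 u}\,du\ \le\ \frac{c_1}{2\beta_1}\,N^{2\beta_1 C-1}\ =:\ V_N .
\]
Since $a_N^2/V_N=\Theta((\log N)^2)$ while $b_N a_N/V_N=\Theta(N^{-1/2}\log N)\to 0$, the inequality yields $\mathbb{P}(\tilde N_{C\log N}\ge a_N)\le\exp(-c(\log N)^2)$ for some $c>0$ and all large $N$. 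Adding the symmetric case and $\mathbb{P}(G^c)\le e^{-\gamma N}$ gives $\mathbb{P}(\tau>T_0+C\log N)\le e^{-c'(\log N)^2}$, so (after renaming $C$) $\tau\le C\log N$ wgp.

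The step I expect to be the genuine obstacle is squeezing the error down to \emph{good-probability} level $e^{-\gamma(\log N)^2}$ rather than merely $1-o(1)$: a second-moment estimate on $\delta y_t$, or a crude martingale bound over the window $[0,C\log N]$, gives only polynomial error $N^{-c}$, which is too weak for ``wgp''. What saves the argument is that the restoring drift confines $\delta y$ like an Ornstein--Uhlenbeck process with stationary fluctuations of order $N^{-1/2}$, so that reaching the much larger scale $\log N/\sqrt N$ from above is a deviation of cost $\asymp(\log N)^2$; inserting the integrating factor $e^{\beta_1 s}$ and then applying the martingale Bernstein inequality is precisely the device that makes this heuristic rigorous. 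Everything else — notably absorbing the $O(1/N)$ drift correction and the $\binom{Y_t}{2}$-versus-$Y_t^2/2$ discrepancy into slightly smaller $\beta_1$ and $\epsilon_0$ — is routine.
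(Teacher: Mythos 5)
Your argument is correct, but it is a genuinely different route from the paper's. The paper works entirely with the Poisson tail bound of Lemma \ref{chern}: over a fixed window of length $h$ it counts up- and down-transitions separately to show $|\delta y|$ contracts by a fixed factor $(1-c)$ with failure probability $e^{-cN\delta y_0^2}$, and then iterates this $O(\log N)$ times with a union bound; the good-probability rate $e^{-c(\log N)^2}$ emerges because the worst step is the last one, at scale $\delta y\approx\log N/\sqrt N$. You instead make a single global estimate: the semimartingale decomposition $\delta y_t=\delta y_0+\int_0^t g(y_s)\,ds+M_t$, the observation that on $\{\tau>T_0+C\log N\}$ the sign of $\delta y$ cannot flip (jumps of size $2/N$ cannot clear a band of width $2\log N/\sqrt N$), the Ornstein--Uhlenbeck integrating factor $e^{\beta_1 s}$ to convert the linear restoring drift into an exponentially large required martingale deviation $a_N\asymp N^{\beta_1C-1/2}\log N$ against variance $V_N\asymp N^{2\beta_1C-1}$, and Freedman's inequality, whose exponent $a_N^2/V_N\asymp(\log N)^2$ matches the paper's rate exactly. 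Your approach is cleaner and avoids the iteration, at the cost of importing a martingale concentration inequality that is not among the paper's stated tools (the paper proves only Poisson/binomial/geometric Chernoff bounds); the paper's approach is more elementary but more laborious. Two small points you should tidy up: the drift inequality $\tfrac{d}{ds}\bigl(e^{\beta_1(s-T_0)}\delta y_s\bigr)\le e^{\beta_1(s-T_0)}\,dM_s$ holds only while $\delta y_s\in(\log N/\sqrt N,\epsilon_0]$, so to apply Freedman you should stop the process at the exit time of that band (on the event under consideration the stopping time exceeds $T_0+C\log N$, so nothing is lost); and the condition $b_Na_N/V_N\to0$ should be checked, as you do, so that the Bernstein denominator is dominated by the variance term. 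Neither is a gap, just bookkeeping.
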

\begin{proof}
In what follows, $c$ refers to a small positive constant and $C$ to a large positive constant that may get smaller (respectively, larger) from step to step.  Moreover, some inequalities hold only for $N$ large enough.  By Lemma \ref{start}, for any fixed $\epsilon>0$ there are $T,\gamma>0$ so that whp $|\delta y_t| \leq \epsilon$ for $T \leq t \leq e^{\gamma N}$, so we may assume $|\delta y| \leq \epsilon$.  Let $ur(y) = r_-(1-y)/2$ and $dr(y) = r_+y^2/2-r_+y/(2N)$ so that $urN$ is the rate of upward transitions in $y_t$ and $drN$ is the rate of downward transitions.  For $y\in [0,1]$, $\max(ur(y),dr(y)) \leq C$ with $C=\max(r_+/2,r_-/2)$ and since $0<y^*<1$, letting $\delta = \min(y^*,1-y^*)/2>0$, for $|y-y^*|\leq \delta$, $\min( ur(y),dr(y)) \geq c - r_+/(2N) \geq c/2$ for $N$ large enough, with $c = \min(r_+/2,r_-/2)\delta^2$.  By definition of $y^*$, $ur(y^*)-dr(y^*) = r_+y^*/(2N)$, so writing $ur$ and $dr$ as functions of $\delta y$, $ur(\delta y)-dr(\delta y) = (ur'(y^*)-dr'(y^*))\delta y + o(\delta y) + r_+y^*/(2N)$, and note that $(ur'(y^*)-dr'(y^*))<0$.  If $|\delta y|N\rightarrow\infty$ as $N\rightarrow\infty$ then $r_+y^*/(2N) = o(\delta y)$; in this case this is satisfied since we may assume $|\delta y| \geq 1/\sqrt{N}$.  Then, for $\epsilon>0$ small enough, $N$ large enough and $|\delta y|\leq\epsilon$, if $\delta y>0$ then $dr - ur \geq c\delta y$ and if $\delta y<0$ then $ur - dr \geq c|\delta y|$.\\

Supposing $\delta y_0 > 0$ and fixing $c,h>0$, let $\tau$ be the first time $t$ such that $\delta y_t \leq (1-2ch(1-ch)/3)\delta y_0$.  Since $ur$ decreases with $y$, if $t<\tau$ then $ur(\delta y_t) \leq ur((1-ch)\delta y_0)$, and since $dr$ increases with $y$, $dr(\delta y_t) \geq dr((1-ch)\delta y_0) \geq ur((1-ch)\delta y_0) + c(1-ch)\delta y_0$.  Using Lemma \ref{chern} with $\mu = ur((1-ch)\delta y_0)hN$ so that $chN \leq \mu \leq ChN$ and $(1+\delta)\mu = \mu+ch(1-ch)\delta y_0 N/3$ so that $\delta = ch(1-ch)\delta y_0 N/(3\mu) \geq c(1-ch)\delta y_0/(3C) = C'\delta y_0$ with $C':=c(1-ch)/3C$, with probability $\geq 1-\exp(-cC'^2h N\delta y_0^2/4))$, either $\tau<h$ or at most $[ur((1-ch)\delta y_0) + c(1-ch)\delta y_0/3]hN$ upward transitions occur in the time interval $[0,h]$.  Using Lemma \ref{chern} with $\mu = dr((1-ch)\delta y_0)hN$ and $(1-\delta)\mu = \mu -c(1-ch)\delta y_0 N/3$, with probability $\geq 1-\exp(-cC'^2hN\delta y_0^2/4)$, either $\tau<h$ or at least $[ur((1-ch)\delta y_0) + 2c(1-ch)\delta y_0/3]hN$ downward transitions occur in the time interval $[0,h]$.  Since each transition moves $\delta y$ by $2/N$, with probability $\geq 1-2\exp(-cC'^2h N \delta y_0^2/4)$, either $\tau<h$ or $\delta y_h \leq \delta y_0 - 2ch(1-ch)\delta y_0/3 = \delta y_0(1-2ch(1-ch)/3)$ which forces $\tau\leq h$.\\

Decreasing $c>0$ if necessary, there are constants $c,h>0$ so that if $\delta y_0 \geq 1/\sqrt{N}$, then with probability $\geq 1-\exp(-cN\delta y_0^2)$ there is $t \in (0,h]$, so that $\delta y_t \leq (1-c)\delta y_0$.  Since the analogous argument applies when $\delta y_0 <0$, we may replace $\delta y_t$ with $|\delta y_t|$ in the previous statement.  So long as $|\delta y|\geq c\log N/\sqrt{N}$ the above probability is at least $1-\exp(-cN(c\log N/\sqrt{N})^2) = 1-\exp(-c(\log N)^2)$, taking $c>0$ smaller if needed.  Since $|\delta y_0| \leq 1$, choosing $k$ so that $|\delta y_0|(1-c)^k \leq \log N/\sqrt{N}$ gives $k \leq C \log N$ with $C = 1/(2\log(1/(1-c)))$.  Thus with probability at least $1-C\log (N) e^{-c (\log N)^2} \geq 1-e^{-(c/2)(\log N)^2}$ for $N$ large enough, there is a time $t \leq Ch\log N$ such that $|\delta y_t| \leq \log N/\sqrt{N}$, as desired.
\end{proof}
The next step is to zoom in on the spatial scale $1/\sqrt{N}$.  Before doing so we collect some facts about random walk on an interval, absorbed at the boundary.
\begin{lemma}\label{gambler}
Let $X_n$ be a (discrete time) random walk on $\{0,...,M\}$, absorbed at $\{0,M\}$, with probability $p_{xx+1}$ of going from $x\rightarrow x+1$ and $p_{xx-1}=1-p_{xx+1}$ of going from $x\rightarrow x-1$, for $x=1,...,M-1$.  Let $T=\inf\{n\geq 0:X_n \in \{0,M\}\}$ and let $b(x) = p_{xx-1}/p_{xx+1}$, then for $x=1,...,M-1$,
\begin{equation*}
\mathbb{P}(X_T = M \mid X_0 = x) = \frac{1+\sum_{j=1}^{x-1}\prod_{i=1}^jb(i)}{1+\sum_{j=1}^{M-1}\prod_{i=1}^jb(i)}
\end{equation*}
with the numerator equal to $1$ when $x=1$.  If $b(x)\equiv b$ is constant in $x$ (although it may depend on $M$) and $M$ is large enough, then for integer $m\geq 1$ and some $C>0$,
\begin{equation*}
\mathbb{P}(T \geq mCM^2) \leq 2^{-m}
\end{equation*}
and if $b=1$, then for each $c>0$, there is $p(c)>0$ with $p(c)\rightarrow 1^-$ as $c\rightarrow 0^+$, such that
\begin{equation*}
\mathbb{P}(T \geq cM^2) \geq p(c)
\end{equation*}
The above two statements remain true if $X_t$ is a continuous time random walk moving at rate $1$.
\end{lemma}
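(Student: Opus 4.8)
The plan is to treat the four assertions in turn. The gambler's ruin formula is standard: with $h(x)=\mathbb{P}(X_T=M\mid X_0=x)$, the equation $h(x)=p_{xx+1}h(x+1)+p_{xx-1}h(x-1)$ on $\{1,\dots,M-1\}$ together with $h(0)=0$, $h(M)=1$ determines $h$; setting $g(x)=h(x)-h(x-1)$ gives $g(x+1)=b(x)g(x)$, so $g(j)=g(1)\prod_{i=1}^{j-1}b(i)$, and summing $h(x)=\sum_{j=1}^x g(j)$ and solving $h(M)=1$ for $g(1)$ yields the stated ratio.

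For $\mathbb{P}(T\ge mCM^2)\le 2^{-m}$ when $b(x)\equiv b$, the heart of the matter is the estimate $\mathbb{E}_x[T]\le 2M^2$, uniform over $x$ and over $b$. I would split on the per-step drift $\mu=p_{xx+1}-p_{xx-1}=(1-b)/(1+b)$. If $|\mu|\ge 1/(2M)$, then $X_{n\wedge T}-\mu\,(n\wedge T)$ is a bounded martingale and optional stopping gives $\mathbb{E}_x[T]\le M/|\mu|\le 2M^2$. If $|\mu|<1/(2M)$, I would instead use the Lyapunov function $\phi(y)=y(M-y)$: on $\{1,\dots,M-1\}$ one computes $\mathbb{E}[\phi(X_{n+1})-\phi(X_n)\mid X_n]=\mu(M-2X_n)-1\le |\mu|M-1\le-\tfrac12$, so $\phi(X_{n\wedge T})+\tfrac12(n\wedge T)$ is a supermartingale and $\mathbb{E}_x[T]\le 2\phi(x)\le M^2/2$. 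In either case $\mathbb{P}_x(T>4M^2)\le\tfrac12$ by Markov's inequality, uniformly in $x$, and applying the Markov property at the times $4M^2,8M^2,\dots$ gives $\mathbb{P}_x(T>4mM^2)\le 2^{-m}$; so $C=4$ works.

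For the lower bound with $b=1$, $X_{n\wedge T}$ is a martingale with unit-bounded increments, so $\mathrm{Var}_x(X_n)\le n$. I would take $X_0=x$ in the bulk — say $\min(x,M-x)\ge M/3$, which is the regime in which the lemma is used; note that no bound of this form can hold when $x$ is within a bounded distance of an endpoint. Then $\{T<cM^2\}$ forces $\max_{n\le cM^2}(x-X_n)\ge x$ or $\max_{n\le cM^2}(X_n-x)\ge M-x$, and since $x-X_{n\wedge T}$ is a mean-zero martingale, Doob's $L^2$ maximal inequality bounds the probability of each event by $\mathrm{Var}_x(X_{cM^2})/(M/3)^2\le 9c$. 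Hence $\mathbb{P}_x(T<cM^2)\le 18c$, and $p(c):=\max(1-18c,0)$ has $p(c)\to1$ as $c\to0^+$; for $c$ not small one still has $\mathbb{P}_x(T\ge cM^2)>0$ uniformly in $M$ by a restart argument (or the invariance principle), which is all that is needed there.

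The continuous-time versions follow by subordination: write the rate-$1$ walk as $X^{c}_t=X_{N(t)}$ with $N$ an independent rate-$1$ Poisson process, so the continuous absorption time is $T^{c}=\sum_{i=1}^{T}E_i$ with $E_i$ i.i.d.\ $\mathrm{Exp}(1)$. For the upper tail, $\{T^{c}\ge s\}\subseteq\{N(s)<s/2\}\cup\{T>s/2\}$, and Lemma~\ref{chern} makes the first event have probability $\le e^{-s/8}$, so $s=mCM^2$ reproduces the geometric bound; for the lower bound, $\{T^{c}<cM^2\}\subseteq\{T<2cM^2\}\cup\{\sum_{i=1}^{2cM^2}E_i<cM^2\}$ with the second probability exponentially small in $cM^2$, giving $\mathbb{P}(T^{c}\ge cM^2)\ge p(2c)-o(1)$. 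I expect the main obstacle to be the uniform-in-$b$ hitting-time estimate: a drift bound degenerates as $b\to1$ and a second-moment/Lyapunov bound degenerates as $b$ leaves a neighbourhood of $1$, so the case split above (or a single more clever potential) is essential; the rest is routine, provided one is careful that the bulk hypothesis on $X_0$ is genuinely used in the third part.
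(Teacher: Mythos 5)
Your proof is correct, and for the two quantitative claims it takes a genuinely different route from the paper. For the bound $\mathbb{P}(T\geq mCM^2)\leq 2^{-m}$, the paper symmetrizes (WLOG $b\geq 1$), couples $X_n$ below a simple random walk on $\mathbb{Z}$, and invokes Feller's limit theorem for the first-passage time of simple random walk to get $\mathbb{P}(T>CM^2)\leq 1/2$ before iterating; you instead prove the uniform moment bound $\mathbb{E}_x[T]\leq 2M^2$ directly, splitting on the size of the drift $\mu=(1-b)/(1+b)$ and using optional stopping of $X_{n\wedge T}-\mu(n\wedge T)$ in one regime and the Lyapunov function $\phi(y)=y(M-y)$ in the other, then finishing with Markov's inequality and the Markov property. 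Your version is more self-contained (no external limit theorem, no coupling) and yields explicit constants; the degenerating-drift difficulty you flag is real and your case split handles it cleanly. For the lower bound with $b=1$ the paper again cites Feller, whereas you use Doob's $L^2$ maximal inequality on the stopped martingale; both work, but you correctly observe that the statement as written cannot hold uniformly in the starting point (from $x=1$ one has $T=1$ with probability $1/2$), and that a bulk hypothesis such as $\min(x,M-x)\geq M/3$ — which is how the lemma is actually invoked later, always from the midpoint — must be added; the paper implicitly starts from $M/2$ without saying so. The only soft spot is that your explicit $p(c)=\max(1-18c,0)$ vanishes for $c\geq 1/18$, so for large $c$ you do need the restart/invariance-principle argument you sketch to get $p(c)>0$; this is routine but should be written out if the lemma is to be quoted for all $c>0$. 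The continuous-time reduction by Poisson subordination is exactly what the paper intends (it omits the details), and your tail estimates there are fine.
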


\begin{proof}
The function $h(x) := \mathbb{P}(X_T=M \mid X_0=x)$ satisfies $h(0)=0,h(M)=1$ and for $x \in \{1,...,M-1\}$,
\begin{equation*}
h(x) = p_{xx-1}h(x-1) + p_{xx+1}h(x+1)
\end{equation*}
which rearranged gives $h(x+1) = 1/p_{xx+1}(h(x)-p_{xx-1}h(x-1))$ and subtracting $h(x)$ from both sides while noting $1/p_{xx+1}-1 = p_{xx-1}/p_{xx+1} = b(x)$ gives $h(x+1)-h(x) = b(x)(h(x)-h(x-1))$ and iterating,
\begin{equation*}
h(x+1)-h(x) = (h(1)-h(0))\prod_{j=1}^x b(j) = h(1)\prod_{j=1}^x b(j)
\end{equation*}
and since $h(x+1) = h(1)+\sum_{j=1}^x(h(j+1)-h(j))$,
\begin{equation*}
h(x+1) = h(1)[1+ \sum_{j=1}^x\prod_{i=1}^j b(i)]
\end{equation*}
for $x \in \{1,...,M-1\}$.  Using $h(M)=h(1)[1+\sum_{j=1}^{M-1}\prod_{i=1}^j b(i)]=1$ to solve for $h(1)$ then gives the first expression.\\

If $b(x)\equiv b$ is constant, by symmetry it is enough to consider $b\geq 1$.  In this case a simple coupling argument shows that $X_n$ is stochastically dominated by simple random walk $Y_n$ on $\mathbb{Z}$, provided $X_0 \leq Y_0$.  Letting $Y_0 = X_0$ and $T' = \inf\{n:Y_n=0\}$, $T$ is stochastically dominated by $T'$.  Letting $T_{\pm}$ denote $\inf\{n: Y_n = M/2 \pm M/2\}$, Theorem 3 in \cite{fellerprob} gives, for each $a>0$,
\begin{equation*}
\lim_{M\rightarrow\infty}\mathbb{P}(T_+ \leq aM^2/4) = 2\int_{1/\sqrt{a}}^{\infty}e^{-x^2/2}dx
\end{equation*}
and the same holds, by symmetry, for $T_-$.  Taking $a$ large enough that the right-hand side is $\geq 3/4$, letting $C = a/4$ and noting that $T' = \min(T_+,T_-)$, we find $\mathbb{P}(T' > CM^2) \leq 1/2$ for large enough $M$, and the same holds for $T$ by comparison.  Using the Markov property to iterate gives $\mathbb{P}(T > mCM^2) \leq (1/2)^m$ as desired.  To get the last statement, given $c>0$ let $r(c) = 1-2\int_{1/\sqrt{4c}}^{\infty}e^{-x^2/2}dx>0$ and let $p(c) = r(c)e^{-c}$.  Then, $\lim_{M\rightarrow\infty}\mathbb{P}(T' > cM^2)=r(c)>p(c)$, for large enough $M$ the value is $\geq p(c)$, moreover $p(c)\rightarrow 1^-$ as $c\rightarrow 0^+$.\\

The result in continuous time follows in the same way after controlling the number of transitions up to time $t$ with the help of Lemma \ref{chern}; details are omitted.
\end{proof}

Next we look at a specific Markov chain that as shown later roughly corresponds to the sequence of visits of $\delta y_t$ to the points $\{k/\sqrt{N}:k=1,...\}$.
\begin{lemma}\label{stepchain}
Let $K_n$ be the Markov chain on $\{1,2,...\}$ with $p_{12}=1$ and $p_{kk-1}+p_{kk+1}=1$ with $p_{kk-1}/p_{kk+1}=e^{ck}$ for $k>1$, for some fixed $c>0$.  Let $T = \inf\{n>0:K_n=1\}$ and for $j,k>1$ let $u(j,k) = \mathbb{P}(K_n = k \textrm{ for some }0<n<T \mid K_0=j)$ and let
\begin{equation*}
U(j,k) = \sum_{i=0}^{T-1}\mathbf{1}(K_i = k \mid K_0=j)
\end{equation*}
Then $u(j,k)=1$ if $j>k$, $u(k,k)=p_{kk+1}+p_{kk-1}u(k-1,k)$ and $u(j,k)\leq je^{-c(\binom{k}{2}-\binom{j}{2})}$ if $j<k$, and for $d>0$,
\begin{equation*}
\mathbb{P}(U(j,k)>d) = u(j,k)u(k,k)^d
\end{equation*}
\end{lemma}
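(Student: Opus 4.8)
The statement packages four assertions, and I would establish them in the order: (i) $u(j,k)=1$ when $j>k$; (ii) the recursion $u(k,k)=p_{kk+1}+p_{kk-1}u(k-1,k)$; (iii) the bound $u(j,k)\le je^{-c(\binom{k}{2}-\binom{j}{2})}$ when $j<k$; and (iv) the identity $\mathbb{P}(U(j,k)>d)=u(j,k)u(k,k)^d$, which uses (i)--(iii). First note that since $p_{kk-1}=e^{ck}/(1+e^{ck})\ge e^{2c}/(1+e^{2c})>1/2$ for every $k\ge 2$, while away from $1$ the chain $K_n$ is dominated by a walk with constant down-probability $>1/2$, which reaches any lower level in finite time almost surely; hence $T<\infty$ a.s.\ and every quantity above is well defined. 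I would also use the convention $u(1,k):=0$: viewing the chain as stopped at $T$, state $1$ is absorbing, so once the chain reaches $1$ no further visits to $k$ are counted, and this is exactly the value that makes the $k=2$ case of (ii) correct. Part (i) is then immediate: for $K_0=j>k>1$, the nearest-neighbour path from $j$ to $1$ must cross $k$, and it does so at some time strictly between $0$ and $T$ (since $K_0=j\ne k$ and $K_T=1\ne k$), so $u(j,k)=1$. For (ii), condition on the first step out of $k$: with probability $p_{kk+1}$ the chain moves to $k+1$ and then, by the nearest-neighbour property, must return to $k$ before reaching $1$, contributing $p_{kk+1}$; with probability $p_{kk-1}$ it moves to $k-1$, from which the probability of reaching $k$ before $T$ is by definition $u(k-1,k)$ (read as $0$ if $k-1=1$). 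In particular $u(k,k)<1$, so the powers appearing in (iv) make sense.

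For (iii), with $1<j<k$ the event defining $u(j,k)$ is, because the increments are $\pm 1$, exactly $\{\tau_k<\tau_1\}$ for the walk stopped at the endpoints of $\{1,\dots,k\}$ -- a one-dimensional gambler's ruin. Relabelling $x\mapsto x-1$ brings this into the setting of Lemma \ref{gambler} on $\{0,\dots,k-1\}$ with interior biases $e^{cx}$, and the exit formula there reads $u(j,k)=\frac{\sum_{y=1}^{j-1}\rho_y}{\sum_{y=1}^{k-1}\rho_y}$ with $\rho_y:=\prod_{z=2}^{y}e^{cz}=e^{c(\binom{y+1}{2}-1)}$ (so $\rho_1=1$). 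Since $\rho_y$ is increasing in $y$, the numerator is at most $(j-1)\rho_{j-1}$ and the denominator at least $\rho_{k-1}$, whence
\[
u(j,k)\ \le\ (j-1)\,\frac{\rho_{j-1}}{\rho_{k-1}}\ =\ (j-1)\,e^{c(\binom{j}{2}-\binom{k}{2})}\ \le\ j\,e^{-c(\binom{k}{2}-\binom{j}{2})},
\]
the middle equality being the one short computation, namely that the exponents $\binom{y+1}{2}-1$ at $y=j-1$ and $y=k-1$ differ by $\binom{j}{2}-\binom{k}{2}$.

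For (iv), I would work with $j\ne k$ (the relevant case, in which $K_0=j\ne k$ so the $i=0$ term of the sum drops out). Let $0<\sigma_1<\sigma_2<\cdots$ be the successive times at which $K$ visits $k$, with $\sigma_m=\infty$ if there are fewer than $m$ such times; then $U(j,k)\ge m$ if and only if $\sigma_m<T$. Now $\mathbb{P}(\sigma_1<T)=u(j,k)$ by definition (equal to $1$ when $j>k$ by (i), consistent with $U(j,k)\ge 1$ a.s.). Each $\sigma_m$ is a stopping time, finite on $\{\sigma_m<T\}$, at which time $K=k$ and $1$ has not yet been hit, so the residual clock equals the hitting time of $1$ for the shifted chain; the strong Markov property then gives $\mathbb{P}(\sigma_{m+1}<T\mid\sigma_m<T)=u(k,k)$. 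Iterating, $\mathbb{P}(U(j,k)\ge m)=u(j,k)u(k,k)^{m-1}$, and taking $m=d+1$ for $d$ a positive integer yields $\mathbb{P}(U(j,k)>d)=u(j,k)u(k,k)^d$ (for general real $d>0$ the statement holds with $d$ replaced by $\lfloor d\rfloor$).

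None of these steps is deep. The points that demand genuine care are the boundary bookkeeping around state $1$ and the stopping time $T$ -- making the recursion in (ii) and the renewal identity in (iv) literally correct when $k=2$ or $j=1$, and being precise about ``strictly before $T$'' versus ``at time $0$'' in the definitions of $u$ and $U$ -- together with the explicit gambler's ruin sum in (iii). I expect (iii), namely putting the partial sums of the $\rho_y$ into exactly the stated closed form with exponent $\binom{k}{2}-\binom{j}{2}$, to be the one place where an actual (if routine) calculation is carried out, and hence the main obstacle.
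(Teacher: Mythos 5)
Your proposal is correct and follows essentially the same route as the paper: the nearest-neighbour crossing argument for $j>k$, first-step analysis for $u(k,k)$, the gambler's ruin formula from Lemma \ref{gambler} with the geometric weights $\rho_y$ for $j<k$, and the strong Markov property at successive visits to $k$ for the geometric tail of $U(j,k)$. Your bookkeeping at the boundary (the convention $u(1,k)=0$ and the $i=0$ term in $U$) is in fact more careful than the paper's, which simply calls the recursion ``obvious'' and cites the Markov property for the last identity.
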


\begin{proof}
If $j>k$ then $u(j,k)=1$ since $K_n$ only moves one step at a time, so must visit all of $\{2,..,K_0\}$ before hitting $1$.  The formula for $u(k,k)$ is obvious.  If $j<k$ then letting $T_k = \inf\{n:K_n \in \{1,k\}\}$, $u(j,k) = \mathbb{P}(K_{T_k}=k \mid K_0=j)$ so using Lemma \ref{gambler},
\begin{equation*}
u(j,k) = \frac{1+\sum_{\ell=1}^{j-1}e^{c(\ell+1)\ell/2}}{1+\sum_{\ell=1}^{k-1}e^{c(\ell+1)\ell/2}} \leq je^{(c/2)(j(j-1)-k(k-1))}
\end{equation*}
as desired.  The last statement follows from the definition of $u(j,k)$ and the Markov property.
\end{proof}

Using this result we can control the sum of $K_n$ from the time it starts at level $k$ until it hits level $1$.
\begin{lemma}\label{stepchain2}
For $K_n$ as in Lemma \ref{stepchain} and $j>1$ let $K_0=j$ and let $U(j) = \sum_{n=0}^{T-1} K_n = \sum_{k>1} kU(j,k)$, then if $d$ is large enough,
\begin{equation*}
\mathbb{P}(U(j) \leq j^2d+jd^2+d^3) \geq 1-3e^{-2cd/3}
\end{equation*}
\end{lemma}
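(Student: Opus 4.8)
The plan is to split $U(j) = \sum_{k>1} k\,U(j,k)$ into the contribution from levels $k \le j$ (where the chain is "above equilibrium" and returns to each level essentially surely) and the contribution from levels $k > j$ (where $u(j,k)$ decays like a Gaussian). For each fixed $k$, Lemma~\ref{stepchain} gives $\mathbb{P}(U(j,k) > d) = u(j,k)\,u(k,k)^d$, so $U(j,k)$ is stochastically dominated by a geometric-type random variable: conditionally on being visited, the number of visits is geometric with success probability $1 - u(k,k)$. The first thing I would do is get a uniform lower bound on $1 - u(k,k)$, i.e. an upper bound on $u(k,k) = p_{kk+1} + p_{kk-1}u(k-1,k)$ that is bounded away from $1$. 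Since $p_{kk+1} = 1/(1+e^{ck}) \le 1/(1+e^c)$ for $k \ge 1$ and $u(k-1,k) \le (k-1)e^{-ck}$ (roughly) from the $j<k$ bound in Lemma~\ref{stepchain}, one gets $u(k,k) \le \rho < 1$ for some $\rho$ depending only on $c$, for all $k$ large; the finitely many small $k$ are harmless. Hence each $U(j,k)$ has exponential tails with a rate uniform in $k$.

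Next I would assemble the tail bound for $U(j)$. Write $U(j) = \sum_{k=2}^{j} k\,U(j,k) + \sum_{k>j} k\,U(j,k)$. For the low-level part, each $U(j,k)$ is (dominated by) geometric, so $\sum_{k=2}^j k\,U(j,k)$ is a sum of $j$ independent-ish geometrics weighted by $k \le j$; its "mean" is $O(j^2)$ and a Chernoff/union bound over the $j$ terms shows it exceeds $j^2 d$ with probability at most $\sim j\,e^{-cd}$, which I would want to absorb — this is where the shape $j^2 d + j d^2 + d^3$ comes from: allocate budget $jd$ to each of the $j$ low levels. Actually the cleaner route is a single exponential-moment computation: bound $\mathbb{E}[e^{s U(j)}]$ by $\prod_k \mathbb{E}[e^{sk U(j,k)}]$, using $\mathbb{E}[e^{sk U(j,k)}] \le 1 + u(j,k)\frac{(e^{sk}-1)\rho\,\dots}{\dots}$ and then $1+x \le e^x$, so that $\log \mathbb{E}[e^{sU(j)}] \le \sum_k u(j,k)\cdot(\text{something like } \tfrac{e^{sk}}{1-\rho e^{sk}})$. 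With $s$ chosen of order $c/d$ (so $e^{sk}$ is controlled on the relevant range of $k$) one gets $\sum_k u(j,k) e^{sk}$ split as $\sum_{k\le j} (\cdots) + \sum_{k>j} j e^{-c(\binom{k}{2}-\binom{j}{2})} e^{sk}$; the first sum contributes $O(j \cdot j) = O(j^2)$ after multiplying by $s$, wait — one must track the powers of $d$ carefully. Then Markov's inequality applied to $e^{sU(j)}$ with $s \asymp c/d$ yields $\mathbb{P}(U(j) > j^2 d + j d^2 + d^3) \le e^{-s(j^2d+jd^2+d^3) + \sum(\cdots)} \le 3e^{-2cd/3}$ once $d$ is large enough that the main term dominates.

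The main obstacle, and the step I would spend the most care on, is the bookkeeping in that exponential-moment sum: one needs the three terms $j^2 d$, $j d^2$, $d^3$ to appear naturally from $s \cdot \sum_k k\,u(j,k)$ with the right choice $s \asymp 1/d$, and simultaneously needs $e^{sk}$ to stay $o(1/\rho)$ (or at least bounded) over $k \le j$ and to be beaten by the Gaussian factor $e^{-c\binom{k}{2}/1}$ for $k > j$. The choice $s = 2c/(3d)$ (to produce the stated rate $2cd/3$) forces a constraint like $j \lesssim d^{?}$ or else a truncation — more likely the intended reading is that $d$ is a free large parameter and $j$ arbitrary, with the cubic $d^3$ term precisely there to dominate the $k>j$ tail for any $j$. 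Concretely: $\sum_{k>j} k j e^{-c(\binom{k}{2}-\binom{j}{2})}e^{sk}$ is, by shifting $k = j + \ell$, at most $\sum_{\ell \ge 1} (j+\ell) j e^{-c\ell j/2} e^{s(j+\ell)}$, which is $O(j)$ for $s$ small — so this tail is actually lower-order and the $d^3$ term is the slack needed when $j$ is itself only of size $O(d)$ or smaller. I would verify the inequality in the two regimes $j \le d$ and $j > d$ separately if a single clean estimate proves elusive. Once the exponential-moment bound is in hand, the conclusion $\mathbb{P}(U(j) \le j^2d + jd^2 + d^3) \ge 1 - 3e^{-2cd/3}$ follows immediately from Markov's inequality applied to the moment generating function.
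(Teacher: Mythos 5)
Your opening instinct---a union bound over levels with a per-level budget, using the geometric tail $\mathbb{P}(U(j,k)>d_k)=u(j,k)u(k,k)^{d_k}$---is exactly the paper's argument, but the two places where you then deviate from it both create real gaps. First, the ``cleaner route'' via $\mathbb{E}[e^{sU(j)}]\le\prod_k\mathbb{E}[e^{skU(j,k)}]$ is not a valid inequality: the $U(j,k)$ for different $k$ count visits to different levels on the \emph{same} excursion, so they are strongly positively dependent, and for positively correlated variables the moment generating function of the sum typically \emph{exceeds} the product of the marginal ones. Lemma \ref{stepchain} only gives you the marginal tails, not the joint law, so the one tool that survives the dependence is the union bound $\mathbb{P}\big(\sum_k kU(j,k)>\sum_k kd_k\big)\le\sum_k\mathbb{P}(U(j,k)>d_k)$, which is what the paper uses.

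Second, weakening $u(k,k)$ to a uniform constant $\rho<1$ loses the uniformity in $j$ that the statement requires: with only $\mathbb{P}(U(j,k)>d)\le\rho^{d}$ per level, the union bound over $k=2,\dots,j$ gives $j\rho^{d}$, which is not $\le 3e^{-2cd/3}$ once $j$ is large relative to $d$ (and ``$d$ large enough'' must not depend on $j$). The fix is to keep the $k$-dependence: $u(k,k)=p_{kk+1}+p_{kk-1}u(k-1,k)\le e^{-ck}+(k-1)e^{-c(k-1)}\le 2e^{-ck/3}$ \emph{decays in $k$}, so $\mathbb{P}(U(j,k)>d)\le 2e^{-cdk/3}$ and the sum over all $k\le j$ is a geometric series bounded by $3e^{-2cd/3}$ independently of $j$. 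For $k=j+i>j$ one takes the budget $d_k=d-3i$ and uses your Gaussian observation $u(j,j+i)\le e^{-ci^2}$ to preserve the same $e^{-cdk/3}$ bound; the total budget $\sum_{k\le j}kd+\sum_{i\le d/3}(j+i)(d-3i)\le j^2d+jd^2+d^3$ is precisely where the three terms come from. No splitting into regimes $j\le d$ versus $j>d$ is then needed.
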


\begin{proof}
Let $d_k$ be a sequence of integers, then
\begin{equation*}
\mathbb{P}(U(j) > \sum_{k>1} kd_k) \leq \sum_{k>1} \mathbb{P}(U(j,k) > d_k)
\end{equation*}
so for each $d$, we want a sequence $d_k$ such that the above sum is at most $3e^{-2cd/3}$.  By definition $p_{kk+1}=1/(1+e^{ck}) \leq e^{-ck}$ and from Lemma \ref{stepchain}, $u(k-1,k) \leq (k-1)e^{-c(k-1)}$, so $u(k,k) \leq 2e^{-ck/3}$ uniformly for $k>1$.  Setting $d_k=d$ for $k \leq j$ gives $\mathbb{P}(U(j,k) > d) \leq 2e^{-cdk/3}$.  If $k>j$ then writing $k=j+i$ we find $u(j,j+i) \leq (j+i)e^{-c(ji+i(i-1))} \leq e^{-ci^2}$, so $\mathbb{P}(U(j,k)>d_k) \leq 2e^{-c(i^2 + d_kk/3)}$.  In this case choose $d_k$ so that $i^2+d_kk/3 \leq dk/3$, for which $d_k = d - 3i$ suffices.  Then $\sum_{k>1}\mathbb{P}(U(j,k)>d_k) \leq \sum_{k>1}2e^{-cdk/3} \leq 2e^{-2cd/3}/(1-e^{-cd/3})$ which is $\leq 3e^{-2cd/3}$ for $d$ large enough, and
\begin{equation*}
\sum_{k>1} kd_k = \sum_{k=2}^j kd + \sum_{i=1}^{\lfloor d/3 \rfloor}(j+i)(d-3i) \leq j^2d+jd^2+d^3
\end{equation*}
\end{proof}

Next we control the sum of $K_n$ over repeated excursions away from level $1$.  Before doing so we collect a few useful estimates for some well-known distributions, using the same method as in Lemma \ref{chern}.

\begin{lemma}\label{largedev}
If $X \sim \textrm{binomial}(n,p)$ then for $x >0$ and letting $r = x/np$,
\begin{equation*}
\mathbb{P}(X > x) \leq e^{-x(1/r+\log (r/e))}
\end{equation*}
and for $0<\delta<1$,
\begin{equation*}
\mathbb{P}(X < (1-\delta)np) \leq e^{-np\delta^2/2}
\end{equation*}
If $X_i,i=1,...,m$ are independent and $X_i \sim \textrm{geometric}(p)$ i.e., $\mathbb{P}(X_i>d) = p^d$ for $i=1,...,m$ then letting $S_m =X_1+...+X_m$,
\begin{equation*}
\mathbb{P}(S_m > (1+\delta)m/(1-p)) \leq e^{-m(\delta-\log(1+\delta))}
\end{equation*}
\end{lemma}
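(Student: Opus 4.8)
The plan is to obtain all three estimates by the exponential Markov (Chernoff) method, exactly as in Lemma~\ref{chern}: for a nonnegative random variable $X$ and $\theta>0$, bound $\mathbb{P}(X>x)\le e^{-\theta x}\mathbb{E}[e^{\theta X}]$ (and $\mathbb{P}(X<x)\le e^{\theta x}\mathbb{E}[e^{-\theta X}]$), compute the moment generating function in closed form, and optimise over $\theta$. For the binomial the only input beyond Lemma~\ref{chern} is the elementary bound $1-p+pe^{\pm\theta}=1+p(e^{\pm\theta}-1)\le e^{p(e^{\pm\theta}-1)}$, which replaces the binomial m.g.f.\ by the m.g.f.\ of a Poisson variable of mean $np$; the two binomial inequalities then fall out of the same two optimisations already carried out in Lemma~\ref{chern}.

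For the upper tail, $\mathbb{E}[e^{\theta X}]=(1-p+pe^{\theta})^n\le e^{np(e^{\theta}-1)}$ gives $\mathbb{P}(X>x)\le e^{-\theta x+np(e^{\theta}-1)}$; minimising the exponent yields $e^{\theta}=x/(np)=r$ — a legitimate choice precisely when $r>1$, i.e.\ when $x$ exceeds the mean, which is the regime of interest (at $r=1$ the asserted bound is the trivial $\mathbb{P}\le1$) — and substituting $\theta=\log r$ together with $np=x/r$ turns the exponent into $-x\log r+x-x/r=-x\big(1/r+\log(r/e)\big)$. For the lower tail, $\mathbb{E}[e^{-\theta X}]=\big(1-p(1-e^{-\theta})\big)^n\le e^{-np(1-e^{-\theta})}$ gives $\mathbb{P}(X<(1-\delta)np)\le e^{\theta(1-\delta)np-np(1-e^{-\theta})}$, which is exactly the quantity minimised in the Poisson lower-tail argument of Lemma~\ref{chern} with $\mu=np$; the choice $\theta=-\log(1-\delta)$ produces the exponent $-np\big((1-\delta)\log(1-\delta)+\delta\big)$, and one closes with the calculus fact that $(1-\delta)\log(1-\delta)+\delta\ge\delta^2/2$ on $[0,1)$ (both sides and their first derivatives agree at $0$, and the left side has the larger second derivative $\delta/(1-\delta)\ge\delta$).

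The geometric sum is the step that needs an idea. Since $\mathbb{P}(X_i>d)=p^d$ for all $d\ge0$ we have $X_i\ge1$ a.s., $\mathbb{E}[X_i]=\sum_{d\ge0}p^d=1/(1-p)$, and $\mathbb{E}[e^{\theta X_i}]=(1-p)e^{\theta}/(1-pe^{\theta})$ for $\theta<\log(1/p)$; optimising the resulting Chernoff bound directly leads to an awkward $p$-dependent exponent, so instead I would first dominate the geometric m.g.f.\ by that of an exponential with the same mean:
\begin{equation*}
\mathbb{E}[e^{\theta X_i}]=\frac{(1-p)e^{\theta}}{1-pe^{\theta}}\le\frac{1}{1-\theta/(1-p)}\qquad(0\le\theta<1-p),
\end{equation*}
an inequality which, after clearing denominators, is equivalent to $e^{\theta}(1-\theta)\le1$. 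Multiplying over the $m$ independent summands gives $\mathbb{P}(S_m>t)\le e^{-\theta t}\big(1-\theta/(1-p)\big)^{-m}$, and with $t=(1+\delta)m/(1-p)$ and $\theta=(1-p)\delta/(1+\delta)\in(0,1-p)$ one gets $\theta t=\delta m$ and $1-\theta/(1-p)=1/(1+\delta)$, so the bound collapses to $e^{-\delta m}(1+\delta)^m=e^{-m(\delta-\log(1+\delta))}$. (This is the classical Cramér rate for sums of exponentials; the geometric has no heavier a tail, which is precisely what the m.g.f.\ comparison encodes.)

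The only genuine obstacle is the geometric sum: the natural move is to optimise its Chernoff bound in $\theta$ directly, which is messy, whereas routing through $e^{\theta}(1-\theta)\le1$ makes the remaining algebra completely rigid. The binomial estimates are mechanical once reduced, via $1+u\le e^u$, to the Poisson computations already in Lemma~\ref{chern}.
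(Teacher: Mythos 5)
Your proposal is correct, and the two binomial estimates are obtained exactly as in the paper (Chernoff bound, $1+u\le e^u$ to pass to the Poisson m.g.f., the substitutions $e^{\theta}=r$ and $e^{-\theta}=1-\delta$, and the elementary inequality $(1-\delta)\log(1-\delta)+\delta\ge\delta^2/2$). Like the paper, you implicitly restrict the upper-tail bound to $r\ge1$ — which you correctly flag as the only regime where the stated inequality can hold — so there is no gap there. Where you genuinely diverge is the geometric sum. The paper optimises the exact geometric Chernoff bound in $\theta$ (arriving at $1-pe^{\theta}=m/x$), and then needs a separate convexity argument on $f(x)=x\log x$ to show that the resulting exponent $p\,f(1+\delta/p)-f(1+\delta)$ dominates $(1-p)\big(\delta-\log(1+\delta)\big)$; the $p$-dependence is eliminated only at the end. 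You instead dominate the geometric m.g.f.\ by that of an exponential with the same mean, via
\begin{equation*}
\frac{(1-p)e^{\theta}}{1-pe^{\theta}}\le\frac{1}{1-\theta/(1-p)}\quad\text{for }0\le\theta<1-p,
\end{equation*}
which reduces to $e^{\theta}(1-\theta)\le1$, and then the choice $\theta=(1-p)\delta/(1+\delta)$ gives the bound by direct substitution. Your route is shorter and makes the $p$-uniformity of the rate $\delta-\log(1+\delta)$ transparent from the outset (it is the Cram\'er rate for exponentials); the paper's route is more mechanical but requires the extra convexity step. Both yield the identical bound, so either proof serves the later applications (Lemma~\ref{stepchain3} and Lemma~\ref{deltay}) equally well.
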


\begin{proof}
If $X \sim \textrm{binomial}(n,p)$ then $\mathbb{E}e^{\theta X} = ((1-p)+pe^{\theta})^n = (1+p(e^{\theta}-1))^n \leq e^{np(e^{\theta}-1)}$ so using Markov's inequality, for $\theta\geq 0$, $\mathbb{P}(X>x) \leq e^{-\theta x}\mathbb{E}e^{\theta X} \leq e^{-\theta x+ np(e^{\theta}-1)}$, and setting $x=npr$ with $r=e^{\theta}$ gives
\begin{equation*}
\mathbb{P}(X > x) \leq e^{-x\log r + x - x/r}
\end{equation*}
from which the first estimate follows.  For the lower bound, note for $\theta\geq 0$, $\mathbb{E}e^{-\theta X} \leq e^{np(e^{-\theta}-1)}$ so $\mathbb{P}(X<x) = \mathbb{P}(e^{-\theta X} > e^{-\theta x}) \leq e^{\theta x + np(e^{-\theta}-1)}$.  Setting $x = np(1-\delta)$ with $1-\delta = e^{-\theta}$, the exponent is $-np(1-\delta)\log(1-\delta)-np\delta$, and since $\log(1-\delta) \geq -\delta-\delta^2/2$ for $\delta \leq 1$, $(1-\delta)\log(1-\delta)+ \delta \geq \delta^2/2 + \delta^3/2 \geq \delta^2/2$ and the second estimate follows.\\

If $X \sim \textrm{geometric}(p)$ then $\mathbb{E}e^{\theta X} = (1-p)e^{\theta}/(1-pe^{\theta})$, so if $X_i \sim \textrm{geometric}(p)$ are independent, $i=1,...,m$ and $S_m = X_1 + ... + X_m$ then $\mathbb{E}e^{\theta S_m} = [(1-p)e^{\theta}/(1-pe^{\theta})]^m$, and
\begin{equation*}
\mathbb{P}(S_m > x) \leq e^{-\theta(x-m) + m(\log(1-p) - \log(1-pe^{\theta})}
\end{equation*}
and optimizing in $\theta$ gives $(1-pe^{\theta}) = m/x$ and $\theta = \log((1/p)(1-m/x))$.  Setting $x=(1+\delta)m/(1-p)$ gives $\theta = \log(1+\delta/p) - \log(1+\delta)$ and $\log(1-pe^{\theta}) = \log(1-p)-\log(1+\delta)$ and the exponent $-\log(1+\delta/p)(x-m) +\log(1+\delta)x$ which since $x-m = (\delta+p)m/(1-p)$ is equal to $(-m/(1-p))(p(1+\delta/p)\log(1+\delta/p) - (1+\delta)\log(1+\delta))$.  Now, the function $f(x) := x\log x$ has $f'(x) = 1+\log x$ which increases with $x$, so $f(1+\delta/p)-f(1+\delta) \geq (1/p-1)\delta f'(1+\delta) = (1/p-1)\delta (1+\log (1+\delta))$ and so
\begin{eqnarray*}
p(1+\delta/p)\log(1+\delta/p) - (1+\delta)\log(1+\delta) &=& pf(1+\delta/p)-f(1+\delta)\\
&=& p(f(1+\delta/p)-f(1+\delta)) + (p-1)f(1+\delta)\\
& \geq & (1-p)[\delta (1+\log(1+\delta)) - (1+\delta)\log(1+\delta)]\\
&=& (1-p)[\delta - \log(1+\delta)]
\end{eqnarray*}
and the desired estimate follows.
\end{proof}

\begin{lemma}\label{stepchain3}
Let $T_0=0$ and for $n=1,2,...$ let $T_n = \inf\{m>T_{n-1}:K_m=1\}$.  If $K_0=1$ then there is $C>0$ so that with good probability in $n$,
\begin{equation*}
\sum_{i=0}^{T_n-1}K_i \leq Cn
\end{equation*}
\end{lemma}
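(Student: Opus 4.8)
The plan is to split the trajectory of $K$ into independent excursions away from state $1$, reduce the claim to a large-deviation estimate for a sum of i.i.d.\ random variables, and then dispose of that estimate by a truncation argument.

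First I would note that since $K$ has a strong downward drift for large $k$ (indeed $p_{kk-1}/p_{kk+1}=e^{ck}$ with $c>0$), it is positive recurrent, so each $T_n$ is a.s.\ finite. Because $p_{12}=1$, at each time $T_{m-1}$ the chain sits at state $1$ and moves to state $2$ one step later; hence by the strong Markov property the blocks
\[
A_m:=\sum_{i=T_{m-1}}^{T_m-1}K_i=1+\sum_{i=T_{m-1}+1}^{T_m-1}K_i,\qquad m=1,\dots,n,
\]
are i.i.d., with $A_m=1+U^{(m)}$ where $U^{(m)}$ is an independent copy of the quantity $U(2)$ from Lemma~\ref{stepchain2} (the ``$1$'' is the visit to state $1$ at time $T_{m-1}$, after which the chain is at $2$, which is exactly the initial condition in Lemma~\ref{stepchain2} with $j=2$). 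Consequently $\sum_{i=0}^{T_n-1}K_i=n+\sum_{m=1}^n U^{(m)}$, and it suffices to prove $\sum_{m=1}^n U^{(m)}\le C'n$ with good probability for suitable $C'>0$.

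Next I would convert Lemma~\ref{stepchain2} (with $j=2$) into a clean tail bound: there are $c_1,x_0>0$ with $\mathbb{P}(U(2)>x)\le 3e^{-c_1x^{1/3}}$ for all $x\ge x_0$. This immediately gives $\mu:=\mathbb{E}[U(2)]=\int_0^\infty\mathbb{P}(U(2)>x)\,dx<\infty$. Now fix the truncation level $L:=(\log n)^6$, so that $L^{1/3}=(\log n)^2$. A union bound gives
\[
\mathbb{P}\bigl(\exists\,m\le n:\ U^{(m)}>L\bigr)\le 3n\,e^{-c_1(\log n)^2}\le e^{-(c_1/2)(\log n)^2}
\]
for $n$ large, i.e.\ with good probability every excursion area is at most $1+L$. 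On that event each $U^{(m)}$ equals its truncation $U^{(m)}\wedge L\in[0,L]$, and the $U^{(m)}\wedge L$ are i.i.d.\ with mean $\le\mu$; a standard Chernoff/Hoeffding bound for bounded i.i.d.\ summands then yields
\[
\mathbb{P}\Bigl(\textstyle\sum_{m\le n}(U^{(m)}\wedge L)>(\mu+1)n\Bigr)\le\exp\bigl(-c_2 n/L^2\bigr)=\exp\bigl(-c_2 n/(\log n)^{12}\bigr),
\]
which is far smaller than $e^{-\gamma(\log n)^2}$ for every $\gamma>0$ once $n$ is large. Intersecting the two good-probability events and taking $C:=\mu+2$ gives $\sum_{i=0}^{T_n-1}K_i=n+\sum_m U^{(m)}\le n+(\mu+1)n=Cn$ with good probability.

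The only genuine subtlety — and the reason the lemma is stated with ``good probability'' rather than ``whp'' — is that $U(2)$ has merely a stretched-exponential tail (exponent $1/3$), so $\sum_m U^{(m)}$ is not sub-exponential and a naive exponential-moment bound cannot produce $e^{-\gamma n}$ concentration; the dominant deviation comes from a single atypically large excursion. Truncating at $L=(\log n)^6$ is the natural compromise: it is large enough that the maximal excursion exceeds it only with probability $e^{-\gamma(\log n)^2}$, yet small enough that the concentration bound for the truncated sum still beats $e^{-\gamma(\log n)^2}$; any $L$ with $(\log n)^6\le L\le n^{1/2-o(1)}$ would serve equally well. Everything else is routine bookkeeping with the estimates already assembled in Section~\ref{secprop}.
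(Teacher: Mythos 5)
Your proof is correct, but it takes a genuinely different route from the paper's. You treat each excursion area as a single i.i.d.\ random variable $U^{(m)}\sim U(2)$, extract a stretched-exponential tail $\mathbb{P}(U(2)>x)\le 3e^{-c_1x^{1/3}}$ from Lemma~\ref{stepchain2}, and then conclude by truncation at $L=(\log n)^6$ plus Hoeffding for the truncated sum; the two error terms $ne^{-c_1(\log n)^2}$ and $e^{-c_2n/(\log n)^{12}}$ both fit inside ``good probability,'' and your diagnosis that a single large excursion is the dominant deviation (hence only good probability, not whp) is exactly right. The paper instead redoes a level-by-level analysis across all $n$ excursions simultaneously: it shows the number $V(k)$ of excursions reaching level $k$ is binomial$(n,u(2,k))$, deduces via Lemma~\ref{largedev} that with good probability no excursion reaches a level above $\lceil\log n\rceil$ and that levels $k\le\lceil\log n\rceil$ receive at most $Cnk^{-3}$ visits in total, and then sums geometric occupation times at each level. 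Your argument is shorter and more modular, since it uses Lemma~\ref{stepchain2} as a black box rather than reproving its internals in aggregate form. What the paper's longer route buys is the intermediate quantitative facts themselves --- the cap $\lceil\log n\rceil$ on the maximal level reached and the $Cnk^{-3}$ occupation bounds --- which are explicitly reused later in the proof of Lemma~\ref{deltay} to control the times $t_i-t_{i-1}$ level by level; your proof of the present lemma is complete as stated, but one would still need the paper's level-by-level estimates (or a substitute) for that later application.
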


\begin{proof}
Let $J=\{j \in \{1,...,n\}:K_{T_{j-1}+1}=2\}$ be the excursions where the first move is up and for $k>1$ let $U_j(2,k) = \sum_{i=T_{j-1}+1}^{T_j-1}\mathbf{1}(K_i=k)$, then
\begin{equation*}
\sum_{i=0}^{T_n-1}K_i = n + \sum_{j \in J,k>1}k U_j(2,k)
\end{equation*}
and for fixed $k$, $\{U_j(2,k):j \in J\}$ are independent and distributed like $U(2,k)$ from Lemma \ref{stepchain}.  Letting $J(k)= \{j \in J:U_j(2,k)>0\}$ be the excursions that reach level $k$, and letting $V(k) = |J(k)$, since $\mathbb{P}(U_j(2,k)>0) = u(2,k)$ and by independence of successive excursions, $V(k) \sim \textrm{binomial}(J,u(2,k))$ and
\begin{equation*}
\sum_{i=0}^{T_n-1}K_i = n + \sum_{k>1}k\sum_{j \in J(k)}U_j(2,k)
\end{equation*}
and for fixed $k$, $\{U_j(2,k):j \in J(k)\}$ are independent and distributed like $\{U(2,k) \mid U(2,k)>0\}$, which has $\mathbb{P}(U(2,k)>d \mid U(2,k)>0) = u(k,k)^d$ i.e., is geometric with parameter $u(k,k)$.  Since $J\leq n$ and
\begin{equation*}
u(2,k) \leq e^{-(c/2)(k(k-1)-2)} \leq e^{-(c/2)((k/2)^2)} = e^{-ck^2/8}
\end{equation*}
it follows that $V(k)$ is at most binomial$(n,p)$ with $p=e^{-ck^2/8}$.  Recalling Lemma \ref{largedev}, to find the values $k$ such that $V(k)=0$ with good probability set $x = 1/2$ so that $r = x/(np) \geq e^{ck^2/8}/(2n)$ and 
\begin{equation*}
\mathbb{P}(V(k) > 0) \leq e^{-(1/2)[ck^2/8 - \log(2ne)]} = (2ne)^{1/2}e^{-ck^2/16}
\end{equation*}
which if $k =\lceil\log n\rceil + k'$ is at most $(2ne)^{1/2}e^{-c(\log n + k')^2/16} \leq (2e)^{1/2}n^{1/2-(c/16)\log n}e^{-ck'^2/16}$ and since $(2e)^{1/2}\sum_{k'\geq 1}e^{-ck'^2/16}<\infty$, for $C>0$ large enough,
\begin{equation*}
\mathbb{P}(V(k)=0 \textrm{ for all }k>\lceil\log n\rceil) \geq 1-Cn^{1/2-(c/16)\log n}
\end{equation*}
which is at most $Cn^{-(c/32)\log n}$, for some possibly larger $C>0$.  For $k=2,...,\lceil \log n \rceil$ and $C>0$ let $x = Cnk^{-3}$, then $r =x/(np) \geq Ck^{-3}e^{ck^2/8}$ and
\begin{equation*}
\mathbb{P}(V(k)>Cnk^{-3}) \leq e^{-nk^{-3}(ck^2/8-3\log k + \log (C/e))}
\end{equation*}
which is at most $e^{-(c/16)n/k}$ for every $k\geq 2$ provided $C$ is taken large enough that $3\log k-\log(C/e) \leq ck^2/16$ uniformly for $k\geq 2$.  Since $k\leq \log n+1 = \log(ne)$, $e^{-(c/16)n/k} \leq e^{-(c/16)n/(\log (ne))}$ and after summing and taking the complement we find
\begin{equation*}
\mathbb{P}(V(k) \leq Cnk^{-3} \textrm{ for }k = 2,...,\lceil\log n\rceil) \geq 1-\log(n)e^{-(c/16)n/(\log(ne))}
\end{equation*}
It remains to control the sums $\sum_{j \in J(k)}U_j(2,k)$.  If $j\in J(k)$ then $U_j(2,k)>0$ and as noted above, $\{U_j(2,k) \mid U_j(2,k)>0\}$ is geometric with parameter $p = u(k,k)$, that is, $\mathbb{P}(U_j(2,k)=d \mid U_j(2,k)>0) = p^{d-1}(1-p)$.  Recalling Lemma \ref{largedev} and setting $\delta=1/2$ while noting $1/2-\log(3/2) \geq 1/16$,
\begin{equation*}
\mathbb{P}(S_m > (3/2)m/(1-p)) \leq e^{-m/16}
\end{equation*}
Thus, on the event that $V(k) \leq Cnk^{-3}$ for each $k \in \{2,...,\lceil\log n\rceil\}$, using the above estimate for each $k$ with $m=Cnk^{-3} \geq Cn/(3\log ne)$ and recalling that $p=u(k,k)$, we find
\begin{equation*}
\mathbb{P}(\sum_{j \in J(k)}U_j(2,k) \leq (3/2)Cnk^{-3}/(1-u(k,k)) \textrm{ for }k=2,...,\lceil\log n\rceil) \geq 1-\log (n) e^{-Cn/(48\log ne)}
\end{equation*}
If $k>1$ then $u(k,k)<1$, and from the proof of Lemma \ref{stepchain2} we have $u(k,k) \leq 2e^{-ck/3}$, so $\sum_{k>1}k^{-2}/(1-u(k,k))<\infty$.  Summarizing, for some $C>0$ the events considered occur together with probability $\geq 1- Cn^{-(c/32)\log n}$, and when they occur,
\begin{equation*}
\sum_{i=0}^{T_n-1} K_i = n + \sum_{j \in J,k>1}kU_j(2,k) \leq n\big{(}1+(3/2)C\sum_{k=2}^{\lceil\log n\rceil}k^{-2}/(1-u(k,k))\big{)} \leq Cn
\end{equation*}
for some possibly larger $C>0$ that does not depend on $n$.
\end{proof}

\begin{lemma}\label{deltay}
Let $\delta y_t = y_t-y^*$ and let $T = \inf\{t:\delta y_t \leq 1/\sqrt{N}\}$, then
\begin{itemize}
\item there is $C>0$ so that with if $N$ large enough, with good probability in $t$, if $|\delta y_0| \leq 1/\sqrt{N}$ then
\begin{equation*}
\int_0^t |\delta y_s| ds \leq Ct /\sqrt{N}
\end{equation*}
\item for each $c>0$, with good probability in $N$, if $|\delta y_0|\leq 1/\sqrt{N}$ then $|\delta y_t|\leq c\log N/\sqrt{N}$ for a long time, while if $|\delta y_0|\leq c\log N/\sqrt{N}$ then $|\delta y_t| \leq 2c\log N/\sqrt{N}$ for a long time, and
\item there is $C'>0$ so that for each $C>0$, if $|\delta y_0| \leq C\log N / \sqrt{N}$ then with good probability in $N$,
\begin{equation*}
\int_0^T |\delta y_t|dt \leq C'(\log N)^6/\sqrt{N}
\end{equation*}
\end{itemize}
\end{lemma}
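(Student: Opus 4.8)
All three parts reduce to one construction: a coupling between the \emph{level process} of $\delta y$ — the sequence of multiples of $1/\sqrt N$ it visits, with consecutive repeats merged — and the chain $K_n$ of Lemma~\ref{stepchain}, after which Lemmas~\ref{stepchain2} and \ref{stepchain3} do the work. I first reduce everything to the event $\{|\delta y_t|\le\epsilon\text{ for all }t\le T\}$ for a small fixed $\epsilon$ (here $T$ is whichever stopping time is relevant): replace $\delta y$ by the process stopped at $T_\epsilon:=\inf\{t:|\delta y_t|>\epsilon\}$, so that the computation in the proof of Lemma~\ref{ysquish} applies along the whole trajectory — $\delta y$ moves on a lattice of spacing $2/N$ at total rate $\Theta(N)$, with drift $b(\delta y)\in[-C\delta y,-c\delta y]$ for $\delta y\in[1/\sqrt N,\epsilon]$, and symmetrically for $\delta y<0$. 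One then checks $T_\epsilon>T$ w.g.p.: by the coupling below, $|\delta y|$ can exceed $\epsilon$ before $T$ only if the level chain reaches level $\approx\epsilon\sqrt N$, which by $u(j,k)\le je^{-c(\binom k2-\binom j2)}$ of Lemma~\ref{stepchain} has probability $e^{-\Theta(N)}$ starting from any level $\le C\log N+1$, and a crude union bound extends this over any long time window.

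\textbf{The coupling.} For $k\ge1$ put $I_k=\big((k-1)/\sqrt N,\,k/\sqrt N\big)$, a lattice interval with $M:=\lfloor\sqrt N/2\rfloor$ interior sites. While $\delta y\in I_k$ the (constant-rate) walk is biased with per-step down-to-up ratio pinched between $1+c'k/\sqrt N$ and $1+C'k/\sqrt N$ (from the bounds on $ur,dr$ in the proof of Lemma~\ref{ysquish}, using $\delta y\asymp k/\sqrt N$ on $I_k$). Since $(1+a/\sqrt N)^M=e^{a/2}(1+o(1))$ uniformly for $a\le C\log N$, Lemma~\ref{gambler}'s exit-probability formula together with monotone couplings give: a passage through $I_k$ entered from its $k/\sqrt N$ end exits at $(k-1)/\sqrt N$ with probability $\asymp k/\sqrt N$, a passage through $I_{k+1}$ entered from its $k/\sqrt N$ end exits at $(k+1)/\sqrt N$ with probability $\asymp(k/\sqrt N)e^{-\Theta(k)}$, and every other passage returns to its entry boundary. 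Collapsing the returning passages, the level process is a Markov chain with $\mathbb P(k\to k-1)/\mathbb P(k\to k+1)\asymp e^{\Theta(k)}$, so it is stochastically sandwiched between two copies of $K_n$ with parameters $0<c_0<C_0$. Consequently, started from $\delta y_0\in I_{k_0}$ with $k_0\le C\log N+1$ and run to $T=\inf\{t:\delta y_t\le1/\sqrt N\}$, the number of records of each value $k$ before $T$ is dominated by $U(k_0,k)$ for the upper copy; and each ``sojourn at level $k$'' (the time between consecutive distinct records $k$ and $k\pm1$) is an exit of $\delta y$ from an interval of $2M+1=\Theta(\sqrt N)$ sites at rate $\Theta(N)$, hence has duration $O(1)$ with the usual $2^{-m}$ tail of Lemma~\ref{gambler}.

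\textbf{Third part, then second.} Since $\delta y>0$ on $[0,T)$, $\int_0^T|\delta y_t|\,dt=\sum_k\int_0^T\mathbf 1(\delta y_t\in I_k)\,dt\le\sum_k\frac{k}{\sqrt N}\,\tau_k$, where $\tau_k$ is the occupation time of $I_k$ before $T$. Every visit to $I_k$ occurs during a sojourn at level $k-1$ or $k$, so $\tau_k\le O(1)\big(U(k_0,k-1)+U(k_0,k)\big)$ after a routine concentration step for the sum of the $O(1)$ sojourn durations (w.g.p., using the $2^{-m}$ tails via Lemma~\ref{largedev} and the fact that only $O(\log N)$ levels are ever visited). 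Summing and using $\sum_k k\,U(k_0,k)=U(k_0)$ in the notation of Lemma~\ref{stepchain2} gives $\int_0^T|\delta y_t|\,dt\le\frac C{\sqrt N}U(k_0)$; taking $d=\lceil(\log N)^2\rceil$ (so $1-3e^{-2cd/3}$ is w.g.p.) yields $U(k_0)\le k_0^2d+k_0d^2+d^3=O((\log N)^6)$, hence $\int_0^T|\delta y_t|\,dt\le C'(\log N)^6/\sqrt N$. The second part is the confinement estimate of the first paragraph read the other way: from level $\le1$ (resp.\ $\le c\log N$) the level chain reaches level $c\log N$ (resp.\ $2c\log N$) within $e^{\gamma(\log N)^2}$ excursions with probability $\le e^{-\gamma'(\log N)^2}$, with $\gamma'>\gamma$ once $c$ is large, by $u(j,k)\le je^{-c(\binom k2-\binom j2)}$ and a union bound over excursions.

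\textbf{First part, and the main obstacle.} Applying the same drift computation to $|\delta y_t|$ (with a correction supported on $\{|\delta y|\le2/N\}$ and of size $O(1)$ there) gives $c\int_0^t|\delta y_s|\,ds\le|\delta y_0|+M_t+O(1)\cdot(\text{time }\delta y\text{ spends within }2/N\text{ of }0\text{ before }t)$, where $M_t$ is a martingale with jumps $2/N$ and $[M]_t=\Theta(t/N)$. Decomposing $[0,t]$ into its $O(t)$ level-chain excursions (each of $O(1)$ duration) and the complementary level-$1$ intervals, the last term and the excursion contributions are $O(t/\sqrt N)$ w.g.p.\ in $t$ by Lemma~\ref{stepchain3} applied with $n=\Theta(t)$, and $M_t=O(\sqrt{t/N}\,\log t)=O(t/\sqrt N)$ w.g.p.\ in $t$ by the large-deviation estimates of Lemmas~\ref{chern}--\ref{largedev} on the jump counts; so $\int_0^t|\delta y_s|\,ds\le Ct/\sqrt N$. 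The hard part is making the coupling rigorous uniformly in $k$ and $N$: the within-$I_k$ bias is not constant but only pinched between two constants (so one needs monotone couplings set up carefully at the entry point of each passage), one must check that the $(1+a/\sqrt N)^M\to e^{a/2}$ limit genuinely produces a transition ratio comparable to $e^{\Theta(k)}$ matching the $e^{ck}$ of Lemma~\ref{stepchain}, and one must track the physical time consumed by the returning passages that are invisible to the level chain. Everything downstream is then a mechanical assembly of Lemmas~\ref{chern}, \ref{gambler}, \ref{largedev}, \ref{stepchain2} and \ref{stepchain3}.
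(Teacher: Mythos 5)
Your proposal is correct and follows essentially the same route as the paper: band $\delta y$ into intervals of width $1/\sqrt N$, dominate it by a process whose per-band down/up ratio is constant so that Lemma~\ref{gambler} makes the level chain an exact copy of (a chain dominated by) $K_n$, then feed the occupation counts into Lemma~\ref{stepchain2} with $d=(\log N)^2$ for the third part, Lemma~\ref{stepchain3} with $n=\Theta(t)$ for the first, and the $u(j,k)$ tail plus a union bound over excursions for the second. The ``main obstacle'' you flag is resolved in the paper exactly as you suggest, by replacing the non-constant bias with its worst case on each band (the process $Z_t$), so the gambler's-ruin formula applies verbatim and yields $p_{kk-1}/p_{kk+1}=(1+ck/(2\sqrt N))^{M}\ge e^{ck}$.
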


\begin{proof}
As in the proof of Lemma \ref{ysquish} we use $c$ and $C$ that may change from step to step, and we may assume that $|\delta y_t|\leq \epsilon$ for small $\epsilon>0$.  Recall $Y_t := Ny^*$ and define $\delta Y_t := Y_t - \lfloor Ny^* \rfloor$ whose range is a subset of $\{x \in \mathbb{Z}:|x|\leq N\}$.  Notice that $\delta y -\delta Y/N = \lfloor Ny^* \rfloor/N-y^* = O(1/N)$.  Since $\sqrt{N}\delta y - \delta Y/\sqrt{N} = O(1/\sqrt{N})$, it is enough to show that for some $C>0$, with good probability $\int_0^t |\delta Y_s|ds\leq Ct\sqrt{N}$.  Recalling $ur$ and $dr$ from the proof of Lemma \ref{ysquish}, since $\delta y_t-\delta Y_t/N = O(1/N)$ it follows that for some $c>0$ and $N$ large enough, if $|\delta Y|\geq \sqrt{N}$, then for $\delta Y>0$, $dr - ur \geq c\delta Y/N$ and for $\delta Y <0$, $ur-dr \geq c|\delta Y|/N$.  Since $\max(ur,dr)\leq C$, $dr/ur = (dr-ur)/ur+1 \geq 1+(c/C)\delta Y/N$ when $\delta Y>0$ and similarly $ur/dr \geq 1+(c/C)|\delta Y|/N$ when $\delta Y<0$.  Since $q_+ := Nur$ is the rate of upward transitions and $q_- := Ndr$ is the rate of downward transitions in $\delta Y$, it follows that $q_-/q_+ \geq 1+c\delta Y/N$ when $\delta Y>0$ and $q_+/q_- \geq 1+c|\delta Y|/N$ when $\delta Y<0$.  Letting $M = \lceil\sqrt{N}\rceil$, a simple coupling argument then shows that $|\delta Y_t|$ is stochastically dominated by a continuous time Markov chain $X_t$ on $\{M,M+1,...\}$ with $X_0 = \max(\delta Y_0,M)$, having the same rate of upward transitions as $\delta Y_t$ and satisfying $q_-(M)=0$ and $q_-(x)/q_+(x) = 1+cx/N$ for $x>M$.\\

Let $S=\{jM:j\in \mathbb{Z}\}$ and define a process $Z_t$ on $\{M,M+1,...\}$ as follows.  Let $t_0=\inf\{t>0:Z_t \in S\}$ and define recursively $t_i = \inf\{i>t_{i-1}:Z_t \in \{Z_{t_{i-1}}\pm M\}\}$, and let $K_i = Z_{t_i}/M$, then let $Z_t$ have the same rate of upward transitions as $X_t$ and for $t_{i-1}\leq t <t_i$, have downward transitions with rates satisfying
\begin{equation*}
q_-/q_+ = \begin{cases}1 & \textrm{ if } K_i = 1 \\ 1 + cK_i/(2\sqrt{N}) & \textrm{ if } K_i>1 \end{cases}
\end{equation*}
Comparing the rates one verifies that $X_t$ is stochastically dominated by $Z_t$.  Notice that $K_i$ is a discrete time Markov chain on the state space $\{1,2,...\}$ and satisfies $p_{12}=1$ and $p_{kk-1}+p_{kk+1}=1$ for $k>1$.  Since its transition rates depend on values possibly several steps back in time, $Z_t$ is not, globally in time, a Markov chain, but on time intervals of the form $t \in [0,t_i-t_{i-1})$, the process $\tilde{Z}_t^{(i)} = Z_{t_{i-1}+t}$ is a continuous time Markov chain.  Since $Z_t$ dominates $\delta Y_t$ it is enough to show $\int_0^t Z_sds \leq Ct\sqrt{N}$ with good probability in $t$.  To tackle this integral we break it up as follows: if $t_{n-1} \leq t <t_n$ then 
\begin{equation}\label{flucsum}
\frac{1}{t}\int_0^t Z_s ds \leq \frac{1}{t_{n-1}}\sum_{i=1}^n(t_i-t_{i-1})(K_{i-1}+1)M
\end{equation}
Next we examine what happens on time intervals $[t_{n-1},t_n)$.  For $n>0$, letting $s_0=0$ and $s_i = \inf\{t>s_{i-1}:\tilde{Z}^{(n)}_t \neq \tilde{Z}^{(n)}_{s_{i-1}}\}$ be the jump times of $\tilde{Z}^{(n)}_t$ and defining $\tilde{Z}^{(n)}_i= \tilde{Z}^{(n)}_{s_i}$, as shown in \cite{norris}, $\tilde{Z}^{(n)}_i$ is a discrete time Markov chain with jump probabilities $p_+ = q_+/(q_-+q_+)$ of going up and $p_- = q_-/(q_-+q_+)$ of going down so that $p_-/p_+ = q_-/q_+$, and the random variables $\{s_i-s_{i-1}: n\geq 0\}$ are independent and exponentially distributed with exponential rate $q_++q_-$.  In our case, $cN \leq q_++q_- \leq CN$ and $p_-/p_+ = 1$ if $K_{n-1}=1$ and $=1+cK_{n-1}/(2\sqrt{N})$ if $K_{n-1}>1$, which is constant for fixed $K_{n-1}$ and fixed $N$.  Therefore, if $k >1$ then using the transition probabilities for $\tilde{Z}^{(n)}_i$ and Lemma \ref{gambler}, the transition probability $p_{kk+1}$ for $K_i$ satisfies
\begin{equation*}
p_{kk+1} = \frac{1+\sum_{n=1}^{M-1}(1+ck/(2\sqrt{N}))^n}{1+\sum_{n=1}^{2M-1}(1+ck/(2\sqrt{N}))^n} = \frac{(1+ck/(2\sqrt{N}))^M-1}{(1+ck/(2\sqrt{N}))^{2M}-1} = \frac{1}{(1+ck/(2\sqrt{N}))^M+1}
\end{equation*}
since the denominator is a difference of squares.  Letting $x = (1+ck/(2\sqrt{N}))^M$, $p_{kk+1}=1/(x+1)$ so $p_{kk-1}=x/(x+1)$ and $p_{kk-1}/p_{kk+1}=x$, and since $M=\lceil\sqrt{N}\rceil$, $x\geq e^{ck}$ for some $c>0$ uniformly for $k \in \{2,...,\sqrt{N}\}$.  Therefore, $K_i$ is stochastically dominated by the corresponding chain with $p_{kk-1}/p_{kk+1} = e^{ck}$ defined in Lemma \ref{stepchain}, and so are the quantities $T_n-T_{n-1}$ and $U_j(2,k) = \sum_{i=T_{j-1}}^{T_j-1}\mathbf{1}(K_i=k)$ from the proof of Lemma \ref{stepchain3}.  Rewriting \eqref{flucsum} while focusing on the times $T_n$ gives, for $t_{T_{n-1}} \leq t < t_{T_n}$,
\begin{equation}\label{flucsum2}
\frac{1}{t}\int_0^t Z_s ds \leq \frac{1}{t_{T_{n-1}}}\sum_{k\geq 1}(k+1)\sum_{i=1}^{T_n}(t_i-t_{i-1})\mathbf{1}(K_{i-1} = k)M
\end{equation}
We next consider the values $t_i-t_{i-1}$.  Let $j_0$ be such that $s_{j_0} = t_i-t_{i-1}$.  If $K_{i-1}=1$, the process $\tilde{Z}^{(i)}_t$ is a symmetric simple random walk on $\{0,...,2M\}$ reflected to remain above $M$.  Using Lemma \ref{gambler} with $b=1$, for some $c>0$ and $M$ large enough it follows that
\begin{equation*}
\mathbb{P}(j_0 \geq cM^2) \geq 1/2
\end{equation*}
For other values of $K_{i-1}$, $\tilde{Z}^{(i)}_t$ is a biased random walk on $\{(i-1)M,...,(i+1)M\}$, so using again Lemma \ref{gambler}, for integer $m\geq 1$ and any value of $K_{i-1}$,
\begin{equation*}
\mathbb{P}(j_0\geq CmM^2) \leq 2^{-m}
\end{equation*}
If $K_{i-1}=1$ then since the rate of transitions in $\tilde{Z}^{(i)}_t$ is at most $CN$, using Lemma \ref{chern} with $\mu=cM^2/2$ and $\delta=1/2$, with probability $\geq 1-e^{-cM^2/32}$, at most $3cM^2/4$ transitions have occurred by time $cM^2/(2CN) \geq c/(2C)$, so combining with the above estimate on $j_0$, $\mathbb{P}(t_i-t_{i-1} \geq c/(2C)) \geq 1/2-e^{-cM^2/32}$ which is at least $1/4$ for $M$ large enough.  Since $K_{T_i}=1$ for each $i$, for some $c>0$, the cardinality of the set $\{i\leq n:t_{T_i+1}-t_{T_i}\geq c\}$ is at least $\textrm{binomial}(n,1/4)$.  Then, using Lemma \ref{largedev} with $\delta=1/2$ and the fact that $t_{T_n} \geq \sum_{i=0}^{n-1}t_{T_i+1}-t_{T_i}$,
\begin{equation*}
\mathbb{P}(t_{T_n} \geq cn/8) \geq 1 - e^{-n/32}
\end{equation*}
For any value of $K_{i-1}$, since the rate of transitions in $\tilde{Z}^{(i)}_t$ is at least $cN$, using Lemma \ref{chern} with $\mu = mCM^2$ and $\delta=1/2$ in the other direction, with probability $\geq 1-e^{-mCM^2/16}$, at least $mCM^2/2$ transitions have occurred by time $mCM^2/(cN) \leq mC$ for some possibly larger $C>0$, so $\mathbb{P}(t_i-t_{i-1} \geq mC) \leq 2^{-m}+e^{-mCM^2/16} \leq e^{-cm}$ for some $c>0$ which implies that $t_i-t_{i-1}$ is at most $C(1 + \textrm{geometric}(e^{-c}))$.\\

As shown in the proof of Lemma \ref{stepchain3}, for $i=1,...,T_n$, with good probability in $n$ there are no visits to levels $k>\log n+1$, and there are at most $Cnk^{-3}$ visits to levels $k \in\{2,...,\lceil \log n\rceil\}$.  Using Lemma \ref{largedev} and the above estimate on $t_i-t_{i-1}$, for $k \in \{2,...,\lceil \log n\rceil\}$ and noting $Cnk^{-3}\geq Cn/(3\log n)$, with good probability in $n$, $\sum_{i=1}^{T_n}(t_i-t_{i-1})\mathbf{1}(K_{i-1}=k) \leq C^2(1+(3/2)/(1-e^{-c}))nk^{-3}$ which is at most $Cnk^{-3}$ for some possibly larger $C>0$, and since $K_i=1$ exactly when $i=T_j$ for some $j$, with good probability $\sum_{i=1}^{T_n}(t_i-t_{i-1})\mathbf{1}(K_{i-1}=1) \leq Cn$.  Summing on $k$, with good probability in $n$, $\sum_{k\geq 1}(k+1)\sum_{i=1}^{T_n}(t_i-t_{i-1})\mathbf{1}(K_{i-1}=k) \leq Cn$ for some possibly larger $C>0$ not depending on $n$.  Combining this with the above estimate on $t_{T_n}$ and using \eqref{flucsum2} shows that with good probability in $n$, for $t_{T_{n-1}}\leq t < t_{T_n}$,
\begin{equation}\label{flucsum3}
\frac{1}{t}\int_0^t Z_s ds \leq \frac{8CnM}{c(n-1)} \leq C\sqrt{N}
\end{equation}
for some possibly larger $C>0$; this is nearly enough to prove the first statement but we move onto the second statement for now.  By Lemma \ref{stepchain}, the probability of visiting level $2c\log N$ on any excursion starting from level $c\log N$, is at most $e^{-c'(\log N)^2}$ for some $c'>0$, so with good probability in $N$, after $e^{c'(\log N)^2/2}$ excursions, level $2c\log N$ has still not been visited.  By the above estimate on $t_{T_n}$, with high probability in $e^{c'(\log N)^2/2} \geq N$ for $N$ large enough, this many excursions requires time at least $ce^{c'(\log N)^2/2}/8$, i.e., a long time with respect to $N$.\\

We now prove the third statement. If $Z_0 = \lceil C\log N \rceil M$, we have
\begin{equation*}
\int_0^{t_{T_1}} Z_s ds  \leq \sum_{i=0}^{T_1-1}(t_i-t_{i-1})(K_{i-1}+1)M
\end{equation*}
From the proof of Lemma \ref{stepchain2} it follows that with probability $ \geq 1 - e^{-cd}$ for some $c>0$ there are at most $d$ visits each to levels $2,...,C\log N + d/3$, and no visits to higher levels, before hitting level 1, in which case $T_1 \leq d(C\log N + d/3)$.  Letting $d = (\log N)^2$, $C\log N + d/3$ is at most $(\log N)^2$ for $N$ large enough which gives $T_1 \leq (\log N)^4$, and $e^{-cd} = e^{-c(\log N)^2}$.  Using a similar estimate as above for $\sum_{i=0}^{T_1-1}t_i-t_{i-1}$ and using $K_i +1\leq \log N+d/3+1 \leq (\log N)^2$, with good probability in $N$,
\begin{equation*}
\sum_{i=0}^{T_1-1}(t_i-t_{i-1})(K_{i-1}+1)M \leq C'M(\log N)^6 \leq C'(\log N)^6(\sqrt{N}+1)
\end{equation*}
since $M=\lceil \sqrt{N} \rceil$.  From this, the second statement in the lemma follows.\\

It remains to show \eqref{flucsum3} holds with good probability in $t$.  To do so, first note the random variables $\{T_n-T_{n-1}: n\geq 1\}$ are independent and identically distributed, and that $K_n$ is stochastically dominated by the Markov chain $L_n$ on $\{1,2,...\}$ with $p_{12}=1$, $p_{kk-1}+p_{kk+1}=1$ for $k>1$ and $p_{kk-1}/p_{kk+1} =e^{2c}$, $k\geq 2$,  so defining 
\begin{equation*}
T = \inf\{i>0:L_i = 1 \mid L_0=1\}
\end{equation*}
$T_n - T_{n-1}$ is dominated by $T$.  If $T>i$ then $L_i>1$ and $L_i = 1 + (2X-i)$ where $X \sim \textrm{binomial}(i,p)$ with $p = 1/(1+e^{2c})<1/2$, which implies $X>i/2$.  Using Lemma \ref{largedev} with $x = i/2$ gives $r = x/(ip) = 1/(2p)>1$, so letting $c = 1/r+\log(r/e)$, since $1/r+\log(r/e)$ is positive when $r>1$, $c_0>0$ so for $d \geq 1$, if $i_0$ is taken larger than $1$,
\begin{equation*}
\mathbb{P}(T>i) \leq e^{-ci/2}
\end{equation*}
so $T$ is at most $1 + Y$ where $Y\sim\textrm{geometric}(e^{-c/2})$.  Writing $T_n = \sum_{j=1}^n T_j-T_{j-1}$, taking $C\geq n[1+(1+\delta)/(1-e^{-ci/2})]$ and using again Lemma \ref{largedev} with $\delta=1/2$ while noting $1/2- \log(3/2)\geq 1/16$,
\begin{equation*}
\mathbb{P}(T_n \leq Cn) \geq 1-e^{-n/16}
\end{equation*}
Using the fact that $t_n-t_{n-1}$ is at most $C(1+\textrm{geometric}(e^{-c})$, a similar estimate as for $T_n$ shows that for $N$ large enough, with high probability in $n$, $t_n \leq Cn$, and combining these, with good probability in $n$, $t_{T_n} \leq t_{Cn} \leq Cn$, so if $t_{T_{n-1}}\leq t < t_{T_n}$ then $n \geq t_{T_n}/C > t/C$ and since $x\mapsto e^{-c(\log x)^2}$ is decreasing, $e^{-c(\log n)^2} \leq e^{-c(\log t - \log C)^2}$ which for $t$ large enough is at most $e^{-(c/2)(\log t)^2}$, so that if an event holds with good probability in $n$, then it holds with good probability in $t$.
\end{proof}

\section{Extinction Time}\label{secext}
We return to the partner model, and make a slight modification to the way we record its progress.  First define the three types $SSA_t,SIA_t$ and $IIA_t$, where the $A$ stands for anticipated, as follows.  If a partnership $xy$ is formed at time $t$, let $s>t$ be the first time of breakup of $xy$ after $t$, and at time $t$ record the state of $x$ and $y$ at breakup, and the duration $s-t$ of the partnership.  Then, $SSA_t$ is the number of partnerships at time $t$ that upon breakup will consist of two healthy individuals, and analogously for $SIA_t$ and $IIA_t$.\\

The choice of variables $SSA_t,SIA_t,IIA_t$ may seem unusual, as they are not adapted to the natural filtration $\{\mathcal{F}_t:t\geq 0\}$ of the Poisson point processes that determine the transitions in the model.  However, consider the following modification.  Attach an independent uniform random variable at each partnering event, then use this variable to sample the joint distribution of the final state and duration of the partnership conditioned on its initial state.  This modification preserves the sample path distribution of $(S_t,I_t,SSA_t,SIA_t,IIA_t)$, and the modified process is adapted.  Although we do not make use of this fact, we note the process can be made Markov by introducing ``countdown'' variables for the time of breakup of each partnership, that are incremented at formation, and decrease linearly in time with slope one until breakup occurs.\\

Let $TI_t = I_t + SIA_t + 2IIA_t$ (the $TI$ stands for ``total infectious''), which is the analogue of $|V_t|$ for these new variables.  Since the $SIA\rightarrow S+I$ and $IIA\rightarrow I+I$ transitions leave $TI_t$ unchanged, the only transitions affecting $TI_t$ are the ones affecting infectious singles.  Also, since the final state of a partnership is decided at the moment of partnership formation and recorded in the variables $SSA_t,SIA_t,IIA_t$, the corresponding change in $TI_t$ is felt immediately.  There are three types of transition affecting single infectious:  $I\rightarrow S$, $S+I\rightarrow SI$ and $I+I\rightarrow II$, occurring at respective rates $I$, $(r_+(y-i))I$ and $(r_+i/2)I$.  The second and third type of transition are followed by an immediate transition $SI\rightarrow SSA,SIA$ or $IIA$, and $II\rightarrow SSA,SIA$ or $IIA$, with a probability determined by the Markov chain from Figure \ref{figri}.  At each transition, $TI_t$ can increase by 1, stay the same, or decrease by 1 or 2.  The total rate of transitions affecting $TI_t$ is equal to $I(1 + r_+(y-i)+r_+i/2)$ which we break up into its \emph{principal} part $I(1+r_+(\min(y,y^*)-i) + r_+i/2)$ and its \emph{auxiliary} part $r_+I\max(y-y^*,0) = r_+I\max(\delta y,0)$.\\

First we consider the principal part.  Define $z = 1 + r_+(\min(y,y^*)-i)+r_+i/2$ so that the principal part is $Iz$, and let $p_S = 1/z$, $p_{SI} = r_+(\min(y,y^*)-i)/z$ and $p_{II} = r_+i/(2z)$.  If $y<y^*$ then $z$ increases with $y$ so $p_S$ and $p_{II}$ decrease with $y$, and since $p_S+p_{SI}+p_{II}=1$, $p_{SI}$ increases with $y$.  Referring to the Markov chain $(X_t)_{t\geq 0}$ whose transition rates are depicted in Figure \ref{figri}, recall $\tau = \inf\{t \mid X_t \in \{D,E,F,G\}\}$, and use $\{A \rightarrow F\}$ to denote the event $\{X_{\tau}=F \mid X_0 = A\}$ and similarly for other states.
\begin{itemize}
\item $TI \rightarrow TI+1$ at rate $q_{xx+1} = Izp_{SI}\mathbb{P}(B\rightarrow G)$,
\item $TI \rightarrow TI-1$ at rate $q_{xx-1} = Iz(p_S + p_{SI}\mathbb{P}(B\rightarrow E) + p_{II}\mathbb{P}(C\rightarrow F) + O(1/N))$, and
\item $TI \rightarrow TI-2$ at rate $q_{xx-2} = Iz(p_{II}\mathbb{P}(C\rightarrow E) + O(1/N))$.
\end{itemize}
We take a moment to establish a fact that will be useful a bit later on.
\begin{lemma}\label{lemdrift}
There are $\epsilon,c>0$ so that if $\log N/N \leq i \leq \epsilon$ then for $N$ large enough,
\begin{equation}\label{increst}
q_{xx+1}-q_{xx-1}-2q_{xx-2} \leq -ci(q_{xx+1}+q_{xx-1}+q_{xx-2})
\end{equation}
\end{lemma}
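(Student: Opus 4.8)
The plan is to compute directly the drift $D := q_{xx+1}-q_{xx-1}-2q_{xx-2}$ and the total rate $Q := q_{xx+1}+q_{xx-1}+q_{xx-2}$, show that for $i \ge \log N/N$ and $N$ large one has $D \le -\tfrac{\kappa}{2}Ni^2$ for a fixed $\kappa>0$, and separately that $Q \le C_1 Ni$ for a fixed $C_1$ whenever $i \le \epsilon$; then \eqref{increst} follows with $c = \kappa/(2C_1)$ since $D \le -\tfrac{\kappa}{2}Ni^2 \le -\tfrac{\kappa}{2C_1}\,iQ$. No hypothesis on $y$ is needed: $y$ enters only through $z,p_S,p_{SI},p_{II}$ and through $\min(y,y^*)$, and the latter can only help.

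The crux is an algebraic identity forced by $R_0=1$. Write $\beta_E,\beta_F,\beta_G$ for $\mathbb{P}(B\to E),\mathbb{P}(B\to F),\mathbb{P}(B\to G)$ and $\gamma_E,\gamma_F$ for $\mathbb{P}(C\to E),\mathbb{P}(C\to F)$ in the chain of Figure~\ref{figri}. Since $E,F,G$ are the only absorbing states reachable from $B$ we have $\beta_E+\beta_F+\beta_G=1$, and since from $A$ one reaches $B$ before absorption with probability $r_+y^*/(1+r_+y^*)$ we have $R_0 = \tfrac{r_+y^*}{1+r_+y^*}(\beta_F+2\beta_G)$. Hence $R_0=1$ gives $\beta_F+2\beta_G = (1+r_+y^*)/(r_+y^*)$ and therefore
\[
\beta_G-\beta_E \;=\; (\beta_F+2\beta_G)-1 \;=\; \frac{1}{r_+y^*}.
\]
This is precisely the cancellation that removes the term of $D$ linear in $i$ and leaves a strictly negative drift of order $Ni^2$; the values of $\gamma_E,\gamma_F$ will not matter beyond $0\le\gamma_E,\gamma_F\le 1$.

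Substituting into $D$ and using $I=Ni$, $Izp_S=I$, $Izp_{SI}=r_+(\min(y,y^*)-i)I$, $Izp_{II}=\tfrac{r_+i}{2}I$ — and noting $\min(y,y^*)-i\ge 0$ when $i\le\epsilon<y^*$ (as $i\le y$ always) — one finds
\[
D = r_+(\min(y,y^*)-i)Ni(\beta_G-\beta_E) - Ni - \tfrac{r_+}{2}Ni^2(\gamma_F+2\gamma_E) + Iz\cdot O(1/N).
\]
Plugging in $\beta_G-\beta_E=1/(r_+y^*)$ and $\min(y,y^*)=y^*-\max(y^*-y,0)$, the first term equals $Ni - \tfrac{Ni^2}{y^*} - \tfrac{Ni\max(y^*-y,0)}{y^*}$, so $D = -\kappa Ni^2 - \tfrac{Ni\max(y^*-y,0)}{y^*} + Iz\cdot O(1/N)$ with $\kappa := \tfrac{1}{y^*}+\tfrac{r_+}{2}(\gamma_F+2\gamma_E)\ge 1/y^*>0$. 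Since $1\le z\le 1+r_+y^*$ for $i\le\epsilon<y^*$ and $I=Ni$, the $O(1/N)$ pieces — which arise only from replacing $\binom{I}{2}/N$ by $I^2/(2N)$ and so are genuinely $O(1/N)$ uniformly in $i,y$ — satisfy $|Iz\cdot O(1/N)|\le C_2 i$ for a fixed $C_2$. Dropping the non-positive term and using $i\ge\log N/N$ to get $C_2 i = \tfrac{C_2}{Ni}Ni^2\le \tfrac{C_2}{\log N}Ni^2\le\tfrac{\kappa}{2}Ni^2$ for $N$ large yields $D\le -\tfrac{\kappa}{2}Ni^2$. The bound $Q\le C_1 Ni$ is routine: bounding every absorption probability by $1$ gives $q_{xx+1}\le r_+y^*Ni$, $q_{xx-1}\le Iz+C_2 i$, $q_{xx-2}\le\tfrac{r_+}{2}Ni^2+C_2 i$, all $O(Ni)$ for $i\le\epsilon\le 1$ and $N$ large.

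The main obstacle is really the first step: reading the chain of Figure~\ref{figri} off correctly and checking that $R_0=1$ produces exactly $\beta_G-\beta_E=1/(r_+y^*)$, the single fact that makes the critical drift negative rather than zero. The only other delicate point is that the ``$O(1/N)$'' corrections in $q_{xx-1},q_{xx-2}$ are uniform in $i$ and $y$, so that after multiplication by $Iz=\Theta(Ni)$ they are $O(i)$ and the hypothesis $i\ge\log N/N$ is precisely what lets the $Ni^2$ drift dominate them; everything else is bookkeeping.
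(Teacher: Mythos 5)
Your proposal is correct, and its skeleton is the same as the paper's: write $q_{xx+1}-q_{xx-1}-2q_{xx-2}=Iz(\Delta(i,y)+O(1/N))$ with $\Delta$ as in \eqref{deltaiy}, use $R_0=1$ to kill the $O(1)$ part of $\Delta$, obtain $\Delta(i,y)\leq -ci$, and invoke $i\geq \log N/N$ so that the $O(1/N)$ correction (which after multiplication by $Iz$ is $O(i)$) is dominated by the $Ni^2$ drift. Where you differ is in how $\Delta(i,y)\leq -ci$ is established. The paper gets it indirectly: it checks $\Delta(0,y)=0$ for $y\geq y^*$ from $R_0=1$, cites the proof of Lemma 4.2 of \cite{socon} for the monotonicity of $\Delta(i,y^*)$ in $i$, and uses $\partial_y\Delta>0$ to reduce to $y=y^*$. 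You instead observe that the absorption probabilities of the chain in Figure \ref{figri} are constants, so $\Delta$ is \emph{exactly} affine in $i$ (for fixed $\min(y,y^*)$), and the identity $\mathbb{P}(B\to G)-\mathbb{P}(B\to E)=1/(r_+y^*)$ forced by $R_0=1$ makes the constant term cancel, leaving the explicit coefficient $\kappa=1/y^*+(r_+/2)(\gamma_F+2\gamma_E)>0$ together with a manifestly non-positive $\max(y^*-y,0)$ term. This buys a self-contained argument with an explicit constant in place of the external monotonicity lemma; the paper's route is the one that generalizes to $R_0\neq 1$ (where $\Delta(0,y^*)\neq 0$ and only the sign of the $i$-dependence matters), which is presumably why the author reuses it here. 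Two minor points: your bracket correctly omits the stray ``$p_I$'' appearing in \eqref{deltaiy}, which is evidently a typo since it does not match the listed rates; and your final step $D\leq -\tfrac{\kappa}{2}Ni^2\leq -\tfrac{\kappa}{2C_1}iQ$ only needs the upper bound $Q\leq C_1Ni$, which is immediate from $Q\leq Iz(1+O(1/N))$, so the logic closes.
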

\begin{proof}
Define
\begin{equation}\label{deltaiy}
\Delta(i,y) = p_{SI}\mathbb{P}(B\rightarrow G) -(p_S + p_{SI}\mathbb{P}(B\rightarrow E) + p_{II}\mathbb{P}(C\rightarrow F) + p_{I}) - 2(p_{II}\mathbb{P}(C\rightarrow E))
\end{equation}
Then, it follows that
\begin{equation*}
q_{xx+1} - q_{xx-1}- 2q_{xx-2} = Iz(\Delta(i,y) + O(1/N))
\end{equation*}
In other words, the principal part experiences a drift proportional to $\Delta(i,y)$, $I$ and $z$.  If $i=0$ then $p_{II}=0$ so $p_S+p_{SI}=1$, and if in addition $y\geq y^*$ then $p_{SI} = r_+y^*/(1+r_+y^*) = \mathbb{P}(A\rightarrow B)$, and since $1 = p_S + p_{SI} = p_S + p_{SI}(\mathbb{P}(B\rightarrow E\cup F \cup G)$,
\begin{equation*}
\Delta(0,y) + 1 = 2p_{SI}\mathbb{P}(B\rightarrow G) + p_{SI}\mathbb{P}(B\rightarrow F) = 2\mathbb{P}(A\rightarrow G)+\mathbb{P}(A\rightarrow F) = R_0
\end{equation*}
so if $R_0=1$ then for $y\geq y^*$, $\Delta(0,y)=0$.  Moreover, it is not hard to check that $\partial_i \Delta$ and $\partial_y \Delta$ are well-defined.  In the proof of Lemma 4.2 in \cite{socon} it is shown for $R_0\geq 1$ that $\Delta(i,y^*)$ decreases with $i$, and using the fact that $p_S$ and $p_{II}$ decrease with $y$, exactly the same approach shows that $\partial_y \Delta>0$.  Therefore $\Delta(i,y) \leq \Delta(i,y^*) = -ci + o(i)$ for some $c>0$ and small $i>0$, so for $i>0$ small enough,
\begin{equation*}
\Delta(i,y) \leq -(c/2)i
\end{equation*}
If $i\geq (\log N)/N$, then $O(1/N) = o(i)$ so for some $c>0$ and $N$ large enough, noting that $q_{xx+1}+q_{xx-1}+q_{xx-2} = Iz$ and combining the above observations, the desired result follows.
\end{proof}

If $TI_0$ and $\delta y_0$ are small enough then we can send the process to extinction with positive probability within $o(\sqrt{N})$ amount of time.
\begin{proposition}\label{critsmall}
If $TI_0 \leq N^{\gamma}$ for $\gamma < 1/4$ and $\delta y_0 \leq \log N/\sqrt{N}$ then for some $C,p>0$,
$$\mathbb{P}(|V_{CN^{2\gamma}}|=0) \geq p$$
\end{proposition}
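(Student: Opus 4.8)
The plan is to work with the total-infectious variable $TI_t = I_t + SIA_t + 2IIA_t$ rather than $|V_t|$, since as set up above its increments have a drift that is a clean function of $(i_t,y_t)$. First I would reduce the claim to showing that $TI_t$ reaches $0$ by time $O(N^{2\gamma})$ with probability bounded below: once $\sigma := \inf\{t: TI_t = 0\}$ occurs we have $I_t=0$ and no $SIA$ or $IIA$ partnerships, so no new infectious single is ever produced and every partnership present at $\sigma$ is of $SSA$ type; its infectious members recover (by construction of the anticipated variables) before the sampled breakup time, and the sampled durations have exponential tails, so with high probability all such partnerships resolve within $O(\log N)$ further time, after which $|V_t|=0$. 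Since $N^{2\gamma}\gg\log N$, this is absorbed into the final constant $C$.

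For the main step, fix the hypothesis $TI_0\le N^\gamma$, $\delta y_0\le \log N/\sqrt N$. By Lemma \ref{ysquish} (to handle the case where $\delta y_0$ is very negative, which costs only $O(\log N)$ time, during which $\delta y$ stays below $\log N/\sqrt N$ on the relevant side) together with Lemma \ref{deltay}, with good probability $\delta y_t\le 2\log N/\sqrt N$ for all $t$ in our time horizon, and I would condition on this event $\mathcal G$, which depends only on the autonomous process $y_t$. On $\mathcal G$, using that $\Delta(i,\min(y,y^*))\le \Delta(i,y^*)\le\Delta(0,y^*)=R_0-1=0$ (as in the proof of Lemma \ref{lemdrift}), the principal part contributes drift $\le Iz\cdot O(1/N)$, while the auxiliary part contributes drift of absolute value $\le r_+ I\,\delta y^+\le 2r_+ I\log N/\sqrt N$; since the total jump rate $\rho_t$ of $TI$ satisfies $I_t\le\rho_t\le(1+2r_+)I_t$, the embedded jump chain $W_k := TI_{\tau_k}$ at the successive jump times $\tau_k$ has $\mathbb P(\text{step}=-1)\ge c_0>0$ and $|\mathbb P(\text{step}=+1)-\mathbb P(\text{step}=-1)|=O(\log N/\sqrt N)$ and $\mathbb P(\text{step}=-2)=O(1/N)$, uniformly over states with $TI\le 2N^\gamma$. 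Thus, while $TI<2N^\gamma$, $W$ is pathwise dominated by a Markov chain $\tilde W$ on $\{0,\dots,2N^\gamma\}$ moving $\pm1$ with a tiny upward bias; a gambler's-ruin computation in the style of Lemma \ref{gambler} shows $\tilde W$, hence $W$, hits $0$ before $2N^\gamma$ with probability $\ge 1/2-o(1)$ starting from any $h\le N^\gamma$, and comparing the move-chain of $W$ with symmetric simple random walk on an interval of length $2N^\gamma$ bounds the expected number of jumps needed to exit $\{0,\dots,2N^\gamma\}$ by $O(N^{2\gamma})$. Hence, for suitable constants, $TI$ reaches $0$ within $K=O(N^{2\gamma})$ jumps without ever exceeding $2N^\gamma$, with probability bounded below.

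It remains to see that $K$ jumps of $TI$ occupy only $O(N^{2\gamma})$ real time, and here I would use only a crude bound: from any state with $TI>0$, the time to the next change of $TI$ is stochastically dominated, uniformly, by a fixed $O(1)$-mean sub-exponential random variable. Indeed, if $I\ge 1$ the next single-infectious transition occurs within an $\mathrm{Exp}(\ge1)$ time and changes $TI$ unless it is of the type $S+I\to SI$ resolving as $SIA$, or the negligible $I+I\to II\to IIA$, each occurring with probability bounded away from $1$; and if $I=0$ then some $SIA/IIA$ partnership breaks up within $\mathrm{Exp}(\ge r_-/2)$ time, returning to $I\ge1$. Summing $K$ such i.i.d. bounds and applying a Chernoff estimate gives $\tau_K=O(N^{2\gamma})$ with probability tending to $1$. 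Intersecting this with the previous event and with $\mathcal G$, $TI$ — and shortly thereafter $|V|$ — hits $0$ by time $CN^{2\gamma}$ with probability at least some universal $p>0$, which proves the proposition.

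I expect the main obstacle to be precisely this last, deliberately wasteful step. The true extinction time from level $N^\gamma$ is $O(N^\gamma)$, since near the top the clock runs at rate $\Theta(I_t)\approx\Theta(TI_t)$; extracting that would require showing $I_t$ stays comparable to $TI_t$ and a finer occupation-time estimate, whereas charging $O(1)$ time to each $TI$-change is what forces — and is exactly why the statement only asserts — the scale $N^{2\gamma}$. One must also be careful there that $TI$ cannot get stuck while $I=0$, and (a secondary annoyance) that the uniform $O(\log N/\sqrt N)$ control on the bias of $W$ is genuinely uniform in the state, which is where the $\gamma<1/4$ hypothesis is used: the accumulated drift over $O(N^{2\gamma})$ jumps of size $O(\log N/\sqrt N)$, and over the resulting $O(N^{3\gamma})$ possible total steps, is then $o(N^\gamma)$, hence negligible against the initial height.
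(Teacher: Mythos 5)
Your proposal is correct and follows essentially the same route as the paper: work with $TI_t$, note its embedded jump chain is (up to the auxiliary part and $O(1/N)$ errors) an unbiased walk when $R_0=1$, run a gambler's-ruin argument on $\{0,\dots,2N^{\gamma}\}$ to hit $0$ in $O(N^{2\gamma})$ jumps, charge $O(1)$ real time per jump via the crude rate bound $\min(1,r_-)$, and then resolve the leftover $SSA$ partnerships in $O(\log N)$ additional time. The only substantive differences are cosmetic: the paper disposes of the auxiliary part by showing that with probability $1-o(1)$ there are \emph{no} auxiliary transitions at all on this time scale (a Poisson-zero bound via Lemma \ref{deltay}) rather than folding them into a small per-step bias as you do, and it justifies your asserted exponential tail of the partnership duration conditioned on its final state by a short Bayes'-rule computation.
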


\begin{proof}
Let $T = \inf\{t:TI_t=0 \textrm{ or } TI_t \geq 2N^{\gamma}\}$.  If $t<T$ then since $I_t \leq TI_t \leq 2N^{\gamma}$ and $\delta y_0 \leq \log N/\sqrt{N}$, using both results of Lemma \ref{deltay}, with good probability in $\min(N,t)$ the number of transitions due to the auxiliary part up to time $t$ is at most Poisson with rate $r_+(2N^{\gamma})C((\log N)^6/\sqrt{N} + t/\sqrt{N})$ which is at most $CtN^{\gamma-1/2}$ for large enough $N$, for some $C>0$.  Since for $X \sim \textrm{Poisson}(\lambda)$, $\mathbb{P}(X=0) = e^{-\lambda}$, in this case there are no auxiliary transitions with probability $\geq e^{-CtN^{\gamma-1/2}} \geq 1 - CtN^{\gamma-1/2}$, which for $t \leq CN^{1/2-\gamma-\epsilon}$ is at least $1-CN^{-\epsilon}$.\\

Since an $I+I\rightarrow II$ transition cannot increase $TI_t$, we may ignore these transitions.  Since an $I\rightarrow S$ transition decreases $TI$ by $1$ and an $S+I\rightarrow SI$ transition decreases $TI$ by at most $1$, letting $t_0=0$ and $t_j = \inf\{t:TI_t \neq TI_{t_{j-1}}\}$ be the jump times of $TI_t$ and defining $TI_j = TI_{t_j}$ and $p_S^* = 1/(1+r_+y^*),p_{SI}^* = r_+y^*/(1+r_+y^*)$ while noting the dependence of $p_S$ and $p_{SI}$ on $y$, $TI_j$ is stochastically dominated by the Markov chain with $p_{xx-1} = p_S^* + p_{SI}^*\mathbb{P}(B\rightarrow E)$ and $p_{xx+1} = p_{SI}^*\mathbb{P}(B\rightarrow G)$, whose increments (since $R_0=1$) have expected value $0$, implying $p_{xx-1}=p_{xx+1}=1/2$.  Using Lemma \ref{gambler}, starting from $\leq N^{\gamma}$, with probability $\geq 1/2$ absorption at $0$ or $2N^{\gamma}$ occurs after at most $CN^{2\gamma}$ transitions, so using symmetry, the chain is absorbed at $0$ after at most that many transitions with probability at least $1/4$.  If $|V_t|>0$ then either $I_t>0$ or $IP_t>0$, so either a transition in a single infectious, or breakup of an infectious partnership, is occurring at rate at least $\min(1,r_-)$.  If the transition is of a single infectious then it is a transition in $IT_t$, and if not, the breakup leads to a single infectious, so each transition in $TI_t$ occurs after at most two transitions in either $I_t$ or $IP_t$.  Thus, for $CN^{2\gamma}$ transitions in $TI_t$ to occur requires at most $2CN^{2\gamma}$ transitions in $I_t$ or $IP_t$ which, with high probability in $N^{2\gamma}$, requires time at most $3CN^{2\gamma}\max(1,1/r_-)$.  Summarizing so far, for some $C>0$, with probability $\geq 1/4-O(N^{-\epsilon})$ which is at least $1/8$ for $N$ large enough, $TI_t=0$ for some $t \leq CN^{2\gamma}$.  It remains to control the time until $|V_t|=0$.\\

Given the initial state of a partnership, for a final state $F$ and duration $\tau$ of the partnership, using Bayes' rule for the density functions we have
\begin{equation*}
d\mathbb{P}(\tau \mid F) = \frac{d\mathbb{P}(F \mid \tau)d\mathbb{P}(\tau)}{\mathbb{P}(F)}
\end{equation*}
and since for a given duration each possible final state occurs with probability $\leq 1$, $d\mathbb{P}(F \mid \tau)\leq 1$, and in particular $d\mathbb{P}(\tau \mid F) \leq Cd\mathbb{P}(\tau)$ for some $C>0$ not depending on $F$ which implies $\mathbb{P}(\tau > t \mid F) \leq C\mathbb{P}(\tau >t) = Ce^{-r_-t} = e^{-r_-(t-\log C)}$, so $(\tau\mid F)$ is at most $\log C + \textrm{exponential}(r_-)$, and noting that the time for $n$ particles, each decaying at rate $r$, to all decay is of order $\log n$, with probability $\geq 1/2$, after at most an additional $\log C + C\log N^{\gamma}$ amount of time, $V_t=0$.  Taking $p = (1/8)(1/2) = 1/16$, the result follows.
\end{proof}

Since the rate of transitions in $TI_t$ is a multiple of $I_t$, to get $TI_t$ to decrease quickly enough it would help to know the ratio $I_t/TI_t$ is not too small.  As shown in the next lemma this can be achieved with good probability in $N$ provided $TI \geq (\log N)^2$.
\begin{lemma}\label{proport}
There are $c,h>0$ so that, so long as $TI_t\geq (\log N)^2$, with good probability in $N$, $I_t \geq c TI_t$ for a long time after $h$.
\end{lemma}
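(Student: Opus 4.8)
The plan is to follow the ratio $\psi_t := I_t/TI_t$ and show that, while $\psi_t$ is small, it has a positive drift bounded away from $0$, whereas its fluctuations are negligible precisely because $TI_t\geq(\log N)^2$. The driving mechanism is that the anticipated infectious partnerships (the $SIA_t$ and $IIA_t$ partnerships) keep breaking up and disgorging single infectious at a rate comparable to their number $Q_t:=SIA_t+2IIA_t=TI_t-I_t$, while single infectious are lost only at rate $O(I_t)$; and crucially, since $R_0=1$, $TI_t$ itself carries only a negligible drift (this is the content of Lemma \ref{lemdrift}), so $\psi_t$ cannot be dragged down by $TI_t$ growing. First I would record the generator of $(I_t,TI_t)$: $I_t$ decreases at rate $I_t$ (recovery), $r_+(y_t-i_t)I_t$ ($S+I\to SI$, by $1$) and $r_+i_tI_t/2$ ($I+I\to II$, by $2$), and increases by $1$ (resp.\ $2$) whenever an $SIA_t$- (resp.\ $IIA_t$-) partnership breaks up; $TI_t$ changes only at single‑infectious transitions, with rates $q_{x\,x+1},q_{x\,x-1},q_{x\,x-2}$ as displayed before Lemma \ref{lemdrift}. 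I expect the main technical obstacle to be justifying that \emph{each active $SIA$- or $IIA$-type partnership breaks up at rate at least a positive constant $r_0$}: these classes are not adapted in the naive sense, but in the modified (adapted) construction at the start of this section the residual lifetime of a partnership, conditioned on its anticipated final state, has hazard bounded below — e.g.\ for an $SIA$ partnership formed as $SI$ this conditional lifetime is exactly exponential with rate $r_-+1+\lambda\geq r_-$, and the remaining cases are handled similarly, the conditional hazard being continuous in the age and converging to a positive limit as the age grows. (Alternatively one could run the argument on $(I_t,IP_t)$ and use the $\min(1,r_-)$ device from the proof of Proposition \ref{critsmall}, but the anticipated variables are cleaner here.)

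Granting the breakup bound, $\mathcal L I_t\geq r_0(TI_t-I_t)-(1+r_+y^*+r_0+o(1))I_t$, while from the transition rates and the identity $\Delta(0,y)=R_0-1=0$ for $y\geq y^*$ (together with the boundedness of $\partial_i\Delta,\partial_y\Delta$, and the auxiliary part having rate $r_+I_t\max(\delta y_t,0)$) one gets $|\mathcal L TI_t|\leq C(|\delta y_t|+i_t)I_t$. Feeding these into the generator of $\psi_t=I_t/TI_t$, and noting that the Itô correction from the nonlinearity is $O(\text{rate}/TI_t^2)=O(1/TI_t)$ since the total rate of transitions affecting $(I_t,TI_t)$ is $O(TI_t)$, I obtain
\[
\mathcal L\psi_t\ \geq\ r_0-(1+r_+y^*+r_0)\psi_t-C\psi_t^2-C|\delta y_t|-C/TI_t .
\]
By Lemma \ref{start} we may assume $i_t\leq\epsilon$ and $|\delta y_t|\leq\epsilon$ for a fixed small $\epsilon$, and $TI_t\geq(\log N)^2$ by hypothesis; hence for suitable positive constants $\beta',\mu_0$ and $N$ large, $\mathcal L\psi_t\geq\mu_0$ whenever $\psi_t\leq\beta'$. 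The same bookkeeping shows each jump of $\psi_t$ is $O(1/TI_t)=O(1/(\log N)^2)$ and its quadratic‑variation rate is $O(1/TI_t)=O(1/(\log N)^2)$, so $\psi_t$ behaves like a process with order‑one upward drift and vanishing noise on $\{\psi_t\leq\beta'\}$ (and it always satisfies $\psi_t\leq1$).

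With this in hand the two halves are routine; throughout we stop the process at $\sigma_*:=\inf\{t:TI_t<(\log N)^2\}$. For the transient: let $\sigma$ be the first time $\psi_t\geq\beta'$ and write $\psi_{t\wedge\sigma}=\psi_0+\int_0^{t\wedge\sigma}\mathcal L\psi_s\,ds+M_{t\wedge\sigma}$; since $\psi\leq1$ and the drift is $\geq\mu_0$ before $\sigma$, $\mu_0(t\wedge\sigma)\leq1+|M_{t\wedge\sigma}|$, and an exponential martingale inequality (the quadratic variation being $O(t/(\log N)^2)$) gives $\sup_{t\leq h}|M_{t\wedge\sigma}|\leq1/2$ with good probability in $N$ for $h:=2/\mu_0$, forcing $\sigma<h$ (or $\sigma_*<h$, in which case the claim is vacuous). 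For the maintenance, fix $c:=\beta'/4$ and take $\theta$ of order $(\log N)^2$ small enough that $\theta^2\cdot O(1/(\log N)^2)\leq\theta\mu_0/2$; a short computation with $e^{-x}-1\leq-x+x^2$ shows $e^{-\theta\psi_t}$ is a supermartingale while $\psi_t\leq\beta'$. Each time $\psi_t$ crosses $\beta'$ downward (landing in $[\beta'-O(1/(\log N)^2),\beta')$), optional stopping bounds the probability it reaches $c$ before returning above $\beta'$ by $e^{\theta(c-\beta')+O(1)}=e^{-\Theta((\log N)^2)}$; since by Lemma \ref{chern} the process makes at most $N^{O(1)}e^{\gamma(\log N)^2}$ transitions before time $e^{\gamma(\log N)^2}$, a union bound over these downcrossings shows, for $\gamma$ small enough, that with good probability $\psi_t$ never drops below $c$ once it has reached $\beta'$ — and between such excursions $\psi_t\geq\beta'>c$, so $\psi_t\geq c$ for all $t\in[h,\sigma_*\wedge e^{\gamma(\log N)^2}]$. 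Intersecting the good‑probability events above (and the standing hypotheses of Lemma \ref{start}) and invoking Remark \ref{timehorizon} yields the statement with this $c$ and $h$.
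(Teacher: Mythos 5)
Your route is genuinely different from the paper's: you analyse the ratio $\psi_t=I_t/TI_t$ via its generator and martingale estimates, whereas the paper never touches the breakup rates of the anticipated partnerships. It instead observes $SSA_t\geq SS_t$, hence $SIA_t+IIA_t\leq SI_t+II_t=IP_t$ and $TI_t\leq I_t+2IP_t$, reduces the lemma to showing $I_t\geq c\,IP_t$, and then bootstraps the two adapted quantities $I_t$ and $IP_t$ over intervals of length $h$ using Lemma \ref{chern}. The supermartingale and union-bound machinery in your last two paragraphs would be fine if your generator inequality held. But the step you yourself single out as the main obstacle --- that every $SIA$- or $IIA$-partnership breaks up at conditional rate at least some $r_0>0$ --- is false, and your justification does not cover the cases where it fails. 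Given the anticipated final state $F$, the breakup hazard at age $a$ is proportional to $\mathbb{P}(\text{within-partnership state at age }a\text{ is }F)\,r_-e^{-r_-a}$ divided by the conditional survival probability; when $F$ differs from the initial state (an $SI$ partnership destined to break up as $II$, or an $II$ destined to break up as $SI$), the numerator is $O(a)$ as $a\to 0$ because the anticipated state requires at least one infection or recovery to occur before breakup. So the conditional hazard vanishes at age $0$ and is not bounded below uniformly in the age. (Your claim that the $SI$-to-$SIA$ conditional lifetime is exactly exponential with rate $r_-+1+\lambda$ is also not right, since that class includes paths that leave and return to $SI$; but that case is at least harmless, the hazard there being bounded below.)

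Consequently the pointwise bound $\mathcal{L}I_t\geq r_0(TI_t-I_t)-O(I_t)$ does not hold: immediately after a burst of partnership formations, most of $TI_t-I_t$ can be carried by partnerships of age $o(1)$ and of the state-changing types, whose conditional breakup hazards are near zero, so the claimed order-one upward drift of $\psi_t$ on $\{\psi_t\leq\beta'\}$ --- on which both the ``transient'' and ``maintenance'' halves rest --- is not established. One could try to patch this by discarding partnerships of age less than some fixed $\delta$ and separately controlling the number of recent formations (which is at most of order $\delta\sup_s I_s$, itself needing control), but the clean fix is exactly the alternative you mention in passing: run the argument on the adapted pair $(I_t,IP_t)$, for which each infectious partnership breaks up at rate exactly $r_-$ and releases at least one infectious single, and recover the statement about $TI_t$ from $TI_t\leq I_t+2IP_t$. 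That is the paper's proof.
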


\begin{proof}
For any $t\geq 0$, since $S_t$ and $I_t$ are identical in the partner model and the adapted partner model, it follows that $SS_t+SI_t+II_t = SSA_t+SIA_t+IIA_t$.  Moreover, $SSA_t \geq SS_t$ since an $SS$ partnership can't become an $SI$ or an $II$ partnership, which implies that $SIA_t+IIA_t \leq SI_t+II_t = IP_t$ and so $TI_t = I_t + SIA_t + 2IIA_t \leq I_t + 2(SIA_t + IIA_t) \leq I_t + 2IP_t$.  If $I_t \geq c IP_t$ then $I_t + 2IP_t \leq I_t + (2/c)I_t$ and so $I_t \geq TI_t/(1+(2/c))$, so it is enough to show $I_t \geq cIP_t$ for some $c>0$.\\

The rate of transition of each infectious single is at most $1+r_+$.  Using Lemma \ref{chern} with $\mu = (1+r_+)hI_t$ and $\delta=1/2$, from among the $I_t$ infectious singles present at time $t$, whp in $I_t$ at least $(1-(3/2)h(1+r_+))I_t$ of them remain infectious singles over the time interval $[t,t+h]$, and in particular, $\inf_{0\leq s \leq h}I_{t+s} \geq (1-(3/2)h(1+r_+))I_t \geq (1-Ch)I_t$ for some $C>0$ not depending on $h$.\\

On the other hand, the rate of increase of $IP_t$ is at most $r_+I_t$.  Using again Lemma \ref{chern} with $\mu=r_+hI_t$ and $\delta=1/2$, whp in $I_t$, $\sup_{0 \leq s \leq h}IP_{t+s} \leq IP_t + (3/2)r_+h \sup_{0 \leq s \leq h}I_{t+s}$.  Since rate of increase of $I_t$ is at most $2r_-IP_t$, whp in $IP_t$, $\sup_{0\leq s \leq h}I_{t+s} \leq I_t + 3r_-h \sup_{0 \leq s \leq h}IP_{t+s}$, and combining these, whp in $\min(IP_t,I_t)$,
\begin{equation*}
\sup_{0 \leq s \leq h}IP_{t+s} \leq \frac{IP_t + (3/2)r_+hI_t}{1-(9/2)r_+r_-h^2} \leq IP_t(1+Ch) + ChI_t
\end{equation*}
for some $C>0$ not depending on $h$, if $h>0$ is small enough.  Since the rate of increase of $I_t$ is at least $r_-IP_t$ and the rate of transition of each infectious single is at most $1+r_+$, whp in $IP_t$ the number of new (over the interval $[t,t+h]$) infectious singles still present at time $t+h$ is at least $(r_-h/2)(1-(3/2)h(1+r_+)) IP_t \geq ch IP_t$ for some $c>0$ not depending on $h$, if $h>0$ is small enough.\\

Choosing $h>0$ small enough that $Ch \leq 1/2$, for any value of $I_t$, whp in $IP_t$, $I_{t+h} \geq ch IP_t$, and if $I_t \geq ch IP_t$ then whp in $\min(I_t,IP_t)$, $IP_t(1+Ch) \geq \sup_{0\leq s \leq h}IP_{t+s} - I_t/2 \geq \sup_{0 \leq s \leq h}IP_{t+s} - \inf_{0 \leq s \leq h}I_{t+s}$ and $\inf_{0 \leq s \leq h} I_{t+s} \geq I_t/2 \geq ch IP_t/2 \geq (ch/3)\sup_{0 \leq s \leq h} IP_{t+s} - (ch/2)\inf_{0 \leq s \leq h}I_{t+s}$, and so $I_{t+s} \geq (ch/3)/(1+ch/2)IP_{t+s}$ for all $s\in [0,h]$.  An analogous argument shows that for some $c,c'>0$ and any value of $IP_t$, whp in $I_t$, $IP_{t+h} \geq ch I_t$, and if $IP_t \geq ch I_t$ then whp in $\min(I_t,IP_t)$, $IP_{t+s} \geq c'h I_{t+s}$ for all $s \in [0,h]$.  If $h$ is taken small enough then for any value of $IP_t$ and $I_t$, whp in $\max(I_t,IP_t)$, $I_{t+h} \geq ch IP_{t+h}$ and $IP_{t+h} \geq ch I_{t+h}$.  If $TI_t \geq (\log N)^2$ then $\max(I_t,IP_t) \geq (1/3)(\log N)^2$, and ``whp in $(\log N)^2$'' is equivalent to ``wgp in $N$''.  Since after one iteration step $\min(I_t,IP_t) \geq ch \max(I_t,IP_t)$, after one iteration step ``whp in $\min(I_t,IP_t)$'' and ``whp in $\max(I_t,IP_t)$'' are equivalent.  Iterate to get the result wgp in $N$ for a long time after $h$.
\end{proof}

The following result works for more general jump rates with finite range, but is tailored to the present context.
\begin{lemma}\label{hitbound}
Let $X_t$ be a continuous-time jump process on $\{...,M-1,M\}$ absorbed at $\{M\}$ with time-dependent and possibly random jump rates $q_{xx+1},q_{xx-1},q_{xx-2}$ satisfying, almost surely, $0 < q \leq q_{xx+1} + q_{xx-1} + q_{xx-2} \leq Q$ for some $q,Q$ and all $t\geq 0$.   Let $h(x,t) = \mathbb{P}(X_t = M \mid X_0=x)$.  If for some $\lambda>1$ and all $t\geq 0$,
\begin{equation}\label{increst2}
q_{xx+1}(\lambda-1) + q_{xx-1}(\lambda^{-1}-1) + q_{xx-2}(\lambda^{-2}-1) \leq 0
\end{equation}
then $h(x,t) \leq \lambda^{x-M}$ for all $t\geq 0$.\\

Let $X_t$ be as above except on $\{-1,0,...\}$ with $X_0 \in \{0,...,M-1\}$, and let $\tau= \inf\{t:X_t \in \{-1,0\}\}$.  If for all $t\geq 0$, $q_{xx+1}-q_{xx-1}-2q_{xx-2} \leq -b(q_{xx+1}+q_{xx-1}+q_{xx-2})$ then
\begin{equation*}
\mathbb{P}(\tau > 8M/(bq)) \leq e^{-Mb/32}+e^{-cM/(4b)}
\end{equation*}
\end{lemma}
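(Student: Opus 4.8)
The plan is to pass to the embedded jump chain and decompose the event $\{\tau>T\}$, $T=8M/(bq)$, according to the number $K_T$ of jumps that $X$ makes by time $T$: when few jumps have occurred the event is unlikely by a Poisson large‑deviation bound, and when many jumps have occurred it is unlikely because the embedded walk, having strictly negative drift, should already have reached $\{-1,0\}$; these two regimes produce the two exponential terms. Concretely, let $0=s_0<s_1<\cdots$ be the jump times of $X$ up to $\tau$, put $Y_n=X_{s_n}$, and write $\sigma_x=q_{xx+1}+q_{xx-1}+q_{xx-2}$. By \cite{norris}, $Y_n$ is a Markov chain whose one‑step probabilities are the $q$'s normalised by $\sigma_x$, so dividing the drift hypothesis by $\sigma_x$ gives $\mathbb{E}[Y_n-Y_{n-1}\mid\mathcal F_{n-1}]\le-b$ whenever $Y_{n-1}\ge1$. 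Put $\tau_Y=\inf\{n:Y_n\le0\}$ and $N=\lceil 4M/b\rceil$. The key point is that on $\{\tau>T\}\cap\{K_T\ge N\}$ the first $N$ jumps precede $\tau$, so $Y_0,\dots,Y_N\ge1$ and hence $\tau_Y>N$.

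For the timing term, I would use the time‑change representation $K_{t\wedge\tau}=\Pi(\int_0^{t\wedge\tau}\sigma_{X_s}\,ds)$ with $\Pi$ a rate‑one Poisson process (legitimate since $\sigma_x\le Q$ rules out explosion); because $\sigma_{X_s}\ge q$ and $\Pi$ is nondecreasing, $K_{t\wedge\tau}\ge\Pi(q(t\wedge\tau))$, so on $\{\tau>T\}$, $K_T\ge\Pi(qT)$, a $\mathrm{Poisson}(qT)$ variable of mean $qT=8M/b$. Since $N$ is about half this mean, Lemma~\ref{chern} gives $\mathbb{P}(\tau>T,\,K_T<N)\le\mathbb{P}(\Pi(qT)<N)\le e^{-cM/(4b)}$ for a suitable $c>0$.

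For the drift term, on $\{\tau_Y>N\}$ decompose $Y_n-Y_{n-1}=\mathbb{E}[Y_n-Y_{n-1}\mid\mathcal F_{n-1}]+\xi_n$, where $\xi_n$ is a martingale difference with $|\xi_n|\le3$ (the increments lie in $\{-2,-1,1\}$ and their conditional means in $[-2,1]$). Summing over $n\le N$, and using $Y_0\le M-1$, $Y_N\ge1$, and the drift bound at each of these steps, one gets $\sum_{n\le N}\xi_n\ge bN-M\ge3M$. Azuma's inequality — equivalently, the exponential Markov bound of Lemma~\ref{chern} applied to $\sum_{n\le N}\xi_n$ — then yields $\mathbb{P}(\tau_Y>N)\le\exp(-(3M)^2/(18N))\le e^{-Mb/32}$ (with room to spare), and hence $\mathbb{P}(\tau>T,\,K_T\ge N)\le e^{-Mb/32}$. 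Adding the two contributions gives the stated bound.

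The ingredients are individually routine; the points where care is needed — and where I would expect to have to be most careful — are (i) invoking the drift hypothesis only on $\{\tau_Y>N\}$, where each of the first $N$ steps genuinely precedes absorption, and (ii) calibrating $N\asymp M/b$ so that the Poisson exponent ($\asymp M/b$) and the Azuma exponent ($\asymp Mb$) both come out at the claimed rates. I would also remark in passing that the drift hypothesis of this part forces condition~\eqref{increst2} of the first part with $\lambda=1+b/3$: the function $\phi(\lambda)=q_{xx+1}(\lambda-1)+q_{xx-1}(\lambda^{-1}-1)+q_{xx-2}(\lambda^{-2}-1)$ satisfies $\phi(1)=0$, $\phi'(1)\le-b\sigma_x$ and $\phi''\le6\sigma_x$ on $[1,2]$, so $\phi(1+b/3)\le0$; one could thus alternatively control $\sup_{s\le T}X_s$ by Doob's inequality for the supermartingale $\lambda^{X_s}$, but that route still requires a timing estimate and is no shorter.
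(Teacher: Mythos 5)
Your argument for the second assertion is correct and is essentially the paper's: the paper likewise passes to the embedded jump chain, observes that dividing the drift hypothesis by $\sigma_x=q_{xx+1}+q_{xx-1}+q_{xx-2}$ gives a per-step conditional mean at most $-b$, applies the exponential Markov (Chernoff) bound to the sum of increments over $n=4M/b$ steps to obtain $e^{-Mb/32}$, and separately uses Lemma~\ref{chern} together with the lower bound $q$ on the total jump rate to show the $n$-th jump occurs by time $2n/q=8M/(bq)$ up to probability $e^{-M/(4b)}$. Your Azuma phrasing and your decomposition of $\{\tau>T\}$ according to the jump count $K_T$ are cosmetic variants of this; your concern (i) about invoking the drift only before absorption is handled in the paper by extending the rates off $\{1,\dots,M-1\}$ and in your version by stopping, and both are fine. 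The arithmetic ($|\xi_n|\le 3$, $\sum_{n\le N}\xi_n\ge bN-M=3M$, exponent $M^2/(2N)=Mb/8$) checks out.

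The gap is that you never prove the first assertion, $h(x,t)\le\lambda^{x-M}$ under \eqref{increst2}, which is the half of the lemma used in Proposition~\ref{critbig} to bound the probability of hitting the upper barrier. Your closing remark contains exactly the idea needed — \eqref{increst2} says that $\lambda^{X_t}$ is a supermartingale, whence $\lambda^{M}h(x,t)=\lambda^{M}\mathbb{P}(X_t=M)\le\mathbb{E}\lambda^{X_t}\le\lambda^{x}$ — but you deploy it only as an aside about an alternative route to the second assertion, not as a proof of the first. The paper proves the first assertion by a preservation argument on the Kolmogorov backward equation: with $h(x,s,t)=\mathbb{P}(X_t=M\mid X_{t-s}=x)$, the bound $h(x,s,t)\le\lambda^{x-M}$ holds at $s=0$ and is propagated in $s$ because the generator applied to $\lambda^{x-M}$ is nonpositive by \eqref{increst2}, with a Gronwall-type estimate on $\lambda^{x-M}-h(x,s,t)$ (using $\sigma_x\le Q$) making the propagation rigorous. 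Since the rates here are time-dependent and random, whichever formulation you adopt you should include that justification rather than leave the supermartingale claim implicit.
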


\begin{proof}
For fixed $t$ and $0 \leq s \leq t$ define
\begin{equation*}
h(x,s,t) = \mathbb{P}(X_t = M \mid X_{t-s}=x)
\end{equation*}
so that $h(x,t) = h(x,t,t)$.  An application of the Kolmogorov equations (see \cite{norris}) shows that for $x \in \{1,...,M-1\}$,
\begin{equation*}
\partial_s h(x,s,t) = \sum_{i \in \{-1,1,2\}}q_{xx+i}(t-s)[h(x+i,s,t)-h(x,s,t)]
\end{equation*}
Since $h(x,0,t)=\mathbf{1}(x=M)$, clearly $h(x,0,t) \leq \lambda^{x-M}$ for each $x$, so it is enough to check the condition $h(x,s,t) \leq \lambda^{x-M}$ is preserved as $s$ increases.  Supposing it holds up to time $s$ and letting $q_{xx} = q_{xx+1}+q_{xx-1}+q_{xx-2}$,
\begin{equation*}
\partial_s h(x,s,t) \leq \lambda^{x-M}\sum_{i \in \{-1,1,2\}}q_{xx+i}(t-s)(\lambda^i-1) + q_{xx}(t-s)(\lambda^{x-M}-h(x,s,t))
\end{equation*}
Since the first term is $\leq 0$ by assumption and $q_{xx}\leq Q$, $\partial_s(\lambda^{x-M} - h(x,s,t) \geq -Q(\lambda^{x-M}-h(x,s,t)$, so $\lambda^{x-M}-h(x,s,t) \geq e^{-Qs}(\lambda^{x-M}-h(x,0,t))$, and since $\lambda^{x-M}-h(x,0,t)>0$ for $x \neq M$, in particular $\lambda^{x-M} - h(x,s,t)\geq 0$ for $0 \leq s \leq t$.\\

To get the second result we use large deviations.  Suppose for the moment that $X_t$ has state space $\mathbb{Z}$ and is not absorbed at $\{-1,0,M\}$; for $x$ off the set $\{1,...,M-1\}$ use the value of the jump rates at, say, $x=1$.  Let $t_0=0$ and $t_i = \inf\{t>t_{i-1}:X_t \neq X_{t_{i-1}}\}$ be the jump times of $X_t$, then
\begin{equation*}
\mathbb{P}(X_{t_i} = X_{t_{i-1}}+j) = \int_{t_{i-1}}^{\infty}\frac{\int_{t_{i-1}}^{t_i}q_{xx+j}(s)ds}{\int_{t_{i-1}}^{t_i}q_{xx}(s)ds}d\mathbb{P}(t_i=t)
\end{equation*}
so we find that
\begin{equation*}
\mathbb{E}e^{\theta(X_{t_i}-X_{t_{i-1}})} = \int_{t_{i-1}}^{\infty}\frac{\int_{t_{i-1}}^{t_i}\sum_j e^{\theta j}q_{xx+j}(s)ds}{\int_{t_{i-1}}^{t_i}q_{xx}(s)ds}d\mathbb{P}(t_i=t)
\end{equation*}
then since for $\theta\leq 1/2$ and $|j|\leq 2$, $e^{\theta j} \leq 1+\theta j + j^2$ and using the assumption,
\begin{equation*}
\mathbb{E}e^{\theta(X_{t_i}-X_{t_{i-1}})} \leq \int_{t_{i-1}}^{\infty}\frac{\int_{t_{i-1}}^{t_i}(1 - b\theta + 4\theta^2)q_{xx}(s)ds}{\int_{t_{i-1}}^{t_i}q_{xx}(s)ds}d\mathbb{P}(t_i=t) = 1-b\theta+4\theta^2
\end{equation*}
which if $-b\theta + 4\theta^2\leq 0$ and $b\theta - 4\theta^2$ is small enough is at most $e^{-(b/2)\theta + 2\theta^2}$.  Since the estimate does not depend on the value of $X_{t_{i-1}}$ it follows that $\mathbb{E}e^{\theta(X_{t_n}-X_{t_0})} \leq e^{-n((b/2)\theta - 2\theta^2)}$ so $\mathbb{P}(X_{t_n} > 0 \mid X_0 < M ) \leq e^{-n(((b/2)-M/n)\theta-2\theta^2)}$ which for $\theta = ((b/2)-M/n)/4$ is equal to $e^{-n((b/2)-M/n)^2/8}$, and letting $n = 4M/b$ this is equal to $e^{-nb^2/128} = e^{-Mb/32}$.  By Lemma \ref{chern}, with probability $\geq 1 - e^{-n/16} = 1 - e^{-M/(4b)}$, $t_n \leq 2n/q$, and the result follows.
\end{proof}

By Lemma \ref{start} and noting Remark \ref{timehorizon} we may assume for any fixed $\epsilon>0$ that $|V_t| \leq \epsilon N$ for $t\geq T=T(\epsilon)$.  Since the remainder of our estimates involve times of order $\sqrt{N}$ and $T$ is fixed, we may assume the process begins at time $T$.  Also, since from here till the end of the paper we are only concerned with $TI_t \geq N^{\gamma}$, by Lemma \ref{proport} we may assume $I_t \geq cN^{\gamma}$.  Since $I_t \leq |V_t|$ the conditions of Lemma \ref{lemdrift} are satisfied, so we make use of its conclusion without further comment.\\

Next we show that $TI_t$ can be brought down to $C\sqrt{N}$ by time $C\sqrt{N}$, and with good control on $\delta y_t$, with positive probability.  The approach is to use the drift in the principal part while controlling the contribution from the auxiliary part.\\

\begin{proposition}\label{critbig}
There is $C>0$ so that with probability $\geq 1/2$, for some $t\leq C\sqrt{N}$, $TI_t \leq C\sqrt{N}$ and $|\delta y_t| \leq 2\log N/\sqrt{N}$.
\end{proposition}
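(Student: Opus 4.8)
The plan is to show that the Lyapunov functional $1/TI_t$ has a drift bounded below by a positive multiple of $1/N$ for as long as $TI_t$ stays above a suitable level $C\sqrt N$, which forces $1/TI_t$ to reach $1/(C\sqrt N)$ — i.e.\ $TI_t$ to reach $C\sqrt N$ — within time $O(\sqrt N)$. First I would fix the background. By Lemma \ref{start} and Remark \ref{timehorizon} we may assume $|V_t|\le\epsilon N$, hence $TI_t\le 2\epsilon N$ and $I_t\le\epsilon N$, throughout the window of interest; by Lemma \ref{ysquish} we reach $|\delta y|\le\log N/\sqrt N$ after $O(\log N)$, and then Lemma \ref{deltay} gives, with good probability in $N$, both $|\delta y_t|\le 2\log N/\sqrt N$ throughout and, combining its first and third parts exactly as in the proof of Proposition \ref{critsmall}, $\int_0^t|\delta y_s|\,ds\le C_0(\log N)^6/\sqrt N + C_1 t/\sqrt N$ for some constants $C_0,C_1$; and by Lemma \ref{proport}, after a further constant time, $I_t\ge c\,TI_t$ whenever $TI_t\ge(\log N)^2$. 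All of these hold with probability $1-o(1)$, so I would condition on them and restart the clock, the $O(\log N)$ of warm-up being negligible against $\sqrt N$.

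Second, I would assemble the drift of $TI_t$. Split the transition rates into the principal rates $q_{xx\pm 1},q_{xx-2}$ of Lemma \ref{lemdrift} and the auxiliary rates; the auxiliary transitions all have type $S+I\to SI$, occur at total rate $r_+I(\delta y)^+$, and each changes $TI_t$ by at most $+1$ (the infectious single leaves $I$ and the immediately resolved pair re-enters as one of $SSA,SIA,IIA$), so they contribute at most $r_+I(\delta y)^+\le r_+TI_t(\delta y)^+$ to the drift. When $TI_t\ge C\sqrt N$ we have $i_t\ge cC/\sqrt N\ge(\log N)/N$, so estimate \eqref{increst} of Lemma \ref{lemdrift} applies, and $TI_t\ge(\log N)^2$, so $I_t\ge c\,TI_t$; combining these with $I\le TI$ gives that the drift of $TI_t$ is at most $-a\,TI_t^2/N+r_+TI_t(\delta y_t)^+$ for a fixed $a>0$. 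Since $u\mapsto 1/u$ is convex, $\mathcal L(1/TI)(u)\ge -u^{-2}\cdot(\text{drift of }TI)$ for $u\ge 3$, and therefore $\mathcal L(1/TI)(TI_t)\ge a/N-r_+(\delta y_t)^+/TI_t$.

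For the main step, let $\sigma=\inf\{t:TI_t\le C\sqrt N\}$ and argue on $\{\sigma>C_2\sqrt N\}$, on which $TI_s>C\sqrt N$ for all $s\le C_2\sqrt N$. By Dynkin's formula $1/TI_{t\wedge\sigma}-1/TI_0-\int_0^{t\wedge\sigma}\mathcal L(1/TI)\,ds=:M_{t\wedge\sigma}$ is a martingale, so using the drift bound, $1/TI_s\le 1/(C\sqrt N)$ for $s<\sigma$, and $\int_0^{C_2\sqrt N}(\delta y_s)^+\,ds\le C_1C_2+o(1)$ from Lemma \ref{deltay},
$$\frac{1}{TI_{C_2\sqrt N}}\ \ge\ \frac{aC_2}{\sqrt N}-\frac{r_+(C_1C_2+o(1))}{C\sqrt N}+M_{C_2\sqrt N}\ \ge\ \frac{C_2(a-r_+C_1/C)}{\sqrt N}+M_{C_2\sqrt N}-\frac{o(1)}{\sqrt N}.$$
Choose $C$ large enough that $a-r_+C_1/C\ge a/2$; then, since $1/TI_{C_2\sqrt N}<1/(C\sqrt N)$ on $\{\sigma>C_2\sqrt N\}$, we get $M_{C_2\sqrt N}<-(aC_2/4)/\sqrt N$ once $C_2$ is large. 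The jumps of $1/TI$ are $O(TI^{-2})$ and the total transition rate is $O(TI)$ while $TI\ge C\sqrt N$, so $\langle M\rangle_{C_2\sqrt N}\le\int_0^{C_2\sqrt N}O(TI_s^{-3})\,ds=O\big(C_2/(C^3N)\big)$, whence Chebyshev gives $\mathbb{P}\big(M_{C_2\sqrt N}<-(aC_2/4)/\sqrt N\big)=O\big(1/(C_2C^3)\big)$. Taking first $C$ and then $C_2$ large makes this $<1/4$, and adding the $o(1)$ failure probability of the conditioning events gives $\mathbb{P}(\sigma>C_2\sqrt N)<1/2$. On the complement, for $t=\sigma\le C_2\sqrt N$ we have $TI_t\le C\sqrt N$ and $|\delta y_t|\le 2\log N/\sqrt N$ (from the conditioning); taking the constant in the proposition to be $\max(C,C_2)$ completes the proof.

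The point needing care is the auxiliary part. The surplus partnering when $y>y^*$ pushes $TI_t$ upward, and at the scale $TI\sim\sqrt N$ a spike of $\delta y$ up to $\log N/\sqrt N$ can locally overwhelm the $O(TI^2/N)$ downward drift, so from the pointwise bound $|\delta y|\le 2\log N/\sqrt N$ alone one could push $TI$ no lower than order $\sqrt N\log N$. What makes the argument reach $C\sqrt N$ is that in the Lyapunov estimate the auxiliary term appears as $r_+(\delta y)^+/TI\le r_+(\delta y)^+/(C\sqrt N)$, whose time integral is controlled by $\int_0^t|\delta y_s|\,ds=O(t/\sqrt N)$ from Lemma \ref{deltay}; the extra factor $1/C$ then makes it a small fraction of the good term $a/N$ once the target level $C\sqrt N$ is chosen large enough.
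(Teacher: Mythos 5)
Your proof is correct, but it takes a genuinely different route from the paper. The paper runs a dyadic multiscale argument: it defines stopping times at which $TI$ halves or doubles, and at each scale $L=TI/\sqrt N$ applies Lemma \ref{hitbound} twice --- the harmonic-function bound $h(x,t)\le\lambda^{x-M}$ to rule out doubling, and the exponential-moment bound to show the halving happens within time $O(\sqrt N/L)$ --- obtaining failure probability $e^{-cL^2}$ per scale and then summing both the errors and the times geometrically over $\log_2 N$ scales. You instead run a single global Lyapunov argument with $f(u)=1/u$: convexity converts the drift bound $-a\,TI^2/N + r_+TI(\delta y)^+$ of Lemma \ref{lemdrift} into a constant drift $\ge a/N - r_+(\delta y)^+/TI$ for $1/TI$, Dynkin's formula plus the time-integrated control $\int_0^t|\delta y_s|\,ds=O((\log N)^6/\sqrt N + t/\sqrt N)$ from Lemma \ref{deltay} forces $1/TI$ up to $1/(C\sqrt N)$ by time $C_2\sqrt N$ unless the martingale part is anomalously negative, and the bound $\langle M\rangle=O(C_2/(C^3N))$ kills that via Chebyshev. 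Your approach is shorter, treats all scales from $\epsilon N$ down to $C\sqrt N$ uniformly, and entirely avoids Lemma \ref{hitbound}; the price is a merely polynomial failure probability $O(1/(C^3C_2))$ rather than the paper's stretched-exponential per-scale bounds, which is enough here since the proposition only claims probability $\ge 1/2$ (the paper's sharper control is also not reused downstream for this step). Two points you should make explicit but which do not affect correctness: Dynkin's formula must be applied to the adapted modification of $(SSA,SIA,IIA)$ described at the start of Section \ref{secext}, so that $TI_t$ is an adapted jump process with predictable rates and $M_{t\wedge\sigma}$ is a genuine $L^2$ martingale; and the order of quantifiers matters --- $C$ is fixed first (depending only on $a$, $r_+$ and the constant $C_1$ from Lemma \ref{deltay}) to beat the auxiliary term, and $C_2$ is taken large afterwards to beat both the boundary term $1/(C\sqrt N)$ and the Chebyshev bound. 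You correctly isolate the one genuinely delicate point, namely that the pointwise bound $|\delta y|\le 2\log N/\sqrt N$ is useless at scale $\sqrt N$ and only the integrated bound from Lemma \ref{deltay} saves the argument; this is precisely the role that bound plays in the paper's own proof.
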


\begin{proof}
Using Lemma \ref{ysquish}, with good probability in $N$, for some $t \leq C\log N$, $|\delta y_t|\leq \log N/\sqrt{N}$.  Since $C\log N = o(\sqrt{N})$, we may assume the process began at time $t$, so that $|\delta y_0|\leq \log N/\sqrt{N}$.  By Lemma \ref{deltay}, with good probability in $N$ for a long time in $N$, $|\delta y_t| \leq 2 \log N/\sqrt{N}$, which establishes the easy part of the above statement.  Let $t_0=0$ and $L_0 = TI_0/\sqrt{N}$ then define recursively $t_j = \inf\{t>t_{j-1}: TI_t/TI_{t_{j-1}}\notin [1/2,2]\}$ and $L_j = TI_j/\sqrt{N}$.\\

Given $j\geq 1$ let $X_t$ and $Y_t$ denote the change in $TI_t$ from $t_{j-1}$ up to $t$ due to the principal and auxiliary parts, respectively, so that $X_0=Y_0=0$ and $TI_t = TI_{t_{j-1}} + X_{t-t_{j-1}} + Y_{t-t_{j-1}}$ for $t\geq t_{j-1}$.  By Lemma \ref{proport} we may assume $I_t \geq c\sqrt{N}L_{j-1}/2$ for all $t \in [t_{j-1},t_j]$, so using \eqref{increst}, $X_t$ satisfies the conditions of Lemma \ref{hitbound} with $q = c\sqrt{N}L_{j-1}/2$ and $b = cL_{j-1}/(2\sqrt{N})$.  We use the following fact to estimate $t_j$ as well as the value of $TI_{t_j}$: if $|Y_t| \leq w\sqrt{N}L_{j-1}$ and $X_t \leq -(1/2+w)L_{j-1}\sqrt{N}$ then $t\geq t_j-t_{j-1}$.\\

Recall that $Y_t$ has transitions at rate $r_+I_t\max(\delta y_t,0)$.  Letting $t^* = m\sqrt{N}/L_{j-1}$ and using Lemma \ref{deltay} and $|\delta y_0| \leq \log N/\sqrt{N}$, with good probability in $\min(N,m\sqrt{N}/L_{j-1})$ which since $L_{j-1}$ will always be larger than some constant, is $\geq m\sqrt{N}/L_{j-1}$, for some $C>0$
\begin{equation*}
\sup_{t \leq t^*}|Y_t| \leq C\sqrt{N}L_{j-1}((\log N)^6 + m\sqrt{N}/L_{j-1})/\sqrt{N} \leq C\sqrt{N}L_{j-1}((\log N)^6/\sqrt{N} + m/L_{j-1})
\end{equation*}
Assuming that $t^* \geq t_j-t_{j-1}$ and that the right-hand side is at most $w\sqrt{N}L_{j-1}$, then using \eqref{increst} and applying the second result of Lemma \ref{hitbound} with $b,q$ as above and $M = ((3/2)+2w)\sqrt{N}L_{j-1}$, since $bM = -(3+4w)L_{j-1}^2/4$, with probability $\geq 1 - e^{-cL_{j-1}^2} - e^{-cN}$ for some $c>0$, $t_j-t_{j-1}\leq C'\sqrt{N}/L_{j-1}$ for some $C'>0$.  We see the above assumption is satisfied by taking $m = C'$ and $L_{j-1}$ and $N$ large enough that $(\log N)^6/\sqrt{N}+m/L_{j-1} \leq w/C$.\\

If we set $\lambda=1+\epsilon$ in the first statement in Lemma \ref{hitbound}, then $\lambda-1=\epsilon$, $\lambda^{-1}-1 = -\epsilon + O(\epsilon^2)$ and $\lambda^{-2}-1 = -2\epsilon + O(\epsilon^2)$, so if $\epsilon>0$ is small enough and $q_{xx+1}-q_{xx-1}-2q_{xx-2} \leq -\epsilon(q_{xx+1}+q_{xx-1}+q_{xx-2})$ then \eqref{increst2} is satisfied.  In this case, the hypothesis is satisfied with $\epsilon = cL_{j-1}/(2\sqrt{N})$, so using $x = \sqrt{N}(L_{j-1}(1/2+w)$ and $M=(3/2-2w)\sqrt{N}L_{j-1}$ and $\lambda = 1 + \epsilon$ the hitting probability of the lower endpoint is at least $1- (1+cL_{j-1}/(2\sqrt{N})^{-(1-3w)\sqrt{N}L_{j-1}} \geq 1 - e^{-cL_{j-1}^2}$ for some $c>0$ provided $w<1/3$, which can be prescribed.\\

We now estimate the probability that starting from $TI_0 \leq \epsilon N$ for $\epsilon>0$ to be chosen, $TI_{t_j} \leq TI_{j-1}/2$ repeatedly until $TI_t \leq C\sqrt{N}$.  To go from $\epsilon N$ down to $C\sqrt{N}$ by multiples of $2$ requires at most $\log_2 N$ steps.  At each step the desired event has probability $\geq 1 - 2e^{-cL^2} - e^{-cN}$ minus the error term in ``good probability in $\sqrt{N}/L$'', which is at most $e^{-c(\log(\sqrt{N}/L))^2}$ for some $c>0$.  Since $TI_t \leq 2|V_t|$ we may assume $TI_t \leq \epsilon$ for $t\geq 0$ which gives $L_j \leq \epsilon \sqrt{N}$ for all $j$, so taking $\epsilon = c/C$ and then summing over values of $L$ such that $C/c \leq L \leq \epsilon \sqrt{N}$ in decreasing order in the first case and in increasing order in the second case, the probability is at least
$$1 - 2\sum_{j=0}^{\infty}e^{-C^22^{2j}/c} - \log_2 N e^{-cN} - \sum_{j=0}^{\infty}e^{-c(\log(2^jC/c))^2}$$
which is at least $1/2$ for $C$ large enough.  On this event the time taken is $m\sqrt{N}/L_{j-1}$ at each step, where $m$ is fixed, which summing over the above values is at most $(mc\sqrt{N}/C)\sum_{j=0}^{\infty}2^{-j} \leq 2mc\sqrt{N}/C$.
\end{proof}

Next we show that for fixed $C>c$, $TI_t$ can be brought down from $C\sqrt{N}$ to $c\sqrt{N}$ with positive probability within time $C'\sqrt{N}$ for some $C'>0$.
\begin{proposition}\label{crittopmid}
For any $C>c>0$, there are $C',p>0$ so that with probability $\geq p>0$ uniformly in $N$, if $TI_0 \leq C\sqrt{N}$ and $|\delta y_0| \leq 2\log N/\sqrt{N}$ then for some $t \leq C'\sqrt{N}$, $TI_t \leq c\sqrt{N}$ and $|\delta y_t| \leq 4 \log N/\sqrt{N}$.
\end{proposition}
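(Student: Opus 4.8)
The plan is to bring $TI_t$ down by a small multiplicative factor at a time. The key difference from Proposition~\ref{critbig} is that once $TI_t$ is of order $\sqrt N$ the drift is no longer the dominant effect: on the spatial scale $1/\sqrt N$ the process $TI_t$ behaves like a random walk whose drift and diffusive fluctuations are of the same order, so over a time $\Theta(\sqrt N)$ only a favourable fluctuation — of probability bounded away from $0$ uniformly in $N$, but not tending to $1$ — pushes $TI_t$ down by a constant factor; this is why the conclusion can only be demanded with probability $\ge p$.

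First I would fix the good event. By Lemma~\ref{proport} I may assume $I_t\ge c_0TI_t$ throughout the window of interest, and by Lemma~\ref{deltay} that $|\delta y_t|\le4\log N/\sqrt N$ up to the time horizon together with $\int_0^t\max(\delta y_s,0)\,ds\le C_1\big((\log N)^6/\sqrt N+t/\sqrt N\big)$; by Remark~\ref{timehorizon} these wgp events cost nothing, and the conditions of Lemma~\ref{lemdrift} hold throughout. As in the setup preceding Lemma~\ref{lemdrift} I split the jumps of $TI_t$ into the principal part $X$ (total rate $Iz$, increments in $\{+1,0,-1,-2\}$) and the auxiliary part $Y$ (rate $r_+I\max(\delta y,0)$, increments in $\{-1,0,+1\}$). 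By Lemma~\ref{lemdrift} the per-jump drift of $X$ is at most $-\kappa i<0$ for a fixed $\kappa>0$, with $i\ge c_0c/\sqrt N$; an auxiliary jump is an $S+I\to SI$ move followed by resolution from state $B$, and so changes $TI$ by $-1,0,$ or $+1$ according as the pair resolves to $SSA,SIA,$ or $IIA$, giving per-jump drift $\mathbb P(B\to G)-\mathbb P(B\to E)$, which using $\mathbb P(A\to B)[\mathbb P(B\to F)+2\mathbb P(B\to G)]=R_0=1$ (from the proof of Lemma~\ref{lemdrift}) and $\mathbb P(A\to B)=r_+y^*/(1+r_+y^*)$ equals $1/(r_+y^*)>0$: the auxiliary part can only push $TI$ upward.

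Now the descent. Fix a small absolute constant $\delta>0$ (chosen last). Starting from $TI_0\le C\sqrt N$, run a sequence of stages; a stage beginning with $TI\approx\ell\sqrt N$ (so $c\le\ell\le C$) confines $TI$ to $[(\ell-\delta)\sqrt N,(\ell+\tfrac{5}{2}\delta)\sqrt N]$. On this sub-window the total rate is $\asymp\ell\sqrt N$, the principal step distribution agrees with that at level $\ell\sqrt N$ up to a factor $1+O(\delta)$, its steps have variance bounded below, and its drift is $\le0$; a gambler's-ruin estimate of the type in Lemma~\ref{gambler} (adapted to the size-$2$ down-jumps and the mild level dependence) then shows $X$ reaches $-\tfrac{3}{2}\delta\sqrt N$ before $+2\delta\sqrt N$ with probability $\ge\tfrac{5}{8}$, and a hitting-time estimate as in Lemma~\ref{gambler} together with the rate bound shows this happens within time $\le C_6\delta^2\sqrt N/\ell$ wgp. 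Over that time the number of auxiliary jumps is Poisson with mean $\le r_+(\ell+2\delta)\sqrt N\cdot C_1\big(C_6\delta^2/\ell+(\log N)^6/\sqrt N\big)\le2r_+C_1C_6\,\delta^2\sqrt N$, so by Lemma~\ref{chern} $|Y|\le C_7\delta^2\sqrt N\le\tfrac{1}{2}\delta\sqrt N$ whp, and hence $TI$ has genuinely descended to $\le(\ell-\tfrac{1}{2}\delta)\sqrt N$. Iterating over the $O((C-c)/\delta)$ stages, the stage probabilities multiply to a constant $p=p(C,c)>0$ independent of $N$, the stage times sum to $\sum_kC_6\delta^2\sqrt N/\ell_k\asymp\delta\log(C/c)\,\sqrt N=:C'\sqrt N$, and Lemma~\ref{deltay} keeps $|\delta y_t|\le4\log N/\sqrt N$ throughout.

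The main obstacle is the per-stage estimate, and in particular the interplay of two effects that both live on the scale $\delta\sqrt N$. On one side the gambler's-ruin comparison must be genuinely uniform in $N$: here the non-positivity of the principal drift (Lemma~\ref{lemdrift}) is essential, since it lets me dominate $X$ below by a driftless random walk and conclude the downward crossing has probability $\ge\tfrac{5}{8}$ for every $N$; dealing with the size-$2$ steps and the $O(\delta)$ level dependence inside the sub-window is routine but must be carried out. On the other side the auxiliary part pushes $TI$ upward at a rate whose integral over the natural stage time $\asymp\delta^2\sqrt N/\ell$ is $\asymp\delta^2\sqrt N$ — of the \emph{same} order as the stage size $\delta\sqrt N$, and negligible only because $\delta$ is small. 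This is exactly why the descent cannot be done in one shot: over the time $\asymp C^2\sqrt N/c$ a single gambler's-ruin passage from $C\sqrt N$ to $c\sqrt N$ would take, the auxiliary contribution $\asymp C^3\sqrt N/(cy^*)$ would swamp the descent, and enlarging the target to absorb it only lengthens the time. The argument closes once $\delta$ is chosen (depending on the model constants) small enough that $C_7\delta\le\tfrac{1}{2}$, after which $p$ and $C'$ are determined.
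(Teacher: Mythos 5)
Your staged descent is sound and would prove the proposition, but it is a genuinely different route from the paper's. The paper does this step in one shot: it dominates the jump chain of the principal part by a \emph{symmetric} simple random walk started at $0$ (as in Proposition \ref{critsmall}), and then asks only for an atypically fast downward excursion --- exit of $[-M/2,M/2]$ with $M/2=\lceil C\sqrt N\rceil$ at the bottom within $aM^2$ steps, which by the Feller estimate inside Lemma \ref{gambler} has probability $\geq p(a)/2>0$ uniformly in $N$ for any fixed $a>0$. Choosing $a$ small makes the elapsed time $aC'\sqrt N$ short enough that the auxiliary contribution $\sup_t|Y_t|\leq C''\sqrt N((\log N)^6/\sqrt N+aC')$ is below $c\sqrt N$, and then $X_t\leq -C\sqrt N$ forces $TI_t\leq c\sqrt N$. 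This is exactly the loophole your closing remark denies: you argue the one-shot passage fails because over the \emph{typical} crossing time the auxiliary part swamps the descent, but the paper never waits the typical time --- it shrinks the time window and pays for it in the success probability, which the statement permits. Your alternative replaces this single rare event by $O((C-c)/\delta)$ stages, each a near-typical gambler's-ruin event of probability bounded below (your $5/8$ should really be the optional-stopping bound $\approx 2\delta/(7\delta/2)=4/7$ minus overshoot and error terms, but any constant works since you only multiply the stage probabilities), with $\delta$ chosen after the exit-time constant so that the per-stage auxiliary contribution $\asymp\delta^2\sqrt N$ is dominated by the stage width $\asymp\delta\sqrt N$. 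Both arguments rest on the same ingredients (Lemmas \ref{proport}, \ref{deltay}, \ref{lemdrift}, \ref{gambler} and the principal/auxiliary split); the paper's version is shorter and defers all the multi-stage bookkeeping to Propositions \ref{critbig} and \ref{critmid}, while yours makes the role of the nonpositive drift and the $\delta^2$ versus $\delta$ balance explicit at the cost of carrying out the level-dependent, size-two-step gambler's ruin that you correctly flag as routine but necessary.
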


\begin{proof}
The proof of the estimate on $|\delta y_t|$ is the same as in Proposition \ref{critbig}.  As in the proof of Proposition \ref{critbig} let $X_t$ and $Y_t$ denote the contributions due to the principal and auxiliary parts, in this case starting from $t=0$.  As observed in the proof of Proposition \ref{critsmall}, the discrete time jump chain for $X_t$ is dominated by a symmetric simple random walk $\tilde{X}_n$ with $\tilde{X}_0=0$.  Setting  $T = \inf\{n:\tilde{X}_n= \pm M/2\}$ and using Lemma \ref{gambler}, for any $a>0$ and large enough (even) $M$, $\mathbb{P}(T \leq aM^2) \geq p(a) >0$ and by symmetry, $\mathbb{P}(T \leq aM^2,\,\,\tilde{X}_T = -M/2) \geq p(a)/2$.  Since $TI_t \geq c\sqrt{N}$ on the region of interest, using Lemma \ref{proport} we may assume $I_t \geq cc'\sqrt{N}$ for some $c'>0$, which implies the rate of transition of $X_t$ is at least $c'\sqrt{N}$ for some possibly smaller $c'>0$.  Setting $M/2 = \lceil C\sqrt{N} \rceil$ and comparing to $\tilde{X}_n$, with probability at least $p(a)/2 - e^{-c'N}$ for some $c'>0$, for some $t \leq aC^2N/(c'\sqrt{N}) = aC'\sqrt{N}$, either $TI_t \leq c\sqrt{N}$ or $X_t \leq -C\sqrt{N}$.\\

Following the proof of Proposition \ref{critbig}, with good probability in $\sqrt{N}$, for $t^* = aC'\sqrt{N}$,
$$\sup_{t \leq t^*}|Y_t| \leq C''\sqrt{N}((\log N)^6/\sqrt{N} + aC')$$ 
which if $a$ is taken small enough, is at most $c\sqrt{N}$ when $N$ is large.  Since $X_t \leq -C\sqrt{N}$ and $|Y_t| \leq c\sqrt{N}$ forces $TI_t \leq c\sqrt{N}$, the proof is complete.
\end{proof}

Now, we start from $TI_0 \leq c\sqrt{N}$ for small $c>0$ and show that $TI_t$ can be brought down to $N^{\gamma}$ for some $\gamma<1/4$ within time $C\sqrt{N}$.

\begin{proposition}\label{critmid}
There are $c,C>0$ so that if $TI_0 \leq c\sqrt{N}$ and $|\delta y_0| \leq 4\log N/\sqrt{N}$ then with probability $\geq 1/2$ there is $t\leq C\sqrt{N}$ so that $TI_t \leq N^{\gamma}$ for some $\gamma<1/4$.
\end{proposition}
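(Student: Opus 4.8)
Fix $\gamma=1/8$ (any $\gamma\in(0,1/4)$ works, and $N^\gamma\ge(\log N)^2$ for $N$ large), let $c$ be a small constant and $c_3=2^{K_0}c$ for a large fixed $K_0$ to be chosen. The plan is to track $TI_t$ on the window $TI_t\in[N^\gamma,c_3\sqrt N]$ by the same dyadic bookkeeping as in Propositions \ref{critbig} and \ref{crittopmid}, but accounting for the fact that below $\sqrt N$ the motion of $TI_t$ is diffusive rather than drift-dominated. First I would dispose of a burn-in: by Lemma \ref{ysquish} and the second and third bullets of Lemma \ref{deltay}, with good probability there is $\tau_1=O((\log N)^4)=o(\sqrt N)$ after which $|\delta y_t|\le c'\log N/\sqrt N$ and $\int_{\tau_1}^{\tau_1+s}|\delta y_u|\,du\le C((\log N)^6+s)/\sqrt N$ for all $s$; on $[0,\tau_1]$ the value of $TI_t$ moves by only $o(\sqrt N)$ (its principal drift is $\le0$ and it makes $O((\log N)^4\,TI_0)$ moves), so we may reset the clock and assume $|\delta y_0|\le1/\sqrt N$. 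On $[N^\gamma,c_3\sqrt N]$ we have $(\log N)^2\le TI_t\le\epsilon N$ for $N$ large, so Lemma \ref{lemdrift} (principal drift $\le0$) and Lemma \ref{proport} ($I_t\ge c\,TI_t$, hence the transition rate of $TI_t$ is of order $TI_t$) are in force. Set $t_0=0$, $t_j=\inf\{t>t_{j-1}:TI_t/TI_{t_{j-1}}\notin[1/2,2]\}$, $L_j=TI_{t_j}$, stopping as soon as $TI_t\le N^\gamma$ or $TI_t\ge c_3\sqrt N$.

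The key per-step estimate is as follows. At scale $L:=L_{j-1}$, the principal jump chain of $TI$ (after discarding the $I+I\to II$ transitions, which are non-increasing, and the null $\to F$ moves) is, by $R_0=1$ exactly as in the proof of Proposition \ref{critsmall} (i.e.\ $\Delta(i,\min(y,y^*))\le\Delta(0,y^*)=0$), stochastically dominated by symmetric simple random walk; reinstating the $II$-transitions ($-2$) only pushes it further down, and the auxiliary transitions contribute at most $r_+\cdot2L\cdot\int_{t_{j-1}}^{t_j}|\delta y|\,du$ moves. Summing the latter bound over all visits to a fixed scale and using that $\int_0^{t}|\delta y|\,du$ stays $O(1)$ on the horizon, the auxiliary contribution in any single step at scale $L$ is $\le L/10$ once $c$ is small enough; hence by Lemma \ref{gambler}, $\mathbb P(L_j\le L/2)\ge p_0$ uniformly in the scale, for a constant $p_0>1/2$ (the relevant comparison being the value $2/3$ for symmetric walk on $[L/2,2L]$ started at $L$). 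Also, since the transition rate is of order $L$ throughout the step, Lemma \ref{gambler} together with Lemma \ref{chern} gives $t_j-t_{j-1}\preceq CL(1+G_j)$ with $G_j$ i.i.d.\ geometric and independent of the path.

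Now I would read off the conclusion from the multiplicative walk $k_j=\log_2L_j$, which has up-probability $\le1-p_0<1/2$ at every step. It reaches every level below $k_0=\log_2 TI_0$ almost surely, so $TI_t\le N^\gamma$ at some finite time $\tau_\gamma$; starting from $k_0\le\log_2(c\sqrt N)+O(1)$, one-sided gambler's ruin bounds the chance of ever reaching $k_3=\log_2(c_3\sqrt N)$ by $((1-p_0)/p_0)^{k_3-k_0}$, which is $\le1/10$ once $K_0$ is a large enough fixed integer (so in particular $c_3\sqrt N\le\epsilon N$, validating the use of Lemmas \ref{lemdrift},\ref{proport}). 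The number of visits $V(k)$ to any level $k$ is dominated by a geometric with parameter $1-2(1-p_0)>0$, so $\mathbb E[V(k)]=O(1)$, with $\mathbb E[V(k)]\le O(1)((1-p_0)/p_0)^{k-k_0}$ for $k>k_0$. On the good event that the walk stays below $k_3$ (a bounded number $K_0$ of extra levels, over which the geometric series is a fixed constant) one gets $\sum_jL_{j-1}=\sum_{k\le k_3}2^kV(k)$ with conditional mean $O(2^{k_0})=O(c\sqrt N)$; since $\tau_\gamma\le\sum_j(t_j-t_{j-1})\preceq C\sum_jL_{j-1}(1+G_j)$ with the $G_j$ independent of the path, $\tau_\gamma$ has conditional mean $O(c\sqrt N)$, and after intersecting all the good events $\tau_\gamma\le C\sqrt N$ with probability $\ge1/2$. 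Crucially the resulting constant $C$ is itself a small multiple of $c$, which is exactly what makes the auxiliary estimate of the previous paragraph consistent: the total number of auxiliary transitions on $[0,C\sqrt N]$ is Poisson with mean $\le r_+c_3\sqrt N\cdot O(C)=O(c^2)\sqrt N$, hence $\le c\sqrt N/2$ whp for $c$ small.

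The main obstacle is precisely this self-consistency. The auxiliary part is negligible at each dyadic scale only if the starting scale $c\sqrt N$, and hence the whole time horizon $C\sqrt N$, is a small absolute constant times $\sqrt N$; meanwhile the summability of $\sum_k2^kV(k)$ needs the multiplicative walk to carry a genuine scale-uniform downward drift, which in turn rests on $R_0=1$ forcing $r_+(\min(y,y^*)-i)\big(\mathbb P(B\to G)-\mathbb P(B\to E)\big)\le1$ (equivalently, the monotonicity of $\Delta(i,y)$ from Lemma \ref{lemdrift}). I would make the loop close by a short stopping-time argument: run the dyadic analysis up to the first time the accumulated auxiliary displacement reaches $c\sqrt N$ or $TI$ reaches $c_3\sqrt N$, deduce the time bound with $C=O(c)$, and then show independently that neither threshold is hit before time $C\sqrt N$ whp. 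The only other point needing care is the ``fresh-start'' version of the integral bound for $\int|\delta y|$ used over each dyadic step, which follows from Lemma \ref{deltay} together with the Markov property and the negligible cost $O((\log N)^6/\sqrt N)$ of re-reaching $|\delta y|\le1/\sqrt N$.
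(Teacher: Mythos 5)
Your proposal is correct and follows essentially the same route as the paper: the same dyadic stopping times $t_j$, the same reduction of the principal part to a symmetric random walk via $R_0=1$ (as in Proposition \ref{critsmall}), the same per-scale control of the auxiliary part through the integral bound of Lemma \ref{deltay} (which is exactly where the hypothesis $TI_0\le c\sqrt N$ with $c$ small enters), and the same biased-walk-on-dyadic-levels bookkeeping with geometrically distributed visit counts summed against $2^k$ to get a total time $O(\sqrt N)$. The ``self-consistency'' you flag as the main obstacle is dissolved in the paper without a stopping-time device: one first bounds the exit time of the principal part alone by $mC\sqrt N L_{j-1}$ via Lemma \ref{gambler}, and only then bounds $\sup|Y_t|$ over that (now deterministic) window, so the per-step auxiliary requirement reduces to $L_{j-1}\le w/(mC)$ rather than to any global statement about the horizon.
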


\begin{proof}
Define $S,t_j,L_j$ and $Y_t$ as in the proof of Proposition \ref{critbig}, and this time define $X_t$ as follows.  Since $I+I\rightarrow II$ transitions do not cause $TI_t$ to increase, they can be ignored.  Break up the rate of $I\rightarrow S$ transitions into two parts $I_t/(1+r_+(\min(y,y^*)-i))$ and $I_tr_+(\min(y,y^*)-i)/(1+r_+(\min(y,y^*)-i))$ then the first part together with the principal part of $S+I\rightarrow SI$ transitions gives expected change $0$ and the remaining part gives change $-1$ which can also be ignored.  What remains is a simple random walk, that we denote $X_t$, moving at rate at least $cI_t$ for some $c>0$.  This is similar to the observation made in Proposition \ref{critsmall}, except that here we have a continuous time walk to work with, which in this case is more convenient.  By Lemma \ref{proport}, we may assume the transition rate is at least $cTI_t$ for some possibly smaller $c>0$.  We have $X_0=Y_0=0$ and $TI_t \leq TI_{t_{j-1}} + X_{t-t_{j-1}} + Y_{t-t_{j-1}}$ for $t\geq t_{j-1}$.\\

Recall that if $|Y_t| \leq w\sqrt{N}L_{j-1}$ and $X_t \leq -(1/2+w)\sqrt{N}L_{j-1}$ then $t\geq t_j-t_{j-1}$.  Using Lemma \ref{gambler} (actually, using the one-sided first passage time discussed in the proof) and dividing by the transition rate, for $N$ large enough, with probability $\geq 1-2^{-m}$ the first time when $X_t$ satisfies the above bound is at most $mCNL_{j-1}^2/(c\min_{t_{j-1}\leq t \leq t_j}TI_t)\leq mC\sqrt{N}L_{j-1}$ for some possibly larger $C>0$.  As before, letting $t^* = mC\sqrt{N}L_{j-1}$, with good probability in $\min(N,mC\sqrt{N}L_{j-1})$, which for $TI\geq N^{\gamma}$ is at least good probability in $N$,
\begin{equation}\label{auxest}
\sup_{t \leq t^*}|Y_t|\leq (\sqrt{N}L_{j-1})C'((\log N)^6 +m\sqrt{N}L_{j-1})/\sqrt{N} \leq mC\sqrt{N}L_{j-1}^2
\end{equation}
for some $C>0$, which is at most $w\sqrt{N}L_{j-1}$ provided $mL_{j-1} \leq (w/C)$.  On this event, and on the event that $X_t$ dips below $-(1/2+w)\sqrt{N}L_{j-1}$ before going above $(1-w)\sqrt{N}L_{j-1}$, and all before time $mC\sqrt{N}L_{j-1}$, it holds that $TI_{t_j} \leq TI_{t_{j-1}}/2$.  The probability this occurs is at least $(1-w)/((1/2+w)+(1-w)) = 2(1-w)/3$, minus $2^{-m}$, minus the error term in ``good probability in $N$''.  Therefore if $w$ is small enough and $m$ is large enough, which can be prescribed, then for $N$ large enough the probability is at least $3/5$, say.  Note the bound on $|Y_t|$ is good so long as $L_{j-1} \leq w/(mC)$, which holds provided we start from $TI_0 \leq c\sqrt{N}$ with $c$ small enough and $L_j \leq L_0$ for all $j$ under consideration.  In this case the values $k_j$ defined by $k_0=0$ and recursively by $k_j = k_{j-1} \pm 1$ according as $L_j \geq 2L_{j-1}$ or $L_j \leq L_{j-1}/2$ are dominated by a random walk with $p_{xx-1}=3/5$ and $p_{xx+1}=2/5$, which we now consider.\\

Letting $K_+ = 1$ and $K_- = \inf\{k:2^{-k} \leq TI_0/N^{\gamma}\}$ and setting $J = \min\{j:k_j \in \{K_-,K_+\}\}$, then letting $\tau = \inf\{t: TI_t \leq N^{\gamma}\}$, we have $\tau \leq \sum_{j=1}^J (t_j-t_{j-1})$.  Using Lemma \ref{gambler} with $M = K_+-K_-$, $b=3/2$ and $x=M-1$,
\begin{equation*}
\mathbb{P}(k_J = K_-) = \frac{1+(3/2)+...+(3/2)^{x-1}}{1+(3/2)+...+(3/2)^x}= \frac{2}{3}\frac{1-(2/3)^x}{1-(2/3)^{x-1}} \geq \frac{2}{3}
\end{equation*}
Letting $U(k) = \sum_{j=0}^{J-1} \mathbf{1}(k_j = k \mid k_0 = K_+-1)$, applying the above calculation at level $k$ and using the Markov property we find $\mathbb{P}(U(k)>d) \leq (2/3)^d$.  Summing, we find
\begin{equation*}
\mathbb{P}(U(K_+-j) \leq jd \textrm{ for }j \in \{1,...,K_+-K_--1\}) \geq 1 - \sum_{m=1}^{\infty}(2/3)^{md} = 1 - \frac{(2/3)^d}{1-(2/3)^d}
\end{equation*}
On this event, and on the event the time spent at each visit to level $K_+-j$ is at most $jdm\sqrt{N}L_{j-1}$,
\begin{equation*}
\tau \leq m\sqrt{N}\sum_{j=1}^{K_+-K_--1}(jd)^22^{-j} \leq mC'd^2 \sqrt{N}
\end{equation*}
for some $C'>0$ not depending on $N$ or $d$.  To estimate the time taken, start from any value of $TI_t \in  [TI_{t_{j-1}}/2,2TI_{t_{j-1}}]$ and let $t^* =t +  m\sqrt{N}L_{j-1}$, then note that in the same way as above, with probability $\geq p>0$ for some $p$, $X_{t^*}-X_t \leq -(3/2+2w)\sqrt{N}L_{j-1}$ and $\sup_{t \leq s \leq t^*}|Y_s-Y_t| \leq w\sqrt{N}L_{j-1}$, which for any value of $TI_t$ in the above range ensures that $t_j\leq t^*$.  Applying this repeatedly and combining with the other estimate, altogether the desired event has probability
\begin{equation*}
1 - \left(\frac{(2/3)^d}{1-(2/3)^d} + \sum_{j=0}^{\log_2 N}(jd)(1-p)^{jd} \right)
\end{equation*}
which tends to $1$ as $N,d\rightarrow\infty$, and in particular, for some $d$ is at least $1/2$ when $N$ is large enough.
\end{proof}
It is now easy to show that when $R_0=1$, the disease dies out by time $C\sqrt{N}$ with positive probability.
\begin{proposition}\label{ub}
There are $C,\gamma>0$ so that from any initial distribution of $(V_0,E_0)$, for $N$ large enough and integer $m$,
\begin{equation*}
\mathbb{P}(V_{mC\sqrt{N}}=0) \geq 1-e^{-\gamma m}
\end{equation*}
\end{proposition}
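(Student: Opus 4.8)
The plan is to bolt Propositions \ref{critbig}, \ref{crittopmid}, \ref{critmid} and \ref{critsmall} together into a single \emph{attempt} which, started from an arbitrary configuration, drives the infection to extinction within time $O(\sqrt N)$ with probability bounded away from $0$ uniformly in $N$; then to iterate the attempt using the (strong) Markov property to get the geometric bound. Since $\{V_t=0\}$ is absorbing, once an attempt succeeds there is nothing more to do.

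First I would make one attempt precise. Starting from any $(V_0,E_0)$, Lemma \ref{start} provides a fixed $T=T(\epsilon)$ after which, with high probability in $N$, $|V_t|\le\epsilon N$ and $|\delta y_t|\le\epsilon$; by Remark \ref{timehorizon} this event has probability $1-o(1)$ and can be treated as certain for our purposes. From that point the propositions apply in order: Proposition \ref{critbig} brings the process, with probability $\ge 1/2$, to $TI_t\le C\sqrt N$ and $|\delta y_t|\le 2\log N/\sqrt N$ within time $C\sqrt N$; Proposition \ref{crittopmid}, used with its $c$ chosen no larger than the constant $c$ appearing in Proposition \ref{critmid} and its $C$ equal to the one just produced, brings it with probability $\ge p$ to $TI_t\le c\sqrt N$, $|\delta y_t|\le 4\log N/\sqrt N$ within time $C'\sqrt N$; Proposition \ref{critmid} brings it with probability $\ge 1/2$ to $TI_t\le N^{\gamma_1}$ for some $\gamma_1<1/4$ within time $C\sqrt N$; and finally Proposition \ref{critsmall} sends $|V_t|$ to $0$ within time $CN^{2\gamma_1}=o(\sqrt N)$ with probability $\ge p$. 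One small dovetailing point: the $\delta y$ bound emerging from Proposition \ref{critmid} (via Lemma \ref{deltay}) is only $O(\log N/\sqrt N)$ rather than $\le\log N/\sqrt N$, so before the last step one re-invokes Lemma \ref{ysquish} to restore $|\delta y_t|\le\log N/\sqrt N$ at a cost of $O(\log N)=o(\sqrt N)$ time with good probability in $N$; over that short window a routine random-walk estimate keeps $TI_t$ at $O(N^{\gamma_1})$, so Proposition \ref{critsmall} still applies (alternatively one checks directly that the proof of Proposition \ref{critsmall} uses only the third bullet of Lemma \ref{deltay}, valid for $|\delta y_0|\le C\log N/\sqrt N$ with any constant $C$, and skips this step). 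Chaining these through the strong Markov property — each ``good probability'' or ``high probability'' sub-step contributing a factor $1-o(1)$ and the four propositions contributing their explicit constants — one attempt, of total duration at most $C_0\sqrt N$ for some $C_0>0$ and all $N$ large, leaves $V_t=0$ with probability at least some fixed $p_0>0$, uniformly in $N$ large and in the starting configuration.

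Finally I would iterate. Because $p_0$ is a lower bound for the success probability of an attempt started from \emph{any} configuration — this is precisely why each attempt begins with an application of Lemma \ref{start} — the strong Markov property gives, for every integer $m\ge 1$ and $N$ large enough,
\begin{equation*}
\mathbb{P}(V_{mC_0\sqrt N}\neq 0)\le(1-p_0)^m,
\end{equation*}
so setting $\gamma=-\log(1-p_0)>0$ and $C=C_0$ proves the claim. The substance of the argument lies in Propositions \ref{critbig}--\ref{critsmall}, already established; what remains here is bookkeeping. The only genuine subtlety — and the step I expect to require the most care — is arranging the hypotheses to dovetail, in particular keeping the $\delta y$ control from degrading below the threshold Proposition \ref{critsmall} needs, and making sure that routing every attempt through Lemma \ref{start} really does make the per-attempt success probability uniform over all possible intermediate configurations, so that the geometric bound above holds without a separate union bound over rare ``bad restart'' events.
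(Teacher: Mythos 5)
Your proposal is correct and follows essentially the same route as the paper: define the successive stopping times at which the conclusions of Propositions \ref{critbig}, \ref{crittopmid}, \ref{critmid} and \ref{critsmall} are achieved, chain them with the Markov property to get a uniform per-attempt success probability $p_0>0$ within time $C_0\sqrt N$, and iterate to obtain the geometric bound with $e^{-\gamma}=1-p_0$. Your extra care about the mismatch between the $|\delta y|\le 4\log N/\sqrt N$ output of Proposition \ref{critmid} and the $|\delta y_0|\le\log N/\sqrt N$ hypothesis of Proposition \ref{critsmall} is a legitimate dovetailing point that the paper's proof passes over silently, and your proposed fixes are both sound.
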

\begin{proof}
Let
\begin{itemize}
\item $t_1 = \inf\{t:TI_t \leq C\sqrt{N},\,\,|\delta y_t| \leq \log N/\sqrt{N}\}$,
\item $t_2 = \inf\{t>t_1:TI_t \leq c\sqrt{N},\,\,|\delta y_t| \leq 2 \log N/\sqrt{N}\}$,
\item $t_3 = \inf\{t>t_2: TI_t \leq N^{\gamma},\,\,|\delta y_t| \leq 4 \log N\sqrt{N}\}$ and
\item $t_4 = \inf\{t>t_3: |V_t|=0\}$.
\end{itemize}
Apply Propositions \ref{critbig}, \ref{crittopmid}, \ref{critmid}, and \ref{critsmall} in that order, and use the Markov property at each step, to deduce that $t_4 \leq C'\sqrt{N}$ with probability $\geq p>0$ for some $p,C'$ uniformly in $N$.  To get the above statement, let $1-p = e^{-\gamma}$ and apply the Markov property repeatedly.
\end{proof}
We conclude with a matching lower bound that works when $|V_0| \geq \sqrt{N}$ and $\delta y_0\leq -\log N/\sqrt{N}$.
\begin{proposition}\label{lb}
If $|V_0| \leq \sqrt{N}$ and $\delta y_0 \geq -\log N/\sqrt{N}$ there is $c>0$ so that with good probability in $N$,
\begin{equation*}
V_{c\sqrt{N}} \neq 0
\end{equation*}
\end{proposition}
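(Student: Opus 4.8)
Throughout I would work with the modified variable $TI_t=I_t+SIA_t+2IIA_t$ from the start of Section \ref{secext}. Since $SIA_t>0$ or $IIA_t>0$ means the corresponding partnership currently contains an infectious vertex, $TI_t>0$ implies $|V_t|\ne 0$; so it suffices to keep $TI_t$ positive up to time $c\sqrt N$ with good probability in $N$.

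The first step is to put the process in a standard position. By Lemma \ref{start} and Remark \ref{timehorizon} we may assume $|V_t|\le\epsilon N$ and $|\delta y_t|\le\epsilon$ for all $t$, with $\epsilon$ as small as we please. By Lemma \ref{ysquish} and the first two bullets of Lemma \ref{deltay} (the second in its one-sided form, since only a lower bound on $\delta y_0$ is assumed) we may also assume, with good probability, that $\delta y_t\ge -C\log N/\sqrt N$ for $t\le c\sqrt N$ and that $\int_0^{c\sqrt N}|\delta y_t|\,dt$ is bounded by an arbitrarily small constant; note a large positive value of $\delta y_t$ only helps, so no matching upper bound is needed. Next I would show $TI_0\ge c\sqrt N$ with good probability: a single infectious contributes $1$ to $TI_0$, and an infectious vertex inside a partnership contributes at least $1$ unless that partnership ends up of type $SSA$ (which requires the partner to recover before breakup and without transmitting --- an event of probability bounded away from $1$); distinct partnerships present at time $0$ evolve autonomously, so $|V_0|\ge\sqrt N$ together with Lemma \ref{chern}/\ref{largedev} gives $TI_0\ge c\sqrt N$ with probability $\ge 1-e^{-c'\sqrt N}$.

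The core is to confine $TI_t$ away from $0$ on $[0,c\sqrt N]$. By Lemma \ref{proport} we may take $I_t\ge cTI_t$ while $TI_t\ge(\log N)^2$, so the jump rate of $TI_t$ is $\Theta(TI_t)$ there; and Lemma \ref{lemdrift}, together with the matching lower bound $q_{xx+1}-q_{xx-1}-2q_{xx-2}\ge -C(i+|\delta y|)(q_{xx+1}+q_{xx-1}+q_{xx-2})$ (same computation, using $R_0(y)-1\ge -C|\delta y|$ near $y^*$ and boundedness of $\partial_i\Delta$), shows the per-jump drift of $TI_t$ has magnitude $O(TI_t/N+|\delta y_t|)$. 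First I would use the first statement of Lemma \ref{hitbound} (with $\lambda=1+\epsilon/\sqrt N$) to bound the chance that $TI_t$ ever rises much above $\sqrt N$ before returning below it, so that on the relevant event $TI_s$ stays bounded by a controlled multiple of $\sqrt N$ and hence the instantaneous drift of $TI_s$ is $O(1)$ and the jump rate $O(\sqrt N)$. Writing $TI_t=TI_0+A_t+M_t$ with $A_t$ the compensator and $M_t$ a martingale with jumps at most $2$, the drift bound gives $|A_t|\le C'c\sqrt N\le\tfrac14 TI_0$ once $c$ and $\epsilon$ are small, and $\langle M\rangle_t\le CcN$; since $TI_t=0$ forces $|M_t|\ge TI_0-|A_t|\ge\tfrac14 TI_0\ge\tfrac14 c\sqrt N$, a martingale maximal inequality (or Lemma \ref{gambler} applied to the embedded jump chain) bounds the probability of extinction by time $c\sqrt N$.

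The crux is the sharpness of this last estimate. Over a time of order $\sqrt N$ the variable $TI_t$ makes $\Theta(N)$ jumps, and both its accumulated drift and its diffusive fluctuations are then of order $\sqrt N$ --- the same order as $TI_0$ itself; this is precisely the knife-edge balance between the $-i^2$ drift and the neutral noise described in the introduction. In particular, for $c$ a fixed constant the maximal inequality only gives $\mathbb{P}(\sup_{t\le c\sqrt N}|M_t|\ge\tfrac14 TI_0)\le e^{-\Theta(1/c)}$, which is bounded away from $0$ but not small; to reach the stated conclusion (probability $\ge 1-e^{-c(\log N)^2}$) one must run the confinement over a time window that is a small --- possibly mildly $N$-dependent --- fraction of $\sqrt N$, and the delicate point is the bookkeeping needed to keep the drift, the martingale fluctuation and the size of $TI_t$ simultaneously under control over that window.
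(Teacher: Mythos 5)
Your overall strategy is the same as the paper's: reduce to the anticipated variable $TI_t$, convert $|V_0|\ge\sqrt N$ into $TI_0\ge c\sqrt N$ via the independence of the final states of the initial partnerships, split the transitions into a part whose per-jump drift is $\ge -Ci$ and an auxiliary part controlled by $\int|\delta y_s|\,ds$, and then argue that neither part can move $TI$ by $TI_0/3$ before time $c\sqrt N$. (The paper implements your ``matching lower bound on the drift'' by redefining the principal part with $\max(y,y^*)$ in place of $\min(y,y^*)$ and putting the $S+I\to SI$ events at rate $r_+I\max(-\delta y,0)$ into the auxiliary part, so that $TI_t=TI_0+X_t-Y_t$ with $X$ having per-jump mean at least $\Delta(i,y^*)\ge -Ci\ge -C/\sqrt N$; this is your drift bound in a slightly tidier form.)

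The difficulty you raise in your last paragraph is genuine, and you should know that the paper's own proof does not resolve it. Before $\tau$ the jump rate of $X$ is $\Theta(\sqrt N)$, so by time $c\sqrt N$ there are $\Theta(cN)$ jumps; the compensator is $O(c\sqrt N)$ as you say, but the martingale part has quadratic variation $\Theta(cN)$, so $\mathbb{P}\bigl(\inf_{t\le c\sqrt N}M_t\le -\sqrt N/6\bigr)$ is of order $e^{-\Theta(1/c)}$ --- bounded away from $0$ uniformly in $N$ --- and this is a lower bound as well as an upper bound, since a near-critical birth--death chain started at $\sqrt N$ genuinely hits $0$ within time $c\sqrt N$ with probability at least $e^{-C/c}$. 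The paper's corresponding sentence (``using a large deviations argument \dots with high probability in $\sqrt N$ the decrease due to the principal part up to time $c\sqrt N$ is at most $cC\sqrt N$'') asserts a tail of size $e^{-\gamma\sqrt N}$ where Azuma--Hoeffding over $\Theta(cN)$ bounded jumps gives only $e^{-\Theta(cC^2)}$. So with a fixed constant $c$ what this argument actually yields is survival with probability $\ge 1-e^{-\gamma/c}$ (bounded away from $0$, tending to $1$ as $c\to 0^+$), or alternatively survival to time $\sqrt N/(\log N)^2$ with good probability; the statement as written, with fixed $c$ and error $e^{-\gamma(\log N)^2}$, does not follow from this line of reasoning. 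Your instinct to shrink the time window by a polylogarithmic factor is the right repair; do not expect to close the gap by imitating the paper more closely.
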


\begin{proof}
If $|V_0| \geq f(N)$ then $\max(I_0,IP_0) \geq f(N)/3$.  If $I_0 \geq f(N)/3$ then $TI_0 \geq f(N)/3$, and if $IP_0 \geq f(N)/3$ then since an infectious partnership has a positive probability of breaking up before both individuals are healthy, and since simultaneous partnerships' final states are independent, for some $c>0$, with high probability in $f(N)$, $TI_0 \geq f(N)/c$.  In this case take $f(N)=\sqrt{N}$, and let $\tau = \inf\{t:TI_t \leq TI_0/2\}$.\\

To get a bound in the other direction we redefine the principal and auxiliary parts as follows.  The principal part is as before except with $\max(y,y^*)$ instead of $\min(y,y^*)$.  The auxiliary part consists of $S+I\rightarrow SI$ type events at rate $\max(-\delta y,0)$.  In this case, the auxiliary part is a component of the principal part, so letting $X_t$ denote the effect of the principal part and $Y_t$ the effect of the auxiliary part to time $t$, $TI_t = TI_0 + X_t - Y_t$.\\

Break up the $\max(y,y^*)$ in the principal part into $y^*$ and $\max(y-y^*,0)$, then the transition probabilities corresponding to the $y^*$ part are fixed and have expected value $\Delta(i,y^*)$ which is at least $-ci$ for some $c>0$ and thus $\geq -c/\sqrt{N}$ for some $c>0$ for $t < \tau$, and the transition probabilities corresponding to the $\max(y-y^*,0)$ part are fixed and have expected value $\geq 0$.  Since the rate of transitions is $O(\sqrt{N})$, the average rate of decrease of $TI$ is at most $O(1)$, so using a large deviations argument on the $y^*$ part, and one on the $\max(y-y^*,0)$ part, shows that with high probability in $\sqrt{N}$ the decrease due to the principal part up to time $c\sqrt{N}$ is at most $cC\sqrt{N}$, for some $C>0$.\\

It is easy to check, as in the proof of Proposition \ref{critmid}, that for $t^*=c\sqrt{N}$, with good probability in $\sqrt{N}$, $\sup_{t \leq t^*}|Y_t|$ is at most $cC\sqrt{N}$ for some $C>0$.  By taking $c$ small enough, with good probability in $\sqrt{N}$, $|X_t|+|Y_t| \leq TI_0/3$ for $t\leq c\sqrt{N}$ and so $\tau \geq c\sqrt{N}$.
\end{proof}

\section*{Acknowledgements}
The author wishes to thank Chris Hoffman for the suggestion to study the model on the complete graph.  The author's research is partly supported by an NSERC PGSD2 Graduate Scholarship.

\bibliography{twoStage}
\bibliographystyle{plain}
\end{document}